\numberwithin{equation}{section}
\newtheorem{thm}{Theorem}[section]
\newtheorem{lem}[thm]{Lemma}
\newtheorem{cor}[thm]{Corollary}
\newtheorem{prop}[thm]{Proposition}
\newtheorem{propapp}{Proposition}
\newtheorem{lemapp}[propapp]{Lemma}
\theoremstyle{definition}
\theoremstyle{remark}
\newtheorem{rem}[thm]{Remark}
\newtheorem{ex}[thm]{Example}
\newcommand{\tref}[1]{Theorem~\ref{#1}}
\newcommand{\cref}[1]{Corollary~\ref{#1}}
\newcommand{\pref}[1]{Proposition~\ref{#1}}
\newcommand{\lref}[1]{Lemma~\ref{#1}}
\newcommand{\diam}{\mathrm{diam}}
\newcommand{\rad}{\mathrm{rad}}
\newcommand{\id}{\mathrm{Id}}
\newcommand{\RR}{\mathbf{R}}
\newcommand{\centra}{\mathscr{Z}}
\newcommand{\Isom}{\mathrm{Is}}
\newcommand{\la}{\langle}
\newcommand{\ra}{\rangle}
\newcommand{\cat}{{\upshape CAT(0)}\xspace }
\newcommand{\catone}{{\upshape CAT(1)}\xspace }
\newcommand{\bd}{\partial}
\newcommand{\bdfine}{\partial_{\mathrm{fine}}}
\newcommand{\vareps}{\varepsilon}
\renewcommand{\theenumi}{\roman{enumi}}
\begin{document}

\title{At infinity of finite-dimensional \cat spaces}
\author[Pierre-Emmanuel Caprace]{Pierre-Emmanuel Caprace*}
\address{UCL -- Math, Chemin du Cyclotron 2, 1348 Louvain-la-Neuve, Belgium}
\email{pe.caprace@uclouvain.be}
\thanks{*F.N.R.S. Research Associate}

\author{Alexander Lytchak$^\dagger$}
\address{Mathematisches Institut, Universit\"at  Bonn, Beringstrasse 1, D-53115 Bonn}
\email{lytchak@math.uni-bonn.de}
\thanks{$^\dagger$Supported in part by SFB 611 and MPI f\"ur Mathematik in Bonn}

\date{October 9, 2008}

\begin{abstract}
We show that any filtering family of closed convex subsets of a finite-dimensional \cat space $X$ has a
non-empty intersection in the visual bordification $ \overline{X} = X \cup \bd X$. Using this fact, several
results known for proper \cat spaces may be extended to finite-dimensional spaces, including the existence of
canonical fixed points at infinity for parabolic isometries, algebraic and geometric restrictions on amenable
group actions, and geometric superrigidity for non-elementary actions of irreducible uniform lattices in
products of locally compact groups.
\end{abstract}

\subjclass{53C20, 20F65}

\maketitle

\section{Introduction}


Several families of finite-dimensional \cat spaces naturally include specimens which are not locally compact;
\emph{e.g.} buildings of finite rank (Euclidean or not), finite-dimensional \cat cube complexes, or asymptotic
cones of Hadamard manifolds or of \cat groups.

A major difficulty one encounters when dealing with non-proper spaces is that the visual boundary may have a
very pathological behaviour. For example, an {unbounded} \cat space may well have an \emph{empty} visual
boundary. The purpose of this paper is to show that for finite-dimensional spaces, the visual boundary
nevertheless enjoys similarly nice properties as in the case of proper spaces.

Following B.~Kleiner \cite{Kleiner}, we define the \textbf{(geometric) dimension} of a \cat space $X$ to be the
supremum over all compact subsets $K \subset X$ of the topological dimension of $K$. We refer to
\emph{loc.~cit.} for more details and several characterizations of this notion. A $0$-dimensional \cat space is
reduced to a singleton, while $1$-dimensional \cat spaces coincide with $\RR$-trees. We emphasize that the
notion of geometric dimension is \emph{local}. It turns out that, for our purposes, it will be sufficient to
demand that the spaces have finite dimension \emph{at large scale}. In order to define this condition precisely,
we shall say that a \cat space $X$ has \textbf{telescopic dimension} $\leq n$ if every asymptotic cone
$\lim_\omega (\vareps_n X, \star_n)$ has geometric dimension~$\leq n$. A space has telescopic dimension~$0$ if
and only if it is bounded. It has telescopic dimension~$\leq 1$ if and only if it is Gromov hyperbolic. A \cat
space of finite geometric dimension has finite telescopic dimension. We refer to \S\ref{sec:dimension} below for
more details and some examples.

\begin{thm}\label{filtering}
Let $X$ be a complete \cat space of finite telescopic dimension and $\{X_\alpha\}_{\alpha \in A}$ be a filtering
family of closed convex subspaces. Then either the intersection $\bigcap_{\alpha \in A} X_{\alpha}$ is
non-empty, or the intersection of the visual boundaries $\bigcap_{\alpha \in A} \bd X_{\alpha}$ is a non-empty
subset of $\bd X$ of intrinsic radius at most $\pi /2$.
\end{thm}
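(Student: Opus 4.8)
The first alternative is the easy one. Fix a basepoint $x_0\in X$, put $\rho_\alpha=d(x_0,X_\alpha)$, and let $p_\alpha$ denote the nearest‑point projection of $x_0$ on the closed convex set $X_\alpha$. Since $\{X_\alpha\}$ is filtering, $\alpha\mapsto\rho_\alpha$ is monotone along the filter, say $\rho_\alpha\to\rho_\infty\in[0,\infty]$, and the basic \cat inequality for nearest‑point projections gives $d(x_0,p_\beta)^2\ge d(x_0,p_\alpha)^2+d(p_\alpha,p_\beta)^2$, hence $d(p_\alpha,p_\beta)^2\le\rho_\beta^2-\rho_\alpha^2$, whenever $X_\beta\subseteq X_\alpha$ (using $p_\beta\in X_\alpha$). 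Thus if $\rho_\infty<\infty$ the net $(p_\alpha)$ is Cauchy and its limit lies in every $X_\alpha$, so $\bigcap_\alpha X_\alpha\neq\emptyset$. Hence we may assume $\rho_\alpha\to\infty$; note this already forces each $X_\alpha$, for $\alpha$ far enough in the filter, to be unbounded, since a filtering family of \emph{bounded} closed convex sets has non‑empty intersection by the same projection argument.

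The main point is to show that $Z:=\bigcap_\alpha\bd X_\alpha$ is non‑empty, and this is where finite telescopic dimension is indispensable: an unbounded \cat space may carry no geodesic ray at all, so the existence of even a single point of $\bd X$ lying in the whole family must be produced by hand. The plan is to pass to an asymptotic cone. Extract from the filter a decreasing sequence $X_{\alpha_1}\supseteq X_{\alpha_2}\supseteq\cdots$ with $\rho_k:=\rho_{\alpha_k}\to\infty$, arranging (after a minor reduction concerning the cofinality of $A$) that it is cofinal, and form $X_\omega=\lim_\omega(\tfrac1{\rho_k}X,x_0)$; by hypothesis $X_\omega$ has finite geometric dimension. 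The rescaled sets $\tfrac1{\rho_k}X_{\alpha_k}$ converge to a closed convex subset $C\subseteq X_\omega$ with $d(o,C)=1$ and $\pi_C(o)=[p_k]_\omega$; moreover $C$ is \emph{unbounded}, since each $X_{\alpha_k}$, being unbounded, convex and containing $p_k$, contains (by convexity) points at every distance $\ge\rho_k$ from $x_0$, and rescaling these produces points of $C$ arbitrarily far from $o$. Using the finite geometric dimension of $X_\omega$ one finds a geodesic ray inside $C$, hence a point $\bar\xi\in\bd C\subseteq\bd X_\omega$; transferring this direction back to $X$ along the points $p_k$ produces a genuine geodesic ray $\gamma$ of $X$ eventually lying in $X_{\alpha_k}$ for every $k$, so that $\gamma(\infty)\in\bigcap_k\bd X_{\alpha_k}=Z$. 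I expect this transfer to be the real obstacle: the segments $[x_0,p_k]$ need not converge in direction in a non‑proper space, and one must exploit the finite‑dimensionality of the cone to rule out the resulting ``spiralling'' and extract an honest ray of $X$.

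It remains to bound the intrinsic (Tits) radius of $Z$ by $\pi/2$. The set $Z$ is closed and, being an intersection of the $\pi$‑convex subsets $\bd X_\alpha$, is itself $\pi$‑convex; moreover $\bd X$ with its Tits metric is a complete \catone space of finite geometric dimension because $X$ has finite telescopic dimension. I would induct on the telescopic dimension $n$ of $X$, the case $n=0$ being covered above. If $\rad(Z)<\pi/2$ then $Z$ has a circumcenter in $Z$ and we are done. Otherwise, a structure result for convex subsets of finite‑dimensional \catone spaces yields a pair $\xi,\xi'\in Z$ of Tits‑antipodes; from this purely asymptotic data one produces a geodesic line of $X$, whose parallel set $\RR\times X'$ meets every $X_\alpha$ in a set of the form $\RR\times X'_\alpha$, the family $\{X'_\alpha\}$ being again filtering with empty intersection in the space $X'$ of strictly smaller telescopic dimension. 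The inductive hypothesis then yields a spherical‑join description of $Z$ as the join of a round sphere with a set of radius $\le\pi/2$, whence $\rad(Z)\le\pi/2$. The delicate steps here are the passage from Tits‑antipodes in $Z$ to an actual line in $X$, and the bookkeeping of the join decomposition along the induction; that $\pi/2$ cannot be improved is already visible for the family of translates of a closed half‑plane in $\RR^2$.
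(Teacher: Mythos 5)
Your proposal has a genuine gap at exactly the point you flag yourself: the passage from a ray in the asymptotic cone back to an honest ray of $X$ lying (asymptotically) in every $X_{\alpha_k}$. In a non-proper space the directions of the segments $[x_0,p_k]$ need not subconverge, and a point at infinity of $\lim_\omega(\tfrac1{\rho_k}X,x_0)$ does not in general come from a geodesic ray of $X$; no amount of finite-dimensionality of the cone by itself rules out the ``spiralling'' you mention, and you give no mechanism for doing so. This is precisely the step the paper handles by a completely different device: the normalized distance functions $f_i=d_{X_i}-d_{X_i}(o)$ live in the compact convex set $\mathcal{C}_0$ of $1$-Lipschitz convex functions, so they sub-converge to a convex function $f$; the Jung-type Theorem~\ref{raddiam} (this is where telescopic dimension enters, quantitatively) gives the uniform lower bound $|\nabla_p(-f)|\geq\tfrac12\bigl(1-\sqrt{\tfrac{n}{n+1}}\bigr)$ of Lemma~\ref{lem:gradient}; and the Karlsson--Margulis theorem applied to the semi-contracting gradient flow of $f$ (Proposition~\ref{karlsson}) produces the boundary point, which lies in every $\bd X_i$ because the flow preserves each $X_i$; the $\pi/2$ radius bound is Eberlein's argument (Lemma~\ref{eberle}). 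Your cone picture motivates why such a point should exist, but it does not prove it.

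Two further problems. First, your ``minor reduction concerning the cofinality of $A$'' is not available: a filtering family need not admit a countable cofinal subfamily, so you cannot reduce the whole problem to one nested sequence. The paper's fix is structural: it proves the nested-sequence case (Lemma~\ref{lem:nested}), applies it to $(X_\alpha\cap X_{\alpha_n})_n$ for \emph{each} $\alpha$ to get boundary sets $Y_\alpha$ of radius $\leq\pi/2$, and then invokes a separate CAT(1) filtering intersection result (Proposition~\ref{prop:cat1:filtering}, proved by induction on the dimension of the Tits boundary, which is finite by Proposition~\ref{dimension}) to intersect the $Y_\alpha$. Second, in your radius argument the step from a pair of Tits-antipodes in $Z$ to an actual geodesic line of $X$ is unjustified without properness: two boundary points at Tits distance $\pi$ need not be the endpoints of any line in a non-proper CAT(0) space, so the parallel-set induction as you set it up does not get started. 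The induction-on-dimension idea is sound in spirit, but the paper runs it inside the CAT(1) boundary (using circumcentres, the Balser--Lytchak centre results, and a spherical-suspension dimension argument), not via lines in $X$.
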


Recall that a family $\mathcal{F}$ of subsets of a given set is called \textbf{filtering} if for all $E, F \in
\mathcal{F}$ there exists $D \in \mathcal{F}$ such that $D \subseteq E \cap F$. In particular the preceding
applies to nested families of closed convex subsets, and provides a criterion ensuring that the visual boundary
$\bd X$ is non-empty. In the course of the proof, we shall establish a result similar to Theorem~\ref{filtering}
for finite-dimensional $\mathrm{CAT}(1)$ spaces (see \pref{prop:cat1:filtering} below). We remark however that
Theorem~\ref{filtering} fails for complete \cat spaces with finite-dimensional Tits boundary, see
Example~\ref{ex:Hilbert} below.

\begin{rem}
Theorem~\ref{filtering} may be reformulated using the topology $\mathscr{T}_c$ introduced by Nicolas
Monod~\cite[\S3.7]{Monod_superrigid} on the set $\overline{X} = X \cup \bd X$. It is defined as the coarsest
topology such that for any convex subset $Y \subseteq X$, the (usual) closure $\overline Y$ in $\overline X$ is
$\mathscr{T}_c$-closed. It is known that any bounded closed subset of $X$ is $\mathscr{T}_c$-quasi-compact (see
\cite[Theorem~14]{Monod_superrigid}) and that, if $X$ is Gromov hyperbolic, then $\overline X$ is
$\mathscr{T}_c$-quasi-compact (see Proposition~23 in \emph{loc.~cit.}). However, if $X$ is infinite-dimensional
then $\overline X$ is generally not $\mathscr{T}_c$-quasi-compact. Theorem~\ref{filtering} just means that,
\emph{given a  complete \cat space of finite telescopic dimension, the set  $\overline X$ is quasi-compact for
the topology $\mathscr{T}_c$}. This compactness property is thus shared by proper \cat spaces, Gromov hyperbolic
\cat spaces and finite-dimensional \cat spaces.
\end{rem}

A key idea in the proof of Theorem~\ref{filtering} is to obtain
points at infinity by applying (a very special case of) a result of
A.~Karlsson and G.~Margulis \cite{KarlssonMargulis} to the gradient
flow of a convex function that is associated in a canonical way to
the given filtering family. This strategy requires to show that the
velocity of escape of the gradient flow in question is strictly
positive. This is where the assumption on the telescopic dimension
of the ambient space is used; the main point in estimating that
velocity is the following natural generalisation to non-positively
curved spaces of H.~Jung's classical theorem~\cite{Jung}. Another
closely related generalisation was established in
\cite{LangSchroeder}.

\begin{thm} \label{raddiam}
Let $X$ be a  \cat space and $n$ be a positive integer.

Then $X$ has geometric dimension $\leq n $ if and only if for each subset $Y$ of $X$ of finite diameter we have
$\displaystyle \rad_X (Y) \leq \sqrt {\frac {n} {2(n+1)}} \diam(Y)$.

Similarly $X$ has telescopic dimension $\leq n$ if and only if for any $\delta >0$ there exists some constant $D
> 0$ such that for any bounded subset $Y \subset X$ of diameter~$>D$, we have $\displaystyle \rad_X (Y) \leq
\bigg(\delta +\sqrt {\frac {n} {2(n+1)}} \bigg) \diam(Y)$.
\end{thm}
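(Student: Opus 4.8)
The plan is to read Theorem~\ref{raddiam} as a non-positively curved incarnation of Jung's theorem, and to prove the two ``only if'' implications together with a spherical counterpart, by induction on the dimension; the two ``if'' implications and the passage to the telescopic regime then follow with less effort. Throughout I write $c_n=\sqrt{n/(2(n+1))}$ and record the identity $2\bigl(1-\cos\arccos(-1/n)\bigr)=c_n^{-2}$, which is the source of the constant.

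The core is the following assertion, call it $(\mathrm P_n)$: \emph{if $Z$ is a complete \catone space of geometric dimension $\le n-1$ and $D\subseteq Z$ satisfies $\rad_Z(D)\ge\pi/2$, then $\diam D\ge\arccos(-1/n)$.} Granting $(\mathrm P_n)$, the geometric ``only if'' goes as follows. Let $Y\subseteq X$ be bounded, let $c$ be its unique circumcenter and $r=\rad_X(Y)$. A first-variation argument shows that for each $\varepsilon>0$ the set $D_\varepsilon$ of directions $\uparrow_c^{\,y}$ with $d(c,y)>r-\varepsilon$, viewed inside the completed space of directions $\overline{\Sigma_c X}$, has $\rad(D_\varepsilon)\ge\pi/2$: otherwise a direction making angle $\le\pi/2-\eta$ with all of them would, by the first variation of the distance, allow one to decrease $\max_{y\in Y}d(\cdot,y)$ below $r$, contradicting that $c$ is the circumcenter. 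Since $\overline{\Sigma_c X}$ is again \catone and of dimension $\le n-1$ (completion of a space of directions does not raise the geometric dimension, and $\dim\Sigma_c X\le\dim X-1$ by Kleiner), $(\mathrm P_n)$ produces $y,y'\in Y$ with $d(c,y),d(c,y')$ close to $r$ and $\angle_c(y,y')$ close to $\arccos(-1/n)$; the \cat angle comparison $d(y,y')^2\ge d(c,y)^2+d(c,y')^2-2d(c,y)d(c,y')\cos\angle_c(y,y')$ then gives, in the limit, $\diam(Y)^2\ge 2r^2(1+1/n)=r^2/c_n^2$, as wanted.

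It remains to prove $(\mathrm P_n)$ by induction on $n$. For $n=1$ a complete \catone space of dimension $0$ contains no two distinct points at distance $<\pi$ (a connecting geodesic would be a non-trivial connected subset), so any $D$ with $\rad_Z(D)\ge\pi/2$ has at least two points, whence $\diam D\ge\pi=\arccos(-1)$. For the step from $(\mathrm P_n)$ to $(\mathrm P_{n+1})$ one first reduces to finite $D$ (using that $\rad_Z(D)\ge\pi/2$ forces $\sup\{\rad_Z(F):F\subseteq D\text{ finite}\}\ge\pi/2$, a Helly-type consequence of the convexity of balls of radius $<\pi/2$ in complete \catone spaces). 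If $\rad_Z(D)\ge\arccos(-1/(n+1))$ we are done, since $\diam D\ge\rad_Z(D)$; otherwise $\pi/2\le\rad_Z(D)<\pi$, a circumcenter $z$ exists, and the first-variation argument again shows that the directions from $z$ to the points of $D$ at maximal distance form a subset of radius $\ge\pi/2$ in $\overline{\Sigma_z Z}$, which has dimension $\le n-1$. By $(\mathrm P_n)$ two of these points subtend an angle $\ge\arccos(-1/n)-\eta$ at $z$, and the spherical law of cosines (applicable since $\rad_Z(D)<\pi$) together with the elementary bound $\cos^2\rho\le 1/(n+1)^2$, valid for $\rho\in[\pi/2,\arccos(-1/(n+1))]$, forces their distance to be at least $\arccos(-1/(n+1))$; hence $\diam D\ge\arccos(-1/(n+1))$.

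For the telescopic ``only if'' suppose it fails for some $\delta>0$: there are bounded $Y_k$ with $\diam(Y_k)\to\infty$ and $\rad_X(Y_k)>(\delta+c_n)\diam(Y_k)$. Rescaling $X$ by $1/\diam(Y_k)$ with basepoints at the circumcenters of the $Y_k$ and passing to an ultralimit yields, in an asymptotic cone of $X$ (a complete \cat space of dimension $\le n$ by hypothesis), a bounded set of diameter $1$ and radius $\ge\delta+c_n>c_n$, contradicting the geometric inequality just proved. The telescopic ``if'' is the reverse bookkeeping: a finite configuration in some asymptotic cone violating the geometric inequality (which exists as soon as that cone has dimension $\ge n+1$) lifts to configurations of arbitrarily large diameter in $X$ violating the telescopic estimate. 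Finally, the geometric ``if'' is the easier direction: if $\dim X\ge n+1$ then, by Kleiner's analysis of geometric dimension, $X$ contains subsets arbitrarily close to the vertex set of a regular Euclidean $(n+1)$-simplex, for which $\rad/\diam$ equals $c_{n+1}>c_n$, so the inequality fails. The main obstacle I anticipate is the inductive proof of $(\mathrm P_n)$: making the reduction to finite subsets and the existence and use of circumcenters rigorous when the radius lies in $[\pi/2,\pi)$, treating spaces of directions in their completions without raising the dimension, and the first-variation lemma isolating which sets of directions have radius at least $\pi/2$. The spherical law of cosines step and the telescopic rescaling are comparatively routine once $(\mathrm P_n)$ and the geometric inequality are established.
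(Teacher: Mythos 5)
Your converse and telescopic arguments (Kleiner's blow-up theorem producing nearly regular $(n+2)$-point configurations, and asymptotic cones based at circumcentres of the $Y_k$) follow the same lines as the paper's proof and are essentially fine. The real divergence is the forward implication: you propose to prove the dimension-dependent Jung inequality from scratch, by induction on dimension through \catone spaces of directions (your $(\mathrm{P}_n)$, first variation at circumcentres, spherical law of cosines). The paper never touches circumcentres or first variation at all: it quotes Lang--Schroeder's Theorem~A, which gives $\rad_X(Y')\le\sqrt{n/(2(n+1))}\,\diam(Y')$ for \emph{every} subset $Y'$ of cardinality at most $n+1$ of an \emph{arbitrary} \cat space (no dimension hypothesis), and lets the dimension enter only through the topological Helly theorem of Kleiner (Lemma~\ref{kleiner}): in geometric dimension $\le n$, if every $(n+1)$-point subset of $Y$ has circumradius $\le r$, then so does $Y$ (Lemma~\ref{nplus}, via convexity of balls and the finite intersection property). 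This sidesteps entirely the analytic machinery you set out to rebuild -- in effect you are re-proving (a strengthening of) Lang--Schroeder's theorem rather than using it.

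Within that re-proof there are genuine unresolved gaps, precisely at the points you flag as ``anticipated obstacles''. First, in the inductive step of $(\mathrm{P}_{n+1})$ you assert that a circumcentre of $D$ exists when $\pi/2\le\rad_Z(D)<\pi$; in complete \catone spaces circumcentres are only guaranteed for radius $<\pi/2$ (and for radius $\le\pi/2$ in finite dimension, by the Balser--Lytchak result this paper cites), and for radius in $(\pi/2,\pi)$ in a non-proper space you have no existence statement -- finite geometric dimension does not obviously provide one, so this step would need almost-centres and a quantitative first-variation argument that is not sketched. Second, your reduction to finite subsets only produces finite $F\subset D$ with $\rad_Z(F)\ge\pi/2-\varepsilon$, to which $(\mathrm{P}_{n+1})$ as stated does not apply; you need a version with an explicit error term that you then let tend to zero, and this is not set up. Third, in the \cat step you apply first variation to the directions towards \emph{all} almost-farthest points of an arbitrary bounded $Y$, which requires the $o(t)$ error to be uniform over infinitely many points; that uniformity is exactly the technical content of the finite-point case in Lang--Schroeder and is not addressed. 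So while the overall architecture is coherent and the ``if'' and telescopic parts match the paper, the core lemma $(\mathrm{P}_n)$ is not proved as it stands, whereas the paper's combination of the cardinality-based Lang--Schroeder inequality with Kleiner's Helly lemma makes the whole forward direction a short formal argument.
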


Recall that the \textbf{circumradius} $\rad_X(Y)$ of a subset $Y \subseteq X$ is defined as the infimum of all
positive real numbers $r$ such that $Y$ is contained in some closed ball of radius $r$ of $X$.

\begin{rem} In the case of an $n$-dimensional regular Euclidean simplex one has equality in the theorem above.
For a short discussion of the case of equality as well as analogous statements in other curvature bounds we
refer to Section~\ref{secjung}.
\end{rem}

\medskip%
It turns out that Theorem~\ref{filtering} provides a key property that allows one to extend to
finite-dimensional \cat spaces several results which are known to hold for proper spaces. We now proceed to
describe a few of these applications.

\subsection*{Parabolic isometries}

A first elementary consequence of Theorem~\ref{filtering} is the existence of canonical fixed points at infinity
for parabolic isometries. This extends the results obtained in \cite[Theorem~1.1]{Nagano} and
\cite[Corollary~2.3]{CapraceMonod} in the locally compact setting.

\begin{cor}\label{FixedPointParabolic}
Let $g$ be a parabolic isometry of a \cat space $X$ of finite telescopic dimension. Then the centraliser
$\centra_{\Isom(X)}(g)$ possesses a canonical fixed point in $\bd X$.
\end{cor}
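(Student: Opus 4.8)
\emph{Proof strategy.} The plan is to feed the sublevel sets of the displacement function of $g$ into \tref{filtering}, and then to pin down a canonical point of the resulting subset of $\bd X$ by a circumcenter argument in the Tits metric.

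Recall that $g$ being parabolic means exactly that the infimum $\ell:=\inf_{x\in X}d(x,gx)$ of the convex displacement function $d_g\colon x\mapsto d(x,gx)$ is not attained (otherwise $g$ would be elliptic or hyperbolic). Hence, for every $t>\ell$, the sublevel set $X_t:=\{x\in X:d_g(x)\le t\}$ is a non-empty closed convex subset of $X$; the family $\{X_t\}_{t>\ell}$ is nested, hence filtering; and $\bigcap_{t>\ell}X_t=\{x:d_g(x)=\ell\}=\emptyset$. By \tref{filtering}, the set
\[
C\;:=\;\bigcap_{t>\ell}\bd X_t
\]
is therefore a non-empty subset of $\bd X$ whose intrinsic radius in the Tits metric is at most $\pi/2$. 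Being an intersection of Tits boundaries of complete convex subsets, $C$ is moreover closed and $\pi$-convex in $\bd X$, hence itself a complete \catone space, and it is finite-dimensional since $X$ has finite telescopic dimension. Finally, $C$ is canonically attached to $g$ and is invariant under the centraliser: if $h\in\centra_{\Isom(X)}(g)$ then $d\big(hx,g(hx)\big)=d\big(hx,h(gx)\big)=d(x,gx)$ for all $x$, so $hX_t=X_t$ for every $t$, whence $h$ preserves each $\bd X_t$ and stabilises $C$.

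It thus remains to attach to $C$ a point depending only on $C$, for such a point is then automatically fixed by $\centra_{\Isom(X)}(g)$. When the intrinsic radius of $C$ is \emph{strictly} below $\pi/2$, this is the classical fact that a complete \catone space of radius $<\pi/2$ has a unique circumcenter, which concludes the argument. The case where the intrinsic radius equals $\pi/2$ is the genuine obstacle: here uniqueness of the circumcenter can fail, and the finite-dimensionality of $C$ must be used in an essential way. I would dispose of it by a separate lemma to the effect that a complete, finite-dimensional \catone space of radius $\le\pi/2$ nevertheless carries a canonical point invariant under all its isometries — proved by induction on the dimension, passing from $C$ to its (non-empty, $\pi$-convex, finite-dimensional, radius $\le\pi/2$) set of circumcenters, which is strictly smaller unless it is already a point, in the spirit of the center constructions for buildings of Balser--Lytchak. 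Granting this, the canonical point of $C$ is the desired fixed point of $\centra_{\Isom(X)}(g)$ in $\bd X$.
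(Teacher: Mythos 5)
Your proposal is correct and follows essentially the same route as the paper: sublevel sets of the displacement function (nested, convex, centraliser-invariant, with empty intersection since $g$ is parabolic), Theorem~\ref{filtering} to get a non-empty centraliser-invariant subset of $\bd X$ of intrinsic radius at most $\pi/2$, finite-dimensionality of $\bd X$ via Proposition~\ref{dimension}, and then a canonical centre. The only difference is that the paper simply invokes the barycentre result of Balser--Lytchak (Prop.~1.4 of \cite{BalserLytchak_Centers}) for finite-dimensional \catone sets of radius $\leq \pi/2$, which is exactly the lemma you sketch proving by hand, so your final induction need not be carried out.
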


\subsection*{Amenable group actions}

The next application provides obstructions to isometric actions of amenable groups; in the locally compact case
the corresponding statement is due to S.~Adams and W.~Ballmann \cite{Adams-Ballmann}, and generalizes earlier
results by M.~Burger and V.~Schroeder~\cite{BurgerSchroeder}.

\begin{thm} \label{AdamsBallmann}
Let $X$ be a complete \cat space of finite telescopic dimension. Let $G$ be an amenable locally compact group
acting continuously on $X$ by isometries.  Then either $G$ stabilises a flat subspace (possibly reduced to a
point) or $G$ fixes a point in the ideal boundary $\bd X$.
\end{thm}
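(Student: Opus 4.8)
The plan is to follow the strategy of Adams--Ballmann, replacing their use of properness by \tref{filtering}. First I would reduce to the case where $G$ acts \emph{without} a fixed point in $\bd X$ and \emph{without} a global fixed point in $X$, and aim to produce a $G$-invariant flat. The starting point is amenability: since $G$ is amenable and locally compact, there is a $G$-invariant mean on $L^\infty(G)$, equivalently (by the standard fixed-point property for affine actions on convex compacta) a $G$-fixed point in any nonempty $G$-invariant convex compact subset of a locally convex space on which $G$ acts continuously and affinely. The object I would feed into this machinery is the space of probability measures / or better, the displacement functions: following Adams--Ballmann one considers, for the action of $G$, the function $x \mapsto \sup_{g \in K} d(x,gx)$ for $K \subset G$ compact, or the associated ``barycentric'' circumcenter construction. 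Concretely, I would use amenability to obtain an invariant mean $m$ on $G$ and then, for each $x \in X$, form the ``center of mass'' of the orbit map $g \mapsto gx$ with respect to $m$; by uniform convexity of \cat spaces this is well-defined when the orbit is bounded.

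The dichotomy then runs as follows. Case 1: some (hence every) $G$-orbit is bounded. Then the circumcenter of an orbit closure is a $G$-fixed point in $X$ (a flat reduced to a point), using that circumcenters exist and are unique in complete \cat spaces. Case 2: every $G$-orbit is unbounded. Here I would use \tref{filtering}: consider the filtering family of closed convex \emph{sublevel sets} of the function $f(x) = \limsup$-type functional built from the mean, or more robustly, the family $\{ \overline{\mathrm{conv}}(Gx) : x \in X\}$ is not obviously filtering, so instead I would look at the canonical convex function associated to the family of closed balls that ``almost'' contain orbits, mirroring the gradient-flow idea sketched after \tref{filtering}. The cleaner route: since $G$ has no bounded orbit, \tref{filtering} applied to the family of closed convex hulls of sub-rays of an orbit yields a nonempty $G$-invariant set $Z = \bigcap \bd X_\alpha \subseteq \bd X$ of intrinsic radius $\le \pi/2$. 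If that radius is $<\pi/2$, the circumcenter in $\bd X$ (with its Tits metric) is a canonical $G$-fixed boundary point, contradicting our reduction. So we may assume $Z$ has intrinsic radius exactly $\pi/2$; by the spherical/\catone version of Jung's theorem (\pref{prop:cat1:filtering}, invoked via the excerpt) this forces $\bd X$, or rather the relevant convex subset, to contain a ``round sphere'' direction, which by the standard correspondence between subsets of $\bd X$ of diameter $\le \pi$ isometric to spheres and flats in $X$ (Lang--Schroeder, or the \cat analogue of the flat-torus / sphere-at-infinity theorem) produces a $G$-invariant flat $F \subseteq X$, as required.

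In more detail, the key intermediate claim I would isolate is: \emph{if $G$ acts on a complete \cat space $X$ of finite telescopic dimension with all orbits unbounded and no fixed point in $\bd X$, then $X$ contains a nonempty $G$-invariant flat.} To prove it, pick $x_0 \in X$; for $t > 0$ let $C_t = \overline{\mathrm{conv}}\, \{ g x_0 : g \in G,\ d(x_0, g x_0) \ge t\}$, a decreasing (hence filtering) family of nonempty closed convex sets whose intersection in $X$ is empty (no bounded orbit, after checking this really is empty — this needs the no-fixed-point hypothesis and an argument that the orbit cannot concentrate). By \tref{filtering}, $\bigcap_t \bd C_t =: Z$ is a nonempty $G$-invariant subset of $\bd X$ of intrinsic (Tits) radius $\le \pi/2$. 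Since $G$ fixes no boundary point, the Tits-circumcenter of $Z$ cannot be a single point unless $Z$ itself is a single point fixed by $G$ — excluded — so $Z$ has Tits-radius exactly $\pi/2$ and is moreover $G$-invariant. Now apply the \catone analogue of Jung (the second half of \tref{raddiam} gives the Euclidean version; for the spherical version one uses the companion result the authors refer to, or \cite{LangSchroeder}): a \catone space that is its own closed ball of radius $\pi/2$ and has finite dimension contains a ``unit sphere'' join factor; equivalently $\bd X$ contains an isometrically embedded round sphere $S$ with $Z \subseteq$ the ball of radius $\pi/2$ around $S$, and $S$ is $G$-invariant after passing to the canonical such $S$ (circumcenter is canonical). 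Finally invoke the standard fact that a $G$-invariant round $k$-sphere in $\bd X$ bounds a $G$-invariant $(k{+}1)$-flat in $X$ (this is the \cat version of the flat strip / parallel set argument: the union of geodesics joining antipodal boundary points in $S$ is a convex flat subspace, $G$-invariant by uniqueness of geodesics).

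The main obstacle I expect is the \emph{last step but one}: upgrading ``$Z$ has Tits-radius $\pi/2$'' to ``$\bd X$ contains a $G$-invariant round sphere.'' The circumradius-exactly-$\pi/2$ situation is the borderline case of the spherical Jung inequality, and one must extract genuine flatness (a round sphere, not merely a set of large radius) from the \emph{equality} case — this is exactly the delicate rigidity phenomenon, and it is why the remark after \tref{raddiam} flags the equality discussion in Section~\ref{secjung}. Making this rigidity statement precise in the infinite-dimensional-but-finite-telescopic-dimension setting, and checking that the resulting sphere can be chosen canonically (hence $G$-invariant), is the crux. A secondary technical point is verifying that the family $\{C_t\}$ genuinely has empty intersection in $X$ and that its boundary intersection is what carries the $G$-action faithfully; this requires care because a priori the convex hulls could behave badly in a non-proper space, but finite telescopic dimension plus \tref{filtering} is precisely what tames this.
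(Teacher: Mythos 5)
The decisive flaw is that amenability disappears from your argument exactly where it is needed. The intermediate claim you isolate -- all orbits unbounded and no fixed point in $\bd X$ implies a $G$-invariant flat -- makes no use of amenability, and it is false: a free group acting on its Cayley tree (a complete \cat space of telescopic dimension $1$) has all orbits unbounded, fixes no point at infinity, and stabilises no flat. In the unbounded case amenability must first be converted into an invariant analytic object before any boundary geometry is extracted. The paper does this by passing to a minimal invariant subspace (\pref{MinimalReduced}(ii)), splitting off the maximal Euclidean factor, and then letting $G$ act on the compact convex set $\mathcal A = \mathrm{Conv}(\overline{\mathcal C})\subset \mathcal C_0$ of normalised $1$-Lipschitz convex functions from \S\ref{spacefunction}; amenability yields a $G$-fixed $f\in\mathcal A$, which is a quasi-invariant convex function, and $f$ is non-constant precisely because $\mathcal A$ contains no affine function -- this is what \pref{LytchakSchroeder} (via Proposition~\ref{FoertschLytchak}) and \lref{nonaffine} are for. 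The sublevel sets of $f$ then form a $G$-permuted chain of closed convex sets with empty intersection, and \tref{filtering} together with the centre result \cite[Prop.~1.4]{BalserLytchak_Centers} produces a $G$-fixed point at infinity, contradicting the assumption; the flat of the theorem is the Euclidean factor of the minimal subspace, not a flat reconstructed from a sphere at infinity.

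There are further genuine gaps in the route you sketch. The family $C_t=\overline{\mathrm{conv}}\{gx_0 : d(x_0,gx_0)\geq t\}$ is not $G$-invariant (its definition refers to the base point $x_0$), so $\bigcap_t \bd C_t$ need not be $G$-invariant, and its emptiness in $X$ is unjustified, as you note yourself. The dichotomy ``Tits radius $<\pi/2$ versus $=\pi/2$'' is also a red herring: \cite[Prop.~1.4]{BalserLytchak_Centers} applies to any finite-dimensional \catone set of intrinsic radius $\leq\pi/2$, equality case included, and gives a canonical -- hence $G$-fixed -- centre, so no rigidity is needed at that point. Conversely, the rigidity chain you do propose (radius exactly $\pi/2$ $\Rightarrow$ a round sphere in $\bd X$ $\Rightarrow$ a $G$-invariant flat in $X$) is unsupported: equality in the radius bound does not force the presence of a round sphere, and in a non-proper space a sphere at infinity need not bound a flat, since geodesics joining antipodal ideal points may fail to exist and, when they do, the parallel set is a product rather than a single canonical flat.
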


Combining this with the arguments of \cite{CapraceTD}, one obtains the following description of the algebraic
structure of amenable groups acting on \cat cell complexes.

\begin{thm}\label{cell}
Let $X$ be a \cat cell complex with finitely many types of cells and $G$ be a locally compact group admitting an
isometric action on $X$ which is continuous,  cellular and metrically proper. Then a closed subgroup $H < G$ is
amenable if and only if it is (topologically locally finite)-by-(virtually Abelian).
\end{thm}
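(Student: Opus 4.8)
The plan is to prove the non-trivial implication (amenability $\Rightarrow$ the stated algebraic structure), since the converse is immediate: a topologically locally finite group is a directed union of compact, hence amenable, subgroups; virtually Abelian groups are amenable; and an extension of an amenable locally compact group by an amenable one is amenable. For the forward implication I would combine \tref{AdamsBallmann} with the arguments of \cite{CapraceTD}. Note first that, since $X$ has only finitely many isometry types of cells, it is complete and finite-dimensional, hence of finite telescopic dimension, so that \tref{AdamsBallmann} is applicable to any closed subgroup acting on $X$; and note that $G$ is totally disconnected. Indeed, the setwise stabiliser of any cell is open --- barycentres of cells are uniformly separated from one another and uniformly interior to their own cells, as there are finitely many cell types --- so $G^\circ$ acts trivially on the discrete set of cells, hence fixes every vertex, hence acts as the identity on each cell (an isometry of a Euclidean polytope fixing all of its vertices is the identity). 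Thus $G^\circ$ is trivial, and every closed subgroup $H < G$ is a totally disconnected locally compact group acting cellularly and metrically properly on $X$.

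Now let $H < G$ be closed and amenable. By \tref{AdamsBallmann}, either $H$ stabilises a flat $F \subseteq X$ (possibly a point) or $H$ fixes a point $\xi \in \bd X$. Suppose first that $H$ stabilises a flat $F$. Since $F$ is closed and convex, the restricted action of $H$ on the locally compact space $F$ is again metrically proper; hence the associated homomorphism $\rho \colon H \to \Isom(F)$ has closed image in the Lie group $\Isom(F)$, and its kernel fixes a point of $X$ and so is compact, in particular topologically locally finite. The image $\rho(H) \cong H/\ker\rho$ still possesses a compact open subgroup, so it is totally disconnected; being a closed subgroup of a Lie group, it is therefore discrete. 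By Bieberbach's theorem a discrete group of Euclidean isometries is virtually free Abelian of finite rank, and we conclude that $H$ is (topologically locally finite)-by-(virtually Abelian).

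There remains the case in which $H$ fixes a point $\xi \in \bd X$; this is the heart of the matter, and it is here that the machinery of \cite{CapraceTD} is used in earnest. I would proceed by induction on $\dim X$, the base case of an $\RR$-tree being elementary. The Busemann character $H \to \RR$ determined by $\xi$ splits off an Abelian quotient; its kernel $H_1$ preserves every horoball centred at $\xi$, and one analyses $H_1$ through its action on a suitable $H_1$-invariant subspace of strictly smaller dimension --- the parallel set of a ray pointing to $\xi$, or a subspace arising from a second fixed point at infinity or from a flat obtained by applying \tref{AdamsBallmann} once more to $H_1$ --- to which the induction hypothesis applies. Total disconnectedness is decisive here: it rules out a continuous rotational factor at each reduction step, forcing the relevant holonomy-type groups to be finite, and one finally verifies that the resulting subnormal series collapses to a single (topologically locally finite)-by-(virtually Abelian) extension.

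The main obstacle is exactly this last case. The delicate point is to arrange the induction so that it terminates and assembles into the desired two-step structure, rather than into an a priori longer series: each reduction must be shown to land in a strictly lower-dimensional subspace still carrying a metrically proper action of the subgroup at hand, and the way the successive extensions fit together must be controlled. A further subtlety is that these reductions leave the category of cell complexes --- a horosphere or a parallel set need not be a subcomplex --- so the hypothesis of finitely many cell types can be exploited only through its uniform geometric consequences, namely finite dimension and the uniform bounds underlying the total disconnectedness of $G$. This is precisely the level of generality at which \cite{CapraceTD} operates, and the function of \tref{AdamsBallmann} is to make those arguments available without any local compactness assumption on the space $X$.
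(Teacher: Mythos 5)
Your easy direction, your reduction of the forward implication via \tref{AdamsBallmann}, and your treatment of the flat case (compact kernel by metric properness, closed totally disconnected hence discrete image in $\Isom(\RR^n)$, Bieberbach) are fine and consistent with what the paper intends. The genuine gap is the case of a fixed point $\xi \in \bd X$, which you yourself identify as ``the heart of the matter'' but only gesture at. The paper's proof does not run the induction you describe, and your version of it would break down at exactly the points you flag. Following \cite{CapraceTD}, one does not pass to lower-dimensional \emph{subspaces of $X$} (parallel sets, horospheres) carrying metrically proper actions; one introduces the refined boundary $\bdfine X$, built by iterating the transverse construction $\xi \mapsto X_\xi$ (the \cat space of strong asymptote classes of rays pointing to $\xi$). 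These transverse spaces have strictly smaller geometric dimension, but they are neither subspaces of $X$ nor cell complexes, and --- crucially --- the stabiliser $G_\xi$ does \emph{not} act on $X_\xi$ metrically properly: the failure of properness is precisely what produces the topologically locally finite kernel in the final two-step structure. So your requirement that ``each reduction land in a strictly lower-dimensional subspace still carrying a metrically proper action of the subgroup at hand'' is not satisfiable, and it is not the mechanism by which the extension collapses to (topologically locally finite)-by-(virtually Abelian).

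Concretely, the proof needs the two statements the paper isolates: (a) every amenable closed subgroup of $G$ virtually fixes a point of $X \cup \bdfine X$, proved by induction on geometric dimension, applying \tref{AdamsBallmann} at each level of the transverse construction and using the uniform bound on the level of points of the refined boundary (the remark after Corollary~4.4 of \cite{CapraceTD}); and (b) the $G$-stabiliser of any point of $\bdfine X$ is (topologically locally finite)-by-(virtually Abelian), where the cocompactness argument of Proposition~4.5 of \cite{CapraceTD} is replaced by a compactness argument resting on the hypothesis that $X$ has finitely many types of cells, all compact. Your proposal contains neither the construction that controls the iterated passage to transverse spaces nor any argument for (b) --- the assertion that ``the resulting subnormal series collapses to a single extension'' is exactly the statement that must be proved, and the Busemann character plus total disconnectedness alone do not yield it. As written, the central case of the theorem remains unproved.
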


By definition, a subgroup $H$ of a topological group $G$ is \textbf{topologically locally finite} if the closure
of every finitely generated subgroup of $H$ is compact. We refer to \cite{CapraceTD} for more details. The proof
of Theorem~\ref{cell} proceeds as in \emph{loc.~cit.} One introduces the \textbf{refined boundary} $\bdfine X$
of the \cat space and shows, using Theorem~\ref{AdamsBallmann}, that any amenable subgroup of $G$ virtually
fixes a point in $X \cup \bdfine X$; conversely any point of $X \cup \bdfine X$ has an amenable stabilizer in
$G$.

\subsection*{Minimal and reduced actions}

A basic property of \cat spaces with finite telescopic dimension is that their Tits boundary has finite
geometric dimension (see Proposition~\ref{dimension} below). Given this observation, Theorem~\ref{filtering} may
be used to extend several results of \cite[Part~I]{CapraceMonod} to the finite-dimensional case. The following
collects a few of these statements.

\begin{prop}\label{MinimalReduced}
Let $X$ be a complete \cat space of finite telescopic dimension and let $G < \Isom(X)$ be any group  of
isometries.

\begin{enumerate}
\item 
If the $G$-action is evanescent, then $G$ fixes a point in $X \cup \bd X$.

\item If $G$ does not fix a point in the ideal boundary, then there is a non-empty closed convex $G$-invariant
subset  $Y \subseteq X$ on which $G$ acts minimally.

\item Suppose that $X$ is irreducible. If $G$ acts minimally without fixed point at infinity on $X$, then so
does every non-trivial normal subgroup of $G$; furthermore, the $G$-action is reduced.

\item If $\Isom(X)$ acts minimally on $X$, then for each closed convex subset $Y \subsetneq X$ we have $\bd Y
\subsetneq \bd X$.
\end{enumerate}
\end{prop}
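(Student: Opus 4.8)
The strategy is to follow \cite[Part~I]{CapraceMonod}, where these four statements are established under the blanket hypothesis that $X$ is proper. That hypothesis enters the arguments of \emph{loc.\ cit.} in only two ways: first, to ensure that a filtering (in particular, a nested) family of non-empty closed convex subsets of $X$ has non-empty intersection in the bordification $\overline X = X \cup \bd X$; second, to know that the visual boundary, with its Tits metric, is a \catone space nice enough that a subset of $\bd X$ of intrinsic radius $\le \pi/2$ which is invariant under a group of isometries has a canonical fixed point --- a ``circumcentre'' --- in $\bd X$. Under the assumption of finite telescopic dimension, the first input is supplied by \tref{filtering}, and the second by the facts that $\bd X$ is a complete \catone space of finite geometric dimension (\pref{dimension}) and that such spaces carry the requisite circumcentre theory (this is the content of, or follows from, \pref{prop:cat1:filtering}). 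Consequently it suffices, in each item, to locate the appeal to properness and replace it by these two inputs; the rest of each proof is taken over from \cite{CapraceMonod}.

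I now indicate where this substitution occurs. For (i): by definition (see \cite{CapraceMonod}), an evanescent action admits a $G$-stable filtering family witnessing evanescence, which one reduces, as in \emph{loc.\ cit.}, to a family $\{X_\alpha\}$ of non-empty closed convex subsets with $\bigcap_\alpha X_\alpha = \emptyset$. By \tref{filtering} the set $B = \bigcap_\alpha \bd X_\alpha$ is then a non-empty subset of $\bd X$ of intrinsic radius $\le \pi/2$; it is $G$-invariant because $G$ permutes the $X_\alpha$, and its canonical centre is the desired fixed point in $\bd X$. For (ii): one applies Zorn's Lemma to the family of non-empty closed convex $G$-invariant subsets of $X$, which has a minimal element provided every chain $\{Y_\alpha\}$ has non-empty intersection. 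A chain is a filtering family, so by \tref{filtering} either $\bigcap_\alpha Y_\alpha$ is non-empty in $X$ --- a closed convex $G$-invariant lower bound, as wanted --- or $\bigcap_\alpha \bd Y_\alpha$ is a non-empty $G$-invariant subset of $\bd X$ of intrinsic radius $\le \pi/2$, whose canonical centre would be a $G$-fixed point at infinity, contrary to hypothesis. Hence a minimal $Y$ exists, and $G$ acts minimally on $Y$ by construction.

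For (iii), let $N$ be a non-trivial normal subgroup of $G$. Arguing as in \cite{CapraceMonod} --- using $G$-invariance of $\Fix_{\bd X}(N)$, the irreducibility and minimality of the $G$-action, and, wherever compactness of $\overline X$ was used, \tref{filtering} together with \pref{dimension} --- one first shows that $N$ fixes no point of $\bd X$. Applying (ii) to $N$ yields a minimal non-empty closed convex $N$-invariant subset $Y \subseteq X$; since $N$ is normal, the minimal $N$-invariant subsets form a family of pairwise parallel subspaces which $G$ permutes, so their union is a non-empty closed convex $G$-invariant subset of $X$, hence all of $X$ by minimality of the $G$-action. This exhibits $X$ as a product having $Y$ as a fibre, and irreducibility forces $Y = X$; thus $N$ acts minimally, and without fixed point at infinity. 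The same analysis --- ruling out, by irreducibility, a $G$-invariant parallelism class of proper closed convex subsets --- shows that the $G$-action is reduced. Finally (iv) is the case $G = \Isom(X)$: a proper closed convex $Y \subsetneq X$ with $\bd Y = \bd X$ would, after passing to a canonical such subset and applying the parallel-set decomposition, yield a proper $\Isom(X)$-invariant closed convex subset of $X$, contradicting minimality; once more the only change to the argument of \cite{CapraceMonod} is the replacement of compactness of $\overline X$ by \tref{filtering}.

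The main obstacle is concentrated in the second input above: one must be certain that a group-invariant subset of $\bd X$ of intrinsic radius \emph{exactly} $\pi/2$ really does possess a canonical \emph{point} as centre. The naive circumcentre need not be unique in that borderline case (for instance, an equatorial circle in a round $2$-sphere has two antipodal circumcentres), so this is precisely where finite-dimensionality of $\bd X$ must do genuine work; one exploits that the sets $\bigcap_\alpha \bd X_\alpha$ arising above are not arbitrary but are closed and $\pi$-convex --- intersections of Tits boundaries of convex subsets --- and invokes \pref{prop:cat1:filtering} and the circumcentre theory of finite-dimensional \catone spaces behind it. A secondary, more clerical obstacle is to confirm that the product-decomposition and parallel-set arguments of \cite{CapraceMonod} used in (iii) and (iv), originally phrased for proper spaces, carry over line by line once \tref{filtering} is available.
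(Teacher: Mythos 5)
Your treatment of (ii)--(iv) is essentially the paper's own proof: for (ii), Zorn's lemma combined with \tref{filtering}, the $G$-fixed point at infinity being the canonical barycentre of a subset of intrinsic radius $\leq \pi/2$ of the finite-dimensional boundary (\pref{dimension}); note, however, that the existence of this barycentre is not what \pref{prop:cat1:filtering} provides --- the paper invokes Balser--Lytchak (Prop.~1.4) for it --- although you do correctly identify the radius-exactly-$\pi/2$ borderline as the place where finite dimensionality of $\bd X$ must do real work. For (iii) and (iv) the paper, like you, simply repeats the arguments of Caprace--Monod verbatim, with \tref{filtering} (in fact \lref{lem:nested}) and \pref{dimension} replacing properness, so there is no disagreement there beyond the level of detail.

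The genuine gap is in (i). You assert that an evanescent action yields, ``as in \emph{loc.\ cit.}'', a $G$-stable filtering family of non-empty closed convex subsets of $X$ with empty intersection, to which \tref{filtering} can be applied. But evanescence only provides an unbounded set $T$ on which each $g \in G$ has bounded displacement; the natural convex families one can extract from it (closed convex hulls of tails of an unbounded sequence in $T$, or displacement sub-level sets) are merely quasi-invariant, and, more seriously, their intersection in $X$ need not be empty, in which case your argument produces nothing. For instance, take $X = \RR^2$, $G$ a non-trivial group of translations and $T$ an unbounded sequence escaping to infinity in a dense set of directions: the action is evanescent, the hulls of tails all equal $\RR^2$, $G$ fixes no point of $X$, and the fixed points --- which exist only at infinity --- are not detected by your construction. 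The paper avoids this issue altogether: it \emph{deduces} (i) from (ii) and (iii) together with the canonical splitting of \pref{FoertschLytchak}. If $G$ fixes no point at infinity, (ii) gives a minimal invariant subspace, whose irreducible factors carry minimal actions without fixed points at infinity; by (iii) these actions are reduced, hence non-evanescent unless the factor is bounded, and minimality then reduces everything to a single point of $X$, which is the desired fixed point. So (i) is not a standalone application of \tref{filtering}; either adopt this reduction to (ii) and (iii), or supply an argument that handles the case where your filtering family has non-empty intersection.
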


Following Nicolas Monod~\cite{Monod_superrigid}, we say that the action of a group $G$ on a \cat space $X$ is
\textbf{evanescent} if there is an unbounded subset $ T \subseteq X$ such that for every compact set $Q \subset
G$ the set $\{d(gx, x) : g \in Q, x \in T\}$ is bounded.  Recall further that the $G$-action is said to be
\textbf{minimal} if there is no non-empty closed convex $G$-invariant subset $Y \subsetneq X$. Finally, it is
called \textbf{reduced} if there is no non-empty closed convex subset $Y \subsetneq X$ such that for each $g \in
G$, the sets $Y$ and $g.Y$ are at bounded Hausdorff distance from one another. The relevance of the notions of
evanescent and reduced actions was first highlighted by Nicolas Monod~\cite{Monod_superrigid} in the context of
geometric superrigidity. In particular, the combination of \cite[Theorem~6]{Monod_superrigid} with
Proposition~\ref{MinimalReduced}(iii) yields the following (see \cite[Theorem~9.4]{CapraceMonod} for the
corresponding statement in the locally compact case).

\begin{cor}\label{superrigidity}
Let $\Gamma$ be an irreducible uniform (or square-integrable weakly cocompact) lattice in a product $G = G_1
\times \dots  \times G_n$ of $n \geq 2$ locally compact $\sigma$-compact groups. Let $X$ be a complete \cat
space of finite telescopic dimension without Euclidean factor. Then any minimal isometric $\Gamma$-action on $X$
without fixed point at infinity extends to a continuous $G$-action by isometries.
\end{cor}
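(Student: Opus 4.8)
The plan is to derive \cref{superrigidity} from the geometric superrigidity theorem of Monod, namely \cite[Theorem~6]{Monod_superrigid}, which asserts that a minimal reduced isometric $\Gamma$-action (for $\Gamma$ as in the statement) on a complete \cat space without non-trivial Euclidean factor either fixes a point in the ideal boundary or extends to a continuous $G$-action by isometries. Of the three conditions ``minimal'', ``reduced'' and ``without fixed point at infinity'', only reducedness is not already part of the hypotheses of \cref{superrigidity}. So the entire task is to show that a minimal isometric $\Gamma$-action without fixed point at infinity on a complete \cat space $X$ of finite telescopic dimension and without Euclidean factor is automatically reduced; granting this, \cite[Theorem~6]{Monod_superrigid} applies directly and furnishes the desired extension (should that theorem also require some finiteness of the Tits boundary of $X$, it is supplied by \pref{dimension}). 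This is precisely the scheme by which \cite[Theorem~9.4]{CapraceMonod} is obtained in the locally compact setting, with \pref{MinimalReduced}(iii) now playing the role that the results of \cite[Part~I]{CapraceMonod} played there.

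To establish reducedness I would argue as follows. By the canonical splitting of a complete \cat space of finite telescopic dimension into irreducible pieces (the finite-dimensional counterpart of the decomposition theorem of \cite[Part~I]{CapraceMonod}, available here in the spirit of \pref{dimension} and \pref{MinimalReduced}), write $X = X_1 \times \dots \times X_k$ with each $X_i$ irreducible; since $X$ has no Euclidean factor none of the $X_i$ is a line, and finite telescopic dimension bounds $k$. The group $\Isom(X)$, hence $\Gamma$, permutes the (pairwise isometric) factors. Fix $i$, let $\Gamma_i \leq \Gamma$ be the stabiliser of the $i$-th factor and $H_i \leq \Isom(X_i)$ the group of isometries of $X_i$ induced by $\Gamma_i$; note that $X_i$ is again complete of finite telescopic dimension. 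A routine cylinder argument shows that the $H_i$-action on $X_i$ is minimal: a proper closed convex $H_i$-invariant subset $Z \subsetneq X_i$ spreads, over the $\Gamma$-orbit of the $i$-th factor, to a proper closed convex $\Gamma$-invariant subset of $X$, contradicting minimality of the $\Gamma$-action. Moreover $H_i$ fixes no point of $\bd X_i$: otherwise such a point, regarded in $\bd X$ via the embedding $\bd X_i \hookrightarrow \bd X$, would be fixed by $\Gamma_i$; its $\Gamma$-orbit is finite and its members lie in pairwise distinct subspheres $\bd X_j$ of the spherical join $\bd X$, hence are pairwise at Tits distance $\pi/2$, so this orbit has circumradius $<\pi/2$ and a unique circumcentre in $\bd X$, which $\Gamma$ would fix --- contrary to hypothesis. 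Now \pref{MinimalReduced}(iii), applied to the irreducible space $X_i$ and the group $H_i$, shows that the $H_i$-action is reduced. A further round of the same bookkeeping, organised along the $\Gamma$-orbits on the set of factors, propagates reducedness from the factors to the product, so that the $\Gamma$-action on $X$ is reduced, as required.

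The mathematical substance of the corollary is concentrated in \pref{MinimalReduced}(iii) --- and, behind it, in \tref{filtering}; the corollary itself is an assembly of that with \cite[Theorem~6]{Monod_superrigid}. The step most likely to need care is therefore not any computation but the interface work: checking that the hypotheses and conclusion of \cite[Theorem~6]{Monod_superrigid} match the situation exactly, and, on the geometric side, transporting minimality, reducedness and absence of fixed points at infinity back and forth between $X$ and its irreducible factors through the permutation action of $\Gamma$ on those factors. None of this is deep, but it is where the verification genuinely has to be done.
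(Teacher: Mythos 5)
Your overall scheme---feed reducedness into \cite[Theorem~6]{Monod_superrigid}, with \pref{MinimalReduced}(iii) doing the geometric work---is the right one, and your verifications that each irreducible factor inherits a minimal action without fixed point at infinity (via the circumcentre of the finite orbit in the spherical join structure of $\bd X$) are essentially the checks that are needed. The gap is the final claim that reducedness passes from the factors to the product: it does not, and in the relevant generality it cannot. If $X = X_1 \times \cdots \times X_m$ has $m \geq 2$ unbounded irreducible factors and $g$ preserves each factor, then for $Y = X_1 \times \{x_2\} \times \cdots \times \{x_m\}$ the translate $g.Y = X_1 \times \{g_2 x_2\} \times \cdots \times \{g_m x_m\}$ is at Hausdorff distance at most $\bigl(\sum_{j\geq 2} d(x_j, g_j x_j)^2\bigr)^{1/2} < \infty$ from $Y$. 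By \pref{FoertschLytchak} every isometry preserves the canonical decomposition up to permuting isometric factors, so whenever the $X_i$ are pairwise non-isometric (a perfectly admissible case under the hypotheses of \cref{superrigidity}) \emph{every} element of $\Gamma$ acts componentwise and $Y$ witnesses that the $\Gamma$-action on $X$ is not reduced; even when isometric factors are permuted you offer no argument, and none is available along these lines. Hence the reducedness hypothesis of \cite[Theorem~6]{Monod_superrigid} is unattainable for the action on the whole product, which is exactly why \pref{MinimalReduced}(iii) carries an irreducibility assumption.

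The paper accordingly never applies Monod's theorem to $X$ itself. It uses \pref{FoertschLytchak} to pass to a finite-index normal subgroup $\Gamma^*$ of $\Gamma$ acting componentwise on the decomposition, applies \pref{MinimalReduced}(iii) together with \cite[Theorem~6]{Monod_superrigid} one irreducible factor at a time, and then reassembles the extensions: letting $G_i^*$ be the closure of the projection of $\Gamma^*$ to $G_i$ and $G^* = G_1^* \times \cdots \times G_n^*$, the subgroup $G^*$ is closed, normal and of finite index in $G$ with $G = \Gamma \cdot G^*$, so it suffices to extend the $\Gamma^*$-action to a continuous $G^*$-action, and this is done factorwise. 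That passage through $\Gamma^*$ and $G^*$ is precisely the interface work which your sketch replaces by the false propagation step; it is where the argument actually has to be carried out.
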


On the other hand, combining Proposition~\ref{MinimalReduced} with Theorem~\ref{AdamsBallmann} yields the
following extension of \cite[Corollary~4.8]{CapraceMonod}.

\begin{cor}\label{radical}
Let $G$ be a locally compact  group acting continuously and minimally on a  \cat space $X$ of finite telescopic
dimension, without fixing any point at infinity. Then the amenable radical $R$ of $G$ stabilizes the maximal
Euclidean factor of $X$. In particular, if $X$ has no non-trivial Euclidean factor, then $R$ acts trivially.
\end{cor}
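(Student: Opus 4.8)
The plan is to reduce the assertion to the case of an irreducible non-Euclidean space, where it follows by confronting Proposition~\ref{MinimalReduced}(iii) with Theorem~\ref{AdamsBallmann}. First I would replace $G$ by its image in $\Isom(X)$: the $R$-action on $X$ factors through this image, the image of $R$ in it is again an amenable closed normal subgroup, and whether $R$ stabilises the maximal Euclidean factor of $X$ depends only on this image; so we may as well assume $G \le \Isom(X)$ and show that an arbitrary amenable closed normal subgroup $R \trianglelefteq G$ stabilises the maximal Euclidean factor. Since $X$ has finite telescopic dimension, its Tits boundary has finite geometric dimension (Proposition~\ref{dimension}), so the product-decomposition theory available for such spaces --- the same theory that underlies Proposition~\ref{MinimalReduced} --- provides a canonical splitting $X = \RR^n \times X_1 \times \cdots \times X_p$ in which $\RR^n$ is the maximal Euclidean factor and each $X_i$ is irreducible, unbounded and non-Euclidean. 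Every isometry of $X$ respects this splitting, acting on $\RR^n$ by a Euclidean isometry and permuting the $X_i$ (only mutually isometric ones can be interchanged); in particular so do $G$ and $R$. Moreover, a proper closed convex invariant subset of some $X_i$, or a point of $\bd X_i$ fixed by the induced action, would produce one in $X$; hence the $G$-action induces on each $X_i$ an action that is still minimal and still has no fixed point at infinity. Since the requirement that $R$ stabilise $\RR^n$ is precisely the requirement that $R$ act trivially on $X_1 \times \cdots \times X_p$, it is now enough to show that $R$ acts trivially on every $X_i$.

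Fix an index $i$. After passing, if necessary, to a finite-index subgroup of $G$ preserving each factor --- a standard reduction carried out as in the locally compact case of \cite{CapraceMonod} --- we may form the images $H_i, R_i \le \Isom(X_i)$ of $G, R$ under the projection onto the $i$-th factor. Then $R_i \trianglelefteq H_i$, and $H_i$ acts minimally on the irreducible space $X_i$ without fixing a point at infinity. Suppose $R_i \ne 1$. By Proposition~\ref{MinimalReduced}(iii), the non-trivial normal subgroup $R_i$ then itself acts minimally on $X_i$ without fixed point at infinity. But $R_i$, being a continuous homomorphic image of the amenable locally compact group $R$, is again amenable and locally compact; so Theorem~\ref{AdamsBallmann} applies and, a fixed point in $\bd X_i$ being excluded, yields a flat subspace $F \subseteq X_i$ stabilised by $R_i$. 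As $F$ is a non-empty closed convex $R_i$-invariant subset, minimality forces $F = X_i$, so $X_i$ is isometric to a Euclidean space --- contradicting that $X_i$ is a non-trivial non-Euclidean factor. Hence $R_i = 1$ for every $i$, which is exactly the statement that $R$ acts trivially on $X_1 \times \cdots \times X_p$, i.e.\ that $R$ stabilises the maximal Euclidean factor of $X$. If $X$ has no non-trivial Euclidean factor, then $X = X_1 \times \cdots \times X_p$ and the conclusion reads: $R$ acts trivially.

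I expect the only serious difficulty to lie in securing the product machinery used in the first step. One must know that, for complete \cat spaces of finite telescopic dimension, the splitting off of a maximal Euclidean factor and the ensuing decomposition into irreducible factors exist, are canonical, and are $\Isom(X)$-invariant --- this is exactly where the finite-dimensionality of the Tits boundary, Proposition~\ref{dimension}, is indispensable --- and one must check with some care that minimality together with the absence of a fixed point at infinity are inherited by the induced action on each irreducible factor, the delicate point being the interplay of minimality with the passage to the finite-index subgroup acting trivially on the set of factors (for instance via the minimal-invariant-subset statement in Proposition~\ref{MinimalReduced}(ii), itself a consequence of Theorem~\ref{filtering}). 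Once these structural facts are in place, the remaining curvature-geometric part of the argument --- Proposition~\ref{MinimalReduced}(iii) feeding into Theorem~\ref{AdamsBallmann} to trap a non-trivial amenable normal subgroup inside a flat, and hence Euclidean, factor --- is short.
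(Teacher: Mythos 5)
Your strategy is the same as the paper's: split $X$ canonically as $\RR^n \times X_1 \times \cdots \times X_p$ (this is Proposition~\ref{FoertschLytchak}, which is exactly where finite telescopic dimension is used), pass to a finite-index factor-preserving subgroup, and on each irreducible non-Euclidean factor play \pref{MinimalReduced}(iii) against \tref{AdamsBallmann}: a non-trivial amenable normal image would act minimally without fixed point at infinity, hence would stabilise a flat, forcing the factor to be Euclidean. That core is correct (modulo the inheritance of minimality and of the absence of fixed points at infinity by the finite-index subgroup and its projections, which the paper also uses without comment and which you rightly flag). The genuine gap is at the very end: the amenable radical $R$ need not preserve the individual factors $X_i$ --- it may permute isometric ones --- so once you pass to the factor-preserving subgroup $G^\dagger \leq G$, the groups your factor-wise argument controls are the projections of $R \cap G^\dagger$, and what you actually prove is that $R \cap G^\dagger$ acts trivially on $X_1 \times \cdots \times X_p$. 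Your sentence ``$R_i = 1$ for every $i$, which is exactly the statement that $R$ acts trivially on $X_1 \times \cdots \times X_p$'' is therefore unjustified: a priori $R$ could still act non-trivially through the finite quotient $R/(R \cap G^\dagger)$, for instance by permuting isometric non-Euclidean factors.

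The paper closes precisely this gap with its last two sentences: since $R \cap G^*$ lies in the amenable radical of $G^*$ and hence acts trivially on the non-Euclidean part, $R$ acts there through a finite group; the fixed-point set of this finite action is non-empty (circumcentre of a finite orbit), closed and convex, and it is invariant under the projection of $G$ because $R$ is normal in $G$; so $\RR^n \times \mathrm{Fix}$ is a non-empty closed convex $G$-invariant subset of $X$, and minimality of the $G$-action forces it to be all of $X$, i.e.\ $R$ acts trivially on $X_1 \times \cdots \times X_p$. You need to add this (or an equivalent) final step. A second, minor, repair: the continuous image $R_i$ of $R$ in $\Isom(X_i)$ need not be locally compact, so \tref{AdamsBallmann} should be applied to $R \cap G^\dagger$ itself acting on $X_i$ through the (not necessarily faithful) projection, while \pref{MinimalReduced}(iii) is applied to its image as an abstract group of isometries; this is cosmetic, but as written the hypothesis ``amenable locally compact'' is not verified for $R_i$.
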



\subsection*{Acknowledgements}
We would like to thank Viktor Schroeder for fruitful conversations
on affine functions. We are grateful to Anton Petrunin for providing
Example~\ref{ex:Petrunin}. We thank Nicolas Monod for numerous
illuminating conversations and for pointing out that no separability
assumption on $G$ is needed for Theorem~\ref{AdamsBallmann} to hold.
Finally, thanks are due to the referee for his/her comments.

\section{Preliminaries}

\subsection{Geometric and telescopic dimension}\label{sec:dimension}
We recall some facts about dimensions of spaces with upper curvature bounds.  The \textbf{geometric dimension}
(sometimes simply called \emph{dimension}) of such spaces was defined inductively in \cite{Kleiner}, by setting
the dimension of a discrete space to be $0$ and be defining  $\dim (X)= \sup \{ \dim (S_x X) +1 | x\in X  \}$,
where $S_x X$ denotes the space of directions at the point $x$. It turns out that this notion of dimension is
closely related to more topological notions. Namely $\dim (X) \leq n$ if and only if  for all open subsets
$V\subset U$ of $X$ the relative singular homology $H_ k (U,V)$ vanishes for all $k>n$. Moreover, this is
equivalent to the fact that the topological dimension of all compact subsets of $X$ is bounded above by $n$, see
\emph{loc.~cit.}

By definition, a \cat space $X$ is said to have \textbf{telescopic dimension}~$\leq n$ if every asymptotic cone
$\lim_\omega (\vareps_n X, \star_n)$ has geometric dimension~$\leq n$. Although this will not play any role in
the sequel, we remark that the telescopic dimension is a quasi-isometry invariant. Moreover, it follows from
\cite[Th.~C]{Kleiner} that a locally compact \cat space with a cocompact isometry group has finite telescopic
dimension.

Convex subsets inherit the geometric dimension bound from the ambient space. Moreover, if $(X_i,x_i)$ is a
sequence of pointed $\mathrm{CAT}(\kappa )$ spaces of geometric dimension $\leq n$, then their ultralimit
$\lim_{\omega} (X_i,x_i)$ with respect to some ultrafilter is a $\mathrm{CAT}(\kappa )$ space of dimension at
most $n$, see \cite[Lemma 11.1]{Lytchak:rigidity}. In particular, it follows that a \cat space of geometric
dimension $\leq n$ has telescopic dimension~$\leq n$. Furthermore, we have the following.

\begin{prop}\label{dimension}
Let $X$ be a \cat space. If $X$ has telescopic dimension~$ \leq n$,
then the visual boundary $\bd X$ endowed with the Tits metric has
geometric dimension at most $n-1$.
\end{prop}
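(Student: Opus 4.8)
The plan is to realise the Euclidean cone $C(\bd X)$ over the Tits boundary as an isometrically embedded subspace of an asymptotic cone of $X$, and then deduce the dimension bound from the very definition of telescopic dimension. We may assume $X$ is complete; then, after fixing a basepoint $\star \in X$, every $\xi \in \bd X$ is represented by a unique unit-speed geodesic ray $\rho_\xi \colon [0,\infty) \to X$ issuing from $\star$.

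Fix an asymptotic cone $X_\omega = \lim_\omega(\vareps_n X, \star)$ and use the ``polar'' coordinates $(\xi, t) \in \bd X \times [0, \infty)$ on $C(\bd X)$, where $t$ is the distance to the apex. Define
\[
\iota \colon C(\bd X) \longrightarrow X_\omega , \qquad \iota(\xi, t) = \big( \rho_\xi(t/\vareps_n) \big)_n .
\]
This is well defined, it sends the apex to $\star_\omega$, and it preserves distances to the apex. The crux is to verify that $\iota$ preserves all distances. Given points $(\xi_1, t_1)$ and $(\xi_2, t_2)$ of $C(\bd X)$, write $\alpha = \angle_T(\xi_1, \xi_2)$ for the Tits angle, and let $\tilde\angle_\star(p, q)$ denote the Euclidean comparison angle at $\star$ for a geodesic triangle with vertices $\star, p, q$. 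The law of cosines gives, for every $n$,
\[
\vareps_n^2 \, d\big( \rho_{\xi_1}(t_1/\vareps_n),\, \rho_{\xi_2}(t_2/\vareps_n) \big)^2 = t_1^2 + t_2^2 - 2 t_1 t_2 \cos \tilde\angle_\star\big( \rho_{\xi_1}(t_1/\vareps_n),\, \rho_{\xi_2}(t_2/\vareps_n) \big) ,
\]
and, by the \cat inequality, the comparison angle $\tilde\angle_\star(\rho_{\xi_1}(a), \rho_{\xi_2}(b))$ is non-decreasing in each of $a$ and $b$ separately. Hence it has a limit as $a, b \to \infty$, and that limit equals $\lim_{t \to \infty} \tilde\angle_\star(\rho_{\xi_1}(t), \rho_{\xi_2}(t)) = \alpha$. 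Passing to the ultralimit we obtain
\[
d_{X_\omega}\big( \iota(\xi_1, t_1),\, \iota(\xi_2, t_2) \big)^2 = t_1^2 + t_2^2 - 2 t_1 t_2 \cos \alpha ,
\]
which is exactly the squared distance between $(\xi_1, t_1)$ and $(\xi_2, t_2)$ in $C(\bd X)$. (Here one also uses that the Tits metric coincides with the angular metric as long as the latter is $< \pi$, so that the truncation at $\pi$ occurring in the cone distance returns $\alpha$.) Thus $\iota$ is an isometric embedding.

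Both $C(\bd X)$, the Euclidean cone over the \catone space $\bd X$, and $X_\omega$, an ultralimit of \cat spaces, are \cat spaces. Since $X$ has telescopic dimension $\leq n$, by definition $X_\omega$ has geometric dimension $\leq n$; by the characterisation of geometric dimension recalled above, every compact subset of $X_\omega$ then has topological dimension $\leq n$. As $\iota$ is a homeomorphism onto its image, the same holds for every compact subset of $C(\bd X)$, and, invoking that characterisation once more, $\dim C(\bd X) \leq n$. It remains to use the identity $\dim C(Z) = \dim Z + 1$, valid for any \catone space $Z$: the inequality $\geq$ follows by embedding $K \times [1,2]$ in $C(Z)$ for a compact subset $K \subseteq Z$ of maximal topological dimension, while $\leq$ follows from the fact that, away from the apex (a single point), the cone metric is bi-Lipschitz to the product metric on $Z \times (0, \infty)$. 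Taking $Z = \bd X$ yields $\dim \bd X \leq n - 1$.

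The one genuinely delicate point is the distance computation for $\iota$: one must be sure that the comparison angle at $\star$ between two points receding to infinity along $\rho_{\xi_1}$ and $\rho_{\xi_2}$ converges to the Tits angle $\angle_T(\xi_1, \xi_2)$ even when the two points recede at different rates $t_1/\vareps_n$ and $t_2/\vareps_n$; this is exactly what the monotonicity of comparison angles in each variable provides, together with the standard identification of the Tits angle with $\lim_{t \to \infty} \tilde\angle_\star(\rho_{\xi_1}(t), \rho_{\xi_2}(t))$. The remaining ingredients --- the definition of telescopic dimension, the characterisation of geometric dimension through compact subsets, and the behaviour of dimension under Euclidean cones --- are either already recorded above or entirely routine.
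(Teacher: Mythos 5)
Your proposal is correct and takes essentially the same route as the paper: the paper's proof consists precisely of the isometric embedding of the Euclidean cone $C(\bd X)$ into an asymptotic cone of $X$ (quoted from Kleiner's Lemma~10.6) together with the identity $\dim C(\bd X) = \dim \bd X + 1$. You simply reprove the cited embedding (via the monotone convergence of comparison angles to the Tits angle) and the cone--dimension identity by hand, instead of invoking the references.
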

\begin{proof}
Let $o \in X$ be a base point and $C_\omega X$ be the asymptotic cone $\lim_\omega (\frac{1}{n}X, o)$. The
Euclidean cone $C(\bd X)$ embeds  isometrically into $C_\omega X$, see \cite[Lemma 10.6]{Kleiner}. Thus $\dim
(\bd X) = \dim (C (\bd X))-1 \leq \dim (C_\omega X) \leq n$.
\end{proof}

We emphasize that a \cat space $X$ may have finite-dimensional Tits boundary without being of finite telescopic
dimension, even if $X$ is proper. Indeed, consider for instance the positive real half-line and glue at each
point $n \in \mathbb N$ an $n$-dimensional Euclidean ball of radius $n$. The resulting space is proper and \cat,
its ideal boundary consists of a single point, but each of its asymptotic cones contains an infinite-dimensional
Hilbert space.


\medskip%
We shall use a topological version Helly's classical theorem that holds in much greater generality (see
\cite{Dugundji} as well as \cite[\S3]{Farb:Helly} for a related discussion). The following statement is an
immediate consequence of \cite[Proposition~5.3]{Kleiner} since intersections of convex sets are either empty or
contractible.

\begin{lem} \label{kleiner}
Let $X$ be a \cat space of geometric dimension~$\leq n$. Let $\{ U_{\alpha}\}_{\alpha\in A }$ be a finite family
of open convex subsets of $X$. If for each subset $B \subset A$ with at most $n+1$ elements the intersection
$\bigcap_{\alpha \in B} U_{\alpha}$ is non-empty, then $\bigcap _{\alpha \in A} U_{\alpha}$ is non-empty.\qed
\end{lem}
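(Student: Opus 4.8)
The plan is to recognise this as a topological Helly theorem and to deduce it from the homological control available in \cat spaces of bounded geometric dimension. The first step is to check the hypothesis that drives the machinery: for every $B\subseteq A$ the intersection $\bigcap_{\alpha\in B}U_\alpha$ is again open and convex, and a non-empty convex subset of a \cat space is geodesically contractible onto any of its points; hence $\{U_\alpha\}_{\alpha\in A}$ is a finite open cover of its union $U:=\bigcup_{\alpha\in A}U_\alpha$ all of whose non-empty sub-intersections are contractible. For such a ``good cover'' the nerve lemma provides a homotopy equivalence $U\simeq\mathcal N$, where $\mathcal N$ is the simplicial complex on the vertex set $A$ whose simplices are exactly the $B\subseteq A$ with $\bigcap_{\alpha\in B}U_\alpha\neq\emptyset$; the hypothesis says precisely that $\mathcal N$ contains the full $n$-skeleton of the simplex on $A$.

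The second step is the classical Helly induction on $m:=|A|$. For $m\leq n+1$ there is nothing to prove. For $m\geq n+2$, assume the statement for all smaller families and suppose, for contradiction, that $\bigcap_{\alpha\in A}U_\alpha=\emptyset$. Every subfamily of size $\leq n+1$ still satisfies the hypothesis, so the inductive assumption applied to the $m$ subfamilies of size $m-1$ shows that each of them --- hence every proper subfamily --- has non-empty intersection. Thus $\mathcal N$ contains the whole boundary of the $(m-1)$-simplex on $A$ but not its top face, i.e.\ $\mathcal N=\partial\Delta^{m-1}\cong S^{m-2}$, so $H_{m-2}(U;\mathbb Z)\cong H_{m-2}(S^{m-2};\mathbb Z)\neq 0$. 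Since $U$ is open in $X$ and $\dim X\leq n$, the homological characterisation of geometric dimension recalled above (the special case $V=\emptyset$) gives $H_k(U)=0$ for all $k>n$; as $m-2\geq n$, this already yields the contradiction as soon as $m\geq n+3$.

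The remaining case $m=n+2$ is the one I expect to be the genuine obstacle: it amounts to excluding that an open subset of an $n$-dimensional \cat space be homotopy equivalent to $S^n$, and vanishing of \emph{absolute} homology above degree $n$ does not by itself suffice for this. Overcoming it needs the full, relative form of the dimension condition --- vanishing of $H_k(V',V)$ in degrees $k>n$ for \emph{all} open pairs $V\subseteq V'\subseteq X$ --- which is exactly the content that \cite[Proposition~5.3]{Kleiner} is built to supply. In practice, therefore, I would simply invoke \cite[Proposition~5.3]{Kleiner}: its hypotheses reduce, via the contractibility of non-empty convex intersections noted in the first step, to precisely the present situation, and it packages the whole homological argument --- the borderline degree included --- leaving only the routine remark that intersections of convex sets are empty or contractible.
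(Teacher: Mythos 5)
Your proof is correct and ends up taking exactly the paper's route: the paper likewise disposes of the lemma in one line by invoking \cite[Proposition~5.3]{Kleiner} together with the observation that intersections of convex sets are either empty or contractible. Your nerve-lemma/Helly warm-up is a sound explanation of why the result holds and correctly isolates the borderline case $|A|=n+2$ as the part that genuinely needs the relative (Kleiner) form of the dimension bound, but it is commentary rather than a different proof.
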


\subsection{Inner points}  \label{subsecinner}
Following \cite{LytchakSchroeder}, we shall say that a point $o$ of a \cat space $X$ is  a \textbf{topologically
inner} point if $X\setminus \{o \}$ is not contractible. For each topologically inner point there is some
$\vareps >0$ and a compact subset $K$ of $X$ with $d(o,K)\geq \vareps$ with the following property: For each
$x\in X$ there is some $\bar x \in K$ such that $xo\bar x$ is a geodesic. Thus every geodesic segment which
terminates at $o$ may be locally prolonged  beyond $o$; in loose terms, the space $X$ is \emph{geodesically
complete at the point $o$}. In a \cat space which is locally of finite geometric dimension, the set of
topologically inner points is dense, see \cite[Theorem~1.5]{LytchakSchroeder}. In particular it is non-empty.

\section{Jung's theorem} \label{secjung}

\subsection{\cat case}\label{sec:Jung:cat0}
Throughout the paper, we shall adopt the following notational convention. Given a subset $Y \subseteq X$ we
denote the distance to $Y$ by $d_Y$, namely $d_Y : X \to \RR : x \mapsto \inf_{y \in Y} d(x, y)$. We further
recall that the \textbf{intrinsic radius} of a subset $Z$ of a metric space $X$ is defined as
$$\rad(Z) = \inf_{z \in Z} \{r \in \RR_{>0} \; | \; Z \subseteq B(z, r) \}.$$
This notion should not be confused with the \textbf{circumradius} (or \textbf{relative radius}), defined as
$$\rad_X(Z) = \inf_{x \in X} \{r \in \RR_{>0} \; | \; Z \subseteq B(x, r) \}.$$

\smallskip
Bounded closed convex subsets of non-positively curved spaces have the \textbf{finite intersection property}
(see \cite[Proof of Theorem~B]{LangSchroeder} or \cite[Theorem~14]{Monod_superrigid}). This means that for any
family $\{ X_{\alpha} \}_{\alpha \in A}$ of \emph{bounded} closed convex  subsets of a \cat space $X$ the
intersection $\bigcap_{\alpha \in A} X_\alpha$ is non-empty whenever the intersection of each finite sub-family
is non-empty.

\begin{lem} \label{nplus}
Let $X$ be a \cat space of geometric dimension~$\leq n$ and $Y \subseteq X$ be a subset of finite diameter.  If
for all subsets $Y'\subseteq Y$ of cardinality $|Y'| \leq n+1$ we have $\rad_X (Y') \leq r$ then $\rad_X (Y)
\leq r$.
\end{lem}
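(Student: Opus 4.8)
The plan is to reformulate the statement as an intersection property for families of balls and then deduce it by combining the topological Helly theorem recorded in \lref{kleiner} with the finite intersection property for bounded closed convex subsets quoted above. Two elementary observations set this up. First, if a point $x$ lies in $\bigcap_{z\in Z}\overline{B}(z,\rho)$ for some $Z\subseteq X$ and some $\rho>0$, then $Z\subseteq\overline{B}(x,\rho)$, whence $\rad_X(Z)\le\rho$. Second, if $\rad_X(Z)<\rho$, then there is a point $x$ with $\sup_{z\in Z}d(x,z)<\rho$, so that $x\in\bigcap_{z\in Z}B(z,\rho)$, an intersection of \emph{open} convex balls. Applying the second observation to the subsets $Y'\subseteq Y$ with $|Y'|\le n+1$, the hypothesis $\rad_X(Y')\le r$ provides, for every such $Y'$ and every $\varepsilon>0$, a point of $\bigcap_{y\in Y'}B(y,r+\varepsilon)$.

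Next I would fix $\varepsilon>0$. For any finite subset $F\subseteq Y$, the finite family $\{B(y,r+\varepsilon):y\in F\}$ consists of open convex subsets of $X$, a space of geometric dimension $\le n$, and by the previous paragraph every subfamily indexed by at most $n+1$ points of $F$ has non-empty intersection; hence \lref{kleiner} gives $\bigcap_{y\in F}B(y,r+\varepsilon)\neq\emptyset$, and in particular $\bigcap_{y\in F}\overline{B}(y,r+\varepsilon)\neq\emptyset$. Thus $\{\overline{B}(y,r+\varepsilon)\}_{y\in Y}$ is a family of bounded closed convex subsets of $X$ every finite subfamily of which has non-empty intersection, so the finite intersection property produces a point in $\bigcap_{y\in Y}\overline{B}(y,r+\varepsilon)$. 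By the first observation $\rad_X(Y)\le r+\varepsilon$; letting $\varepsilon\to 0$ gives $\rad_X(Y)\le r$, as desired.

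The proof is thus essentially a bookkeeping combination of the two cited facts, and I do not anticipate a genuine obstacle. The only point that needs a little care is the mismatch between the open-set hypothesis of \lref{kleiner} and the fact that the circumradius is an infimum which need not be attained; this is why one works throughout with the slightly enlarged radius $r+\varepsilon$ and lets $\varepsilon\to 0$ only at the very end, rather than attempting to apply Helly's theorem directly with the closed balls of radius $r$. (Incidentally, the hypothesis that $Y$ has finite diameter is never used: it is automatic, since when $n\ge1$ the two-point subsets of $Y$ already have circumradius $\le r$, forcing $\diam(Y)\le 2r$.)
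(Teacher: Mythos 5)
Your proof is correct and follows essentially the same route as the paper's: open balls of a slightly enlarged radius plus \lref{kleiner} to get non-empty finite intersections, then the finite intersection property for bounded closed convex sets to produce a common centre. The only (harmless) difference is bookkeeping: the paper first lets $r'\to r$ to conclude that finite subsets have circumradius $\le r$ and then applies the finite intersection property to closed balls of radius exactly $r$, whereas you keep the radius $r+\varepsilon$ throughout and let $\varepsilon\to 0$ only at the end.
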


\begin{proof}
Fix an arbitrary $r'>r$. For $y\in Y$, denote by $O_y$ the open ball of radius $r'$ around  $y$. These balls are
convex and, by assumption, the intersection of any collection of at most $(n+1)$ such balls is non-empty. By
\lref{kleiner} the intersection of any finite collection of such balls is non-empty. Since $r'>r$ is arbitrary,
this implies that each finite subset $Y'$ of $Y$ has radius at most $r$.  For $y\in Y$, denote now by $B_y$ the
closed ball of radius $r$ around $y$. Then the intersection of each finite collection of $B_y$ is non-empty,
hence the intersection of all $B_y$ is non-empty.  For any point $x$ in this intersection, we get $d(x,y)\leq r$
for all $y\in Y$. Hence $\rad_X (Y) \leq r$.
\end{proof}

\begin{proof}[Proof of Theorem~\ref{raddiam}]
Theorem~A from \cite{LangSchroeder} ensures that for any \cat space $X$ and each subset $Y \subset X$ of
cardinality at most $n+1$, the inequality $ \rad_X (Y) \leq \sqrt {\frac {n} {2(n+1)}} \diam(Y)$ holds. In view
of this, it follows from \lref{nplus} that the inequality $ \rad_X (Y) \leq \sqrt {\frac {n} {2(n+1)}} \diam(Y)$
holds for any subset $Y$ of a \cat space $X$ of geometric dimension~$\leq n$.

Assume conversely that $X$ has geometric dimension~$> n$. By \cite[Theorem~7.1]{Kleiner}, there exist a sequence
$(\lambda_k)$ of positive real numbers such that $\lim_k \lambda_k = \infty$, and a sequence $(Y_k, \star_k)_{k
\geq 0}$ of pointed subsets of $X$ such that
$$\lim_\omega (\lambda_k Y_k, \star_k) =  \RR^{n+1}$$
for any non-principal ultrafilter $\omega$. We may then find $n+2$ sequences $(y_k^i)_{k\geq 0}$ of points of
$Y_k$ indexed by $i \in \{0, 1, \dots, n+1\}$ such that the set $\Delta = \{ \lim_\omega (y_k^i) \; | \; i =0,
\dots, n+1\} $ coincides with the vertex set of a regular simplex of diameter~$1$ in $\RR^{n+1}$. Since the
equality case of the $(n+1)$-dimensional Jung inequality is achieved in the case of $\Delta$, we deduce that
there exists some $k \geq 0$ such that the $n$-dimensional Jung inequality fails for the subset $\Delta_k =
\{y_k^i \; | \; i =0, \dots, n+1\} \subset X$.

\medskip%
Assume now that $X$ has telescopic dimension~$\leq n$ and suppose for a contradiction that for some fixed
$\delta >0$ and  for each integer $k>0$ there is a subset $Y_k \subset X$  such that $\diam(Y_k)>k$ and
$\rad_X(Y_k) \geq (\sqrt{\frac{n}{2(n+1)}} +\delta ) \diam(Y_k)$. Let $\star_k$ be the circumcentre of $Y_k$.
Setting $\lambda_k = \rad_X(Y_k)$, it then follows that the asymptotic cone $\lim_\omega(\frac{1}{\lambda_k} X,
\star_k)$ possesses a subset $\lim_\omega(Y_k)$  which fails to satisfy the $n$-dimensional Jung inequality. The
contradicts the first part of the statement which has already been established.

Assume conversely that $X$ has telescopic dimension~$> n$. Then, by~\cite[Theorem~7.1]{Kleiner} there exists a
sequence $(Y_k, \star_k)_{k \geq 0}$ of pointed subsets of $X$ such that $\lim_\omega (Y_k, \star_k) =
\RR^{n+1}$. In particular $\diam(Y_k)$ tends to $\infty$ with $k$ and we conclude by the same argument as
before.
\end{proof}

\subsection{Rigidity and other curvature bounds}
In this subsection, we briefly sketch  the analogues of \tref{raddiam} in the case of non-zero curvature bounds
and address the equality case.  Since the results are not used in the sequel, we do not provide complete proofs.

Following word by word the proof of \tref{raddiam}  and using the results of  \cite{LangSchroeder} for other
curvature bounds, one obtains the following.

\begin{prop}
Let $X$ be a $\mathrm{CAT}(-1)$ space of geometric dimension at most $n$. Let $Y$ be a subset of $X$ of diameter
at most $D$. Then the radius of $Y$ in $X$ is bounded above by $r_n (D)$, where $r_n(D)$ denote the radius of
the regular $n$-dimensional simplex $\Delta _D$ in the $n$-dimensional real hyperbolic space $\mathbb H ^n$ of
diameter $D$.\qed
\end{prop}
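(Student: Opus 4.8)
The plan is to reproduce, essentially word for word, the argument given above for the \cat case of \tref{raddiam}, with the flat model $\RR^n$ replaced by $\mathbb H^n$ and the Jung constant $\sqrt{n/(2(n+1))}$ replaced by the function $D \mapsto r_n(D)$. Two ingredients are needed: (a) a sharp Jung-type inequality for subsets of cardinality at most $n+1$ in an arbitrary $\mathrm{CAT}(-1)$ space, and (b) a device promoting such a bound for $(n+1)$-point subsets to a bound for arbitrary subsets of a space of geometric dimension $\leq n$. Ingredient (b) requires no new work: \lref{nplus} is stated for \cat spaces of geometric dimension $\leq n$, and every $\mathrm{CAT}(-1)$ space is in particular a \cat space with the same geometric dimension — the inductive definition via spaces of directions makes no reference to the curvature bound, and balls remain convex — so \lref{nplus} (and \lref{kleiner}, on which it rests) applies here verbatim.

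For ingredient (a) I would invoke the $\mathrm{CAT}(-1)$ instance of \cite[Theorem~A]{LangSchroeder}: for any $\mathrm{CAT}(-1)$ space $X$ and any subset $Y' \subseteq X$ with $|Y'| \leq n+1$ one has $\rad_X(Y') \leq r_n(\diam Y')$, the bound being attained by the vertex set of a regular $n$-simplex in $\mathbb H^n$. Since $D \mapsto r_n(D)$ is non-decreasing — a regular $n$-simplex of larger diameter in $\mathbb H^n$ has larger circumradius — this yields $\rad_X(Y') \leq r_n(D)$ for every $Y' \subseteq Y$ with $|Y'| \leq n+1$, whenever $\diam(Y) \leq D$. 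Applying \lref{nplus} with $r = r_n(D)$ then gives $\rad_X(Y) \leq r_n(D)$, which is the assertion.

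The hard part — and the only place where the curvature bound $-1$ genuinely enters, everything else being formal — is the sharp $(n+1)$-point inequality (a). This is exactly the hyperbolic case of the Lang--Schroeder theorem; the crux there is that in $\mathbb H^n$ the configuration maximising the ratio of circumradius to diameter among sets of at most $n+1$ points is still the regular simplex, and, in contrast to the spherical situation, this remains true with no restriction on the diameter, so $r_n(D)$ is well defined and furnishes the bound for all $D \geq 0$. Granting this, no hypotheses beyond those already implicit in \lref{nplus} are needed, and — unlike in \tref{raddiam} — there is no converse direction and no telescopic-dimension variant to prove, so the argument terminates here. An entirely parallel argument, now subject to the appropriate restriction on $\diam(Y)$ forced by the model, gives the analogous statement for \catone spaces with $\mathbb H^n$ replaced by the round sphere $\mathbb S^n$.
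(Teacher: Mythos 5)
Your proposal is correct and is exactly the argument the paper intends: the paper gives no separate proof, saying only that one follows the proof of \tref{raddiam} word for word using the $\mathrm{CAT}(-1)$ case of \cite{LangSchroeder}, which is precisely your combination of the Lang--Schroeder $(n{+}1)$-point bound with \lref{nplus}. Your added remarks (that \lref{kleiner} and \lref{nplus} carry over since $\mathrm{CAT}(-1)$ spaces are \cat, and that monotonicity of $r_n$ is used) are correct and merely make the paper's sketch explicit.
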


In the positively curved case one needs to assume a bound on the radius in order for the balls in question to be
convex. An additional technical difficulty arises from the fact the the whole space may  be non-contractible in
this case, and the statement of \lref{kleiner} has therefore to be slightly modified in that case. The resulting
radius--diameter estimate is the following.

\begin{prop}
Let $X$ be a $\mathrm{CAT}(1)$ space of dimension  $\leq n$. Let $Y$ be a subset of $X$ of circumradius $r
<\frac {\pi} 2$.   Then the diameter of $Y$ is at least $s_n (r)$, where $s_n(r)$ is the diameter of the regular
simplex of radius $r$ in the round $\mathbb S ^n$.
\end{prop}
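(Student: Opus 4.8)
The plan is to follow the proof of \tref{raddiam} above, but to carry out every construction inside a convex ball of radius $<\pi/2$, in order to neutralise the two phenomena peculiar to positive curvature: balls are convex only up to radius $\pi/2$, and $X$ itself may fail to be contractible, so that \lref{kleiner} --- which already fails for $X = \mathbb{S}^1$, as three suitably placed convex arcs of length $<\pi$ show --- must be replaced by its analogue for contractible spaces. We may and do assume $X$ complete. Write $r = \rad_X(Y) < \pi/2$, so that $\diam Y \le 2r < \pi$, and let $r_n$ be the inverse of the increasing function $s_n$, so that $r_n(D)$ is the circumradius of the regular $n$-simplex of diameter $D$ in $\mathbb{S}^n$. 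If $\diam Y$ is so large that no such simplex has circumradius $<\pi/2$, then $\diam Y > s_n(r)$ and there is nothing to prove; so we may assume $\rho := r_n(\diam Y) < \pi/2$. As $s_n$ is increasing, the assertion $\diam Y \ge s_n(r)$ is equivalent to $\rad_X(Y) \le \rho$. Finally fix reals $\rho < \rho' < \pi/2$ and $r < r' < \pi/2$ and a point $x_0 \in X$ with $Y \subseteq B := \overline{B}(x_0, r')$; being a ball of radius $<\pi/2$, the set $B$ is convex, complete, $\mathrm{CAT}(1)$, of geometric dimension $\le n$, and contractible (via geodesic contraction onto $x_0$).

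The core step is to prove that $\rad_X(Y') \le \rho$ for every finite subset $Y' \subseteq Y$, following the argument of \lref{nplus} but performed inside $B$. For subsets of cardinality at most $n+1$ this is the $\mathrm{CAT}(1)$ version of Jung's inequality due to Lang and Schroeder: \cite[Theorem~A]{LangSchroeder}, applied with curvature bound $1$, yields $\rad_X(Y') \le r_n(\diam Y') \le \rho$ whenever $|Y'| \le n+1$, the bound being sharp and attained by regular $n$-simplices in $\mathbb{S}^n$. Now, for $y \in Y$, let $O_y$ be the intersection of $B$ with the open ball of radius $\rho'$ about $y$; this is a convex subset of $B$, so every intersection of such sets is convex, hence empty or contractible. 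If $|Y'| \le n+1$, pick $c \in X$ with $d(c,y) < \rho'$ for all $y \in Y'$ (possible as $\rad_X(Y') \le \rho < \rho'$); then $d(c, B) < \rho' < \pi/2$, so the nearest-point projection $\pi_B(c)$ onto the convex set $B$ is defined, lies in $B$, and satisfies $d(\pi_B(c), y) \le d(c, y) < \rho'$ for $y \in Y'$, whence $\pi_B(c) \in \bigcap_{y \in Y'} O_y$. Thus every $(n+1)$-fold intersection of the family $\{O_y\}_{y \in Y}$ is non-empty; by the analogue of \lref{kleiner} valid in the contractible space $B$ of geometric dimension $\le n$ --- which follows from Kleiner's nerve theorem \cite[Proposition~5.3]{Kleiner} exactly as \lref{kleiner} itself does --- every finite sub-collection of $\{O_y\}_{y \in Y}$ has non-empty intersection. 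Since this holds for every $\rho' \in (\rho, \pi/2)$, we obtain $\rad_X(Y') \le \rho$ for every finite $Y' \subseteq Y$.

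It remains to pass from finite subsets of $Y$ to $Y$ itself. The sets $\overline{B}(y, \rho) \cap B$, for $y \in Y$, are bounded, closed and convex of circumradius $\le \rho < \pi/2$, and they have the finite intersection property: a finite $Y' \subseteq Y$ has a circumcentre $c$, which lies within $\rho$ of each point of $Y'$, so that $\pi_B(c) \in \bigcap_{y \in Y'} \big( \overline{B}(y, \rho) \cap B \big)$. Invoking the $\kappa = 1$ form of the finite intersection property for families of bounded closed convex subsets of circumradius $<\pi/2$ --- again available through \cite{LangSchroeder} --- we conclude that $\bigcap_{y \in Y} \overline{B}(y, \rho)$ is non-empty, and any of its points witnesses $\rad_X(Y) \le \rho = r_n(\diam Y)$, i.e. $\diam Y \ge s_n(r)$, as required. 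The main obstacle, as anticipated, is that Helly's theorem in the form of \lref{kleiner} genuinely fails in non-contractible $\mathrm{CAT}(1)$ spaces, so one must verify carefully that every auxiliary convex body --- the balls $O_y$, the projected points $\pi_B(c)$, and all of their intersections --- stays inside the fixed contractible ball $B$; this is possible precisely because the hypothesis $\rad_X(Y) < \pi/2$ permits $r'$ and $\rho'$ to be chosen $<\pi/2$. The remaining ingredients --- existence and non-expansiveness of nearest-point projections onto convex sets at distance $<\pi/2$, existence of circumcentres, and the limiting passage $\rho' \downarrow \rho$ --- are routine in curvature $\le 1$.
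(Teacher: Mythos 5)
Your proof is correct and takes essentially the approach the paper intends: the paper only sketches this proposition (Lang--Schroeder's \catone Jung inequality for at most $n+1$ points, together with Lemma~\ref{kleiner} ``slightly modified'' because balls are convex and intersections contractible only below radius $\pi/2$), and your device of running the whole argument of Lemma~\ref{nplus} inside the convex, contractible ball $B$ of radius $<\pi/2$, using nearest-point projection to produce witnesses inside $B$, is precisely that modification. The only step you assert without justification is the reduction to complete $X$, but it is harmless: completion changes neither $\rad_X(Y)$ nor $\diam(Y)$ (since $X$ is dense in $\overline{X}$ and both quantities are infima/suprema over $X$), and it preserves the dimension bound, for instance because $\overline{X}$ embeds in the ultraproduct of $X$, which has geometric dimension $\leq n$ by \cite[Lemma 11.1]{Lytchak:rigidity}.
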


\begin{rem}
In a similar way it can be shown, that  the assumption $r=\rad_X (Y)< \frac {\pi} 2$ is fulfilled as soon as
$\diam (Y) < k_n= \arccos (-1/(n+1)) $.
\end{rem}

It is shown in  \cite{LangSchroeder} that for a subset $Y$ of cardinality $\leq n+1$, the equality in
\tref{raddiam} holds if and only if the convex hull of these points is isometric to a regular Euclidean simplex.
Arguing as in the proof of \tref{raddiam} one obtains that if $X$ is locally compact, the inequality becomes an
equality if and only if the convex hull of $Y$ contains a regular  $n$-dimensional Euclidean simplex of diameter
equal to the diameter of $Y$. If $X$ is not locally compact the same statement holds for the convex hull of the
ultraproduct $Y^{\omega} \subset X ^{\omega}$. Similarly, the analogous rigidity statements hold for spaces with
other curvature bounds for the same reasons.

\section{Convex functions and their gradient flow}

\subsection{Gradient flow}\label{sec:gradient}
We recall some basics about gradient flows associated to  convex functions. We refer to \cite{Mayer} for the
general case and to \cite{Lytchak:open} for the simpler case of Lipschitz continuous functions; only the latter
is relevant to our purposes.

Given a \cat space $X$, a map $f : X \to \mathbf{R}$ is called \textbf{convex} if its restriction $f\circ
\gamma$ to each geodesic $\gamma$ is convex. Basic examples of convex functions on \cat spaces are distance
functions to points or to convex subsets, and Busemann functions, see \cite[II.2 and~II.8]{Bridson-Haefliger}.

Let $f$ be a continuous convex  function on a \cat space $X$. For a point $p\in X$, the \textbf{absolute
gradient} of the concave function $(-f)$ at $p$ is defined by the formula

$$|\nabla_p (-f) |= \max \Big\{ 0, \limsup_{x \to p}\frac{f(p) - f(x)}{d(p, x)} \Big\}. $$

The absolute gradient is a non-negative, possibly infinite function. It is bounded above by the Lipschitz
constant if $f$ is Lipschitz continuous. A fundamental object attached to the function $f$ is the
\textbf{gradient flow} which consists of a map $\phi :[0,\infty )\times  X \to X$ which, loosely speaking, has
the property that $\phi_0 = \id$ and $\phi_t(x)$ follows for each $x$ the path of steepest descent of $f$   from
$x$. The gradient flow is indeed a flow in the sense that it satisfies $\phi_{s+t}(x) = \phi_s \circ \phi_t(x)$
for all $x\in X$. The most important property of gradient flows, originally observed by Vladimir
Sharafutdinov~\cite{Sharaf} in the Riemannian context, is that the flow $\phi _t$ is \textbf{semi-contracting}.
In other words, for each $t\geq 0$, the map $\phi _t: X \to X$ is $1$-Lipschitz (see
\cite[Theorem~1.7]{Lytchak:open}). We refer to \cite{Mayer} or \cite[\S9]{Lytchak:open} for more details and
historical comments.

\begin{rem}
Originally, the gradient lines and flows were defined for concave functions by Sharafutdinov~\cite{Sharaf} in
the case of manifolds; they are also commonly used for semi-concave  (but not semi-convex) functions. Moreover
the gradient usually represents the direction of the maximal \emph{growth} of the function rather than its
maximal \emph{decay}. This explains the slightly cumbersome notation $|\nabla _x (-f)|$ that we use here.
\end{rem}

For each $x\in X$ the \textbf{gradient curve} $t\mapsto \phi_t(x)$ of $f$  has the following properties  (and is
uniquely characterised by them).

\renewcommand{\theenumi}{\arabic{enumi}}
\begin{enumerate}
\item The curve $t \mapsto \phi _t (x)$ has velocity $|\phi_t (x)  '| =|\nabla _{\phi _t(x)} (-f)|$ for almost
all $t \geq 0$.

\item The restriction $t\mapsto f(\phi_t (x))$ of $f$ to the gradient curve is convex. Furthermore it satisfies
$(f\circ \phi_t (x))' = -|\nabla _{\phi _t(x)} (-f)|^2$.
\end{enumerate}

We define the   \textbf{velocity of escape} of the flow $\phi_t$ at the  point $x \in X$ by
$$\limsup_{t \to \infty} \frac{d(x, \phi_t(x))}{t}.$$
Since the flow $\phi_t$ is semi-contracting, the lim sup in the above definition may be replaced by a usual
limit. Moreover, it does not depend on the starting point $x$. The following statement is an application of the
main result of \cite{KarlssonMargulis} (to a deterministic setting).

\begin{prop} \label{karlsson}
Let $f$ be a convex Lipschitz function on  a \cat space $X$. If $\vareps =\inf _{x\in X} |\nabla _x  (-f)| >0$
then there is a unique point $\xi_f \in \partial X$ such that  for all $x\in X$ the gradient curve $\phi _t (x)$
defined by $f$ converges to $\xi_f$ for $t\to \infty$.
\end{prop}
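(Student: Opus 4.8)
The key is that the gradient flow $\phi_t$ is a semi-contracting (that is, $1$-Lipschitz) self-map of the complete metric space $X$, and iterating along a divergent sequence of times gives a "cocycle" to which the Karlsson–Margulis theorem applies. More precisely, set $a_t = \phi_t$; then for $s,t\geq 0$ one has $a_{s+t} = a_s\circ a_t$, each $a_t$ is $1$-Lipschitz, and by property~(2) of gradient curves together with the hypothesis $\varepsilon = \inf_x |\nabla_x(-f)| > 0$, the function $f$ decreases along every gradient curve at rate at least $\varepsilon^2$, i.e. $f(\phi_t(x)) \leq f(x) - \varepsilon^2 t$. On the other hand $f$ is $L$-Lipschitz (where $L$ is its Lipschitz constant), so $f(x) - f(\phi_t(x)) \leq L\, d(x,\phi_t(x))$. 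Combining, $d(x,\phi_t(x)) \geq (\varepsilon^2/L)\, t$, so the velocity of escape is at least $\varepsilon^2/L > 0$. Thus the orbit $\phi_t(x)$ is unbounded with positive drift.

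First I would record the elementary estimate above and note that the drift $\ell := \lim_{t\to\infty} d(x,\phi_t(x))/t$ exists, is independent of $x$, and is strictly positive (the existence and base-point independence being standard for semi-contractions, as noted just before the statement). Next I would invoke the Karlsson–Margulis theorem in its deterministic form: given a semi-contracting flow on a complete \cat space with positive drift, there is a point $\xi \in \bd X$ and, for each $x$, a geodesic ray $\sigma_x$ from $x$ to $\xi$ such that $d(\phi_t(x), \sigma_x(\ell t))/t \to 0$ as $t\to\infty$; in particular $\phi_t(x)$ converges to $\xi$ in the visual bordification. A small point to check is that Karlsson–Margulis is stated for iterates of a single semi-contraction or for integer time steps; one reduces to this by fixing a time step $\tau > 0$ and considering the semi-contraction $\psi := \phi_\tau$, whose iterates are $\psi^k = \phi_{k\tau}$. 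Applying their result to $\psi$ gives a ray to some $\xi_\tau$; since $\phi_t$ is $1$-Lipschitz, for $k\tau \leq t \leq (k+1)\tau$ we have $d(\phi_t(x),\phi_{k\tau}(x)) \leq d(\phi_{t-k\tau}(x), x) \leq L\tau$ (again using the Lipschitz bound on $f$ to control the displacement, or simply that the flow moves at bounded speed since $|\nabla(-f)|\leq L$), so the continuous-time orbit stays within bounded distance of the discrete one and converges to the same boundary point; independence of $\tau$ then follows.

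Finally I would address uniqueness of $\xi_f$. Suppose $x$ and $y$ are two starting points; since $\phi_t$ is $1$-Lipschitz, $d(\phi_t(x),\phi_t(y)) \leq d(x,y)$ for all $t$, so the two rays toward $\xi$ obtained from $x$ and $y$ are asymptotic and define the same boundary point — this already shows the limit is independent of the starting point, which is the content of the proposition. For genuine uniqueness of the point (not just well-definedness), one observes that any boundary point $\eta$ to which some orbit converges must satisfy the defining asymptotic property, and the Karlsson–Margulis geodesic is unique up to the usual asymptotic equivalence, pinning down a single point of $\bd X$; alternatively, two distinct limit points would force two orbits starting at the same $x$ to diverge, contradicting determinism of the flow.

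\medskip
The main obstacle I expect is the bookkeeping needed to pass from the discrete-time statement of \cite{KarlssonMargulis} (cocycles of semi-contractions indexed by $\mathbb{N}$) to the continuous-time gradient flow, and to verify that the hypotheses of that theorem — completeness of $X$, the semi-contraction property, and strict positivity of the drift — are met in exactly the form required; the drift positivity is precisely where the quantitative input $f(\phi_t(x)) \leq f(x) - \varepsilon^2 t$ from property~(2) of gradient curves is essential, and it is worth stating cleanly since it is the one genuinely non-formal ingredient. Everything else is soft: the convergence in $\bd X$ and the independence statements follow from the $1$-Lipschitz property of $\phi_t$ by routine arguments.
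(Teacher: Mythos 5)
Your proposal is correct and follows essentially the same route as the paper: the lower bound $f(x)-f(\phi_t(x))\geq \vareps^2 t$ from property (2) combined with the Lipschitz bound on $f$ gives strictly positive velocity of escape, and Karlsson--Margulis is then applied to a time-discretization of the flow (the paper uses $F=\phi_1$), the passage back to continuous time and independence of the starting point being the same routine use of the semi-contraction property and bounded flow speed that you spell out.
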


\begin{rem}
In particular, the existence of a function $f$ as in \pref{karlsson} implies that the ideal boundary of $X$
is non-empty.
\end{rem}

The following construction due to Anton Petrunin shows that the conclusion of Proposition~\ref{karlsson} fails
without a uniform lower bound on the absolute gradient.

\begin{ex}\label{ex:Petrunin}
Choose an acute  angle in $\RR^2$ enclosed by two rays $\gamma ^{\pm} (t) =t \cdot v^{\pm}$ emanating from the
origin. Let $f_n (w)=\la w,x_n \ra$ be linear maps on $\RR^2$  such that the following conditions hold. First,
for all $n$, we require that $\la x_n, v^{\pm }\ra$ be positive. For odd  (resp. even)  $n$, the direction $v^+$
(resp. $v^-$) is between $x_n$ and $v^-$ (resp. $x_n$ and $v^+$).  Moreover, the sequence $(x_n)$ satisfies the
recursive condition $\la x_n,v^- \ra = \la x_{n-1},v^+\ra$ for even $n$ and $\la x_n,v^+ \ra= \la x_{n-1},v^-
\ra$ for odd $n$. Finally, we require that the length $\|x_n\|$ tends to $0$ as $n$ tends to infinity. It is
easy to see that such a sequence $(x_n)$ exists.

Now let $p_1=v^-$ and define inductively $p_n$ on $\gamma ^+$ (resp. $\gamma ^-$) for $n$ even (resp. odd) to be
the point such that $p_n-p_{n-1}$ is parallel to $x_n$.  This just means that  $p_n$ arises from $p_{n-1}$ by
following the gradient flow of the affine (and hence concave) function $f_n$.

Define the numbers $C_n$ by $C_0=0$ and $f_n(p_{n+1})-f_{n+1} (p_{n+1}) = C_{n+1}- C_n$. Consider the concave
function $f(x)=\inf (f_n (x) +C_n)$. One verifies that on the geodesic segment $(p_n, p_{n+1})$ the function $f$
coincides with $f_n$ (in fact on a neighbourhood of all points except $p_{n+1}$). Hence the segment joining
$p_n$ to $p_{n+1}$ is part of a gradient curve of $f$. Therefore the appropriately parametrised piecewise
infinite geodesic $\gamma$ running through all $p_i$ is a gradient curve of $f$. It is clear that both $v^-$ and
$v^+$ (and all unit vectors between them) considered as points in the ideal boundary are accumulation points of
$\gamma$ at infinity.
\end{ex}

\begin{proof}[Proof of Proposition~\ref{karlsson}]
From the assumption that $\vareps =\inf _{x\in X} |\nabla _x (- f)| >0$, we deduce that $f(\phi _t(x))-f(x) \leq
-\vareps ^2 t$ for all $x\in X$. In view of Property~(2) of the gradient curve recalled above and the fact that
$f$ is Lipschitz, we deduce that the velocity of escape of the gradient curve is strictly positive.

An important consequence of \cite[Theorem 2.1]{KarlssonMargulis} is that any semi-contracting map $F:X\to X$ of
a complete \cat space $X$ with strictly positive velocity of escape  $ \limsup_{n \to \infty} \frac{d(p,
F^n(p))}{n}$ has the following convergence  property: There is a unique point $\xi_F$ in the ideal boundary
$\partial X$ of $X$, such that for all $p\in X$ the sequence $p_n =F^n (p)$ converges to $\xi_F$ in the cone
topology. In view of the above discussion, we are in a position to apply this result to $F=\phi_1$, from which
the desired conclusion follows.
\end{proof}

\subsection{Asymptotic slope and a radius estimate}

Finally we recall an observation of Eberlein (\cite{Eberlein}, Section 4.1) about the size of the set of points
in the ideal boundary with negative asymptotic slopes.

Let  $f:X\to \RR$ be a continuous convex function. For each geodesic ray $\gamma:[0,\infty ) \to X $ one defines
the \textbf{asymptotic  slope}  of $f$ on $\gamma$ by $\lim_{t\to \infty} (f\circ \gamma '(t) )$. This defines a
number in $(-\infty, +\infty]$ which depends only on the point at infinity $\gamma(\infty) \in \bd X$. Thus one
obtains a function $slope _f: \bd X \to (-\infty, +\infty]$. One says that a point $\xi \in \bd X$ is
\textbf{$f$-monotone} if $slope _f (\xi) \leq 0$. This is equivalent to saying that the restriction of $f$ to
any ray asymptotic to $\xi$ is non-increasing. One denotes the set of all $f$-monotone points by
$X_f (\infty ).$

\begin{lem} \label{eberle}
Let $f$ be a convex Lipschitz function on  a complete \cat space $X$ such that $\inf _{x\in X} |\nabla _x (-f)|
>0$.
Then for each point $\xi \in X_f(\infty)$, we have $d_{\mathrm{Tits}} (\xi, \xi_f )  \leq \frac \pi 2$, where
$\xi_f$ is the canonical point provided by \pref{karlsson}.
\end{lem}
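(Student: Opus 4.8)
The plan is to reduce the claimed Tits estimate to a bound on Alexandrov angles along the gradient flow, and to control those angles by combining the first variation formula for gradients with the monotonicity of the flow's speed. Throughout, fix a base point $x_0\in X$ and write $\sigma(t)=\phi_t(x_0)$ for the gradient curve of $f$ starting at $x_0$; by \pref{karlsson} one has $\sigma(t)\to\xi_f$. At a point $p=\sigma(t)$ let $u_t\in S_pX$ be the forward direction of $\sigma$ (the direction of steepest descent of $f$), let $w_t$ be the direction of the geodesic ray from $p$ to $\xi$, and let $v_t$ be the direction of the geodesic ray from $p$ to $\xi_f$; put $g_t=|\nabla_{\sigma(t)}(-f)|\ge\vareps$. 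Since $d_{\mathrm{Tits}}(\xi,\xi_f)$ coincides with the angular distance $\angle(\xi,\xi_f)=\sup_{q\in X}\angle_q(\xi,\xi_f)$ as soon as the latter is $<\pi$ (see \cite[II.9]{Bridson-Haefliger}), it suffices to prove $\angle(\xi,\xi_f)\le\pi/2$.

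First I would bound the angle between the flow direction and the direction to $\xi$. Because $\xi\in X_f(\infty)$, the function $f$ is non-increasing along the ray from $p=\sigma(t)$ to $\xi$, so its one-sided derivative satisfies $d_pf(w_t)\le 0$. On the other hand, the first variation formula for the gradient of the convex function $f$ (see \cite{Lytchak:open}, \cite{Mayer}) gives $d_pf(w)\ge -g_t\cos\angle_p(u_t,w)$ for every direction $w$, since $u_t$ is the direction of steepest descent and $g_t$ its magnitude. Taking $w=w_t$ and using $g_t\ge\vareps>0$ yields $\cos\angle_{\sigma(t)}(u_t,w_t)\ge 0$, that is, $\angle_{\sigma(t)}(u_t,w_t)\le\pi/2$ for every $t$.

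Next I would show that the flow direction $u_t$ becomes asymptotically radial, i.e.\ that it aligns with $v_t$. The key point is that $g_t$ is non-increasing: by Property~(2) above the function $t\mapsto f(\sigma(t))$ is convex with derivative $-g_t^2$, so $g_t$ decreases to some $g_\infty\ge\vareps$. For $t'>t$, convexity of $f$ along $[\sigma(t),\sigma(t')]$ bounds the one-sided derivative of $f$ in the direction $z_{t,t'}$ towards $\sigma(t')$ by the secant slope, while the first variation formula bounds it below by $-g_t\cos\angle_p(u_t,z_{t,t'})$; comparing these and using $d(\sigma(t),\sigma(t'))\le\int_t^{t'}g_r\,dr$ together with the monotonicity of $g$ gives $g_t\cos\angle_{\sigma(t)}(u_t,z_{t,t'})\ge g_{t'}$. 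Letting $t'\to\infty$, so that $z_{t,t'}\to v_t$, one obtains $\cos\angle_{\sigma(t)}(u_t,v_t)\ge g_\infty/g_t$, hence $\angle_{\sigma(t)}(u_t,v_t)\le\arccos(g_\infty/g_t)\to 0$. Combining this with the previous paragraph via the triangle inequality for angles in the CAT(1) space $S_{\sigma(t)}X$ gives $\angle_{\sigma(t)}(\xi,\xi_f)\le\pi/2+\arccos(g_\infty/g_t)$, so that $\limsup_{t\to\infty}\angle_{\sigma(t)}(\xi,\xi_f)\le\pi/2$.

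Finally I would pass from these angles based at the receding points $\sigma(t)$ to the angular (hence Tits) distance. Here one uses that along any geodesic ray $\rho$ with $\rho(\infty)=\xi_f$ the Alexandrov angle $\angle_{\rho(r)}(\xi,\xi_f)$ is non-decreasing in $r$ and converges to $\angle(\xi,\xi_f)$ (see \cite[II.9]{Bridson-Haefliger}). Since the Karlsson--Margulis convergence underlying \pref{karlsson} provides that $\sigma$ tracks such a ray sublinearly, one expects $\liminf_{t\to\infty}\angle_{\sigma(t)}(\xi,\xi_f)\ge\angle(\xi,\xi_f)$, which together with the previous bound gives $\angle(\xi,\xi_f)\le\pi/2$ and the lemma. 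I expect this last transfer --- from base points on the gradient curve to the exact ray towards $\xi_f$, i.e.\ justifying that the angle seen from $\sigma(t)$ increases to the \emph{full} Tits angle and not merely to the smaller angle visible from $x_0$ --- to be the main obstacle, and the step where the specific radial nature of the Karlsson--Margulis limit point $\xi_f$ must be used in an essential way.
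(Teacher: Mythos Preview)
Your step~4 is a genuine gap, and the fix you sketch does not go through. The Karlsson--Margulis input behind \pref{karlsson} yields only convergence $\sigma(t)\to\xi_f$ in the cone topology, not sublinear tracking of a geodesic ray. More seriously, even granting such tracking, the function $q\mapsto\angle_q(\xi,\xi_f)$ is merely \emph{upper} semicontinuous (\cite[II.9.2]{Bridson-Haefliger}), so proximity of $\sigma(t)$ to a point $\rho(r)$ on the ray would only give $\limsup\angle_{\sigma(t)}(\xi,\xi_f)\le\angle_{\rho(r)}(\xi,\xi_f)$---the wrong inequality for bounding $\angle(\xi,\xi_f)=\sup_q\angle_q(\xi,\xi_f)$ from above. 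In short, controlling the angle at the moving basepoints $\sigma(t)$ alone cannot give the Tits bound, and your steps~2--3, while correct, lead into this dead end.

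The remedy, which is essentially the Eberlein argument the paper invokes, is to exploit your step~1 at \emph{every} basepoint and discard steps~2--4. Your step~1, combined with the first variation formula for the Busemann function $b_\xi$, shows that $t\mapsto b_\xi(\phi_t(q))$ is non-increasing for every $q\in X$, since $\tfrac{d}{dt}b_\xi(\phi_t(q))=-g_t\cos\angle_{\phi_t(q)}(u_t,w_t)\le 0$. Now fix $q$, let $\rho_q$ be the ray from $q$ to $\xi_f$, and let $m_t(r)$ be the point at distance $r$ from $q$ on the segment $[q,\phi_t(q)]$. Convexity of $b_\xi$ gives $b_\xi(m_t(r))\le\max\{b_\xi(q),b_\xi(\phi_t(q))\}\le b_\xi(q)$; since $\phi_t(q)\to\xi_f$ in the cone topology one has $m_t(r)\to\rho_q(r)$, whence $b_\xi(\rho_q(r))\le b_\xi(q)$ for all $r\ge 0$. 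Thus the convex function $b_\xi\circ\rho_q$ never exceeds its initial value, so its initial slope $-\cos\angle_q(\xi,\xi_f)$ is $\le 0$, i.e.\ $\angle_q(\xi,\xi_f)\le\pi/2$. As $q$ is arbitrary, $\angle(\xi,\xi_f)\le\pi/2$ and hence $d_{\mathrm{Tits}}(\xi,\xi_f)\le\pi/2$.
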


\begin{proof}
Eberlein's argument for the proof of \cite[Proposition 4.1.1]{Eberlein} (which is also reproduced in the proof
of \cite[Theorem~1.1]{Nagano}) shows, that for any $p\in X$ and any sequence $t_i$, such that $\phi _{t_i} (p)$
converges to some point $\xi \in \partial X$, the Tits-distance between $\xi$ and any other point $\psi \in X$
is at most $\frac \pi 2$.
\end{proof}

\subsection{The space of convex functions} \label{spacefunction}
Pick a base point $o\in X$. Denote by $\mathcal{C}_0$ the set of all $1$-Lipschitz convex functions $f$ on $X$
with $f(o)=0$. We view it as subset of the locally convex topological vector space $\mathcal B$ of all functions
$f$ on $X$ with $f(o)=0$, where the latter is considered with the topology of pointwise convergence.  The subset
$\mathcal{C}_0$ may thus be considered as a closed subset of the infinite product  $\prod _{x\in X} I_x$, where
$I_x$ is the interval $I_x =[-d(o,x),d(o,x)]$. Since a convex combination of convex $1$-Lipschitz functions is
convex and $1$-Lipschitz, the set $\mathcal{C}_0$ is a convex compact subset of $\mathcal B$.

The isometry group $G = \Isom(X)$ of $X$ acts continuously on $\mathcal B$ by $g\cdot f: x \mapsto f(gx)-f(go)$
and preserves the subset $\mathcal{C}_0$.  Consider the map $i:X\to \mathcal{C}_0$ given by $i(x) := \bar d _x$,
where $\bar d_ x$ is the normalized distance function $\bar d _x (y)= d(x,y)-d(x,o)$. Note that the map $i$ is
$G$-equivariant. In particular, the subset $\mathcal C=i(X) \subset \mathcal{C}_0$ as well as its closure and
closed convex hull are $G$-invariant.  If $X$ is locally compact, then $\overline{\mathcal C}$ consists
precisely of normalized distance and Busemann functions on $X$, and is thus nothing but the visual
compactification of $X$. However, if $X$ is not locally compact, then $\bar {\mathcal C}$ may be much larger,
and the convergence in $\mathcal{C}_0$ may be rather strange.

\begin{ex}
Let $X'$ be a separable Hilbert space with origin $o$ and an orthonormal base $\{e_n\}_{n \geq 0}$. Then the
sequence $\bar d_{ne_n}$ converges in $\mathcal{C}_0$ to the constant function.
\end{ex}

\begin{ex}
Let $X''$ be a metric tree consisting of a single vertex $o$ from which emanate countably many infinite rays
$\eta_n$. In other words $X''$ is the Euclidean cone over a discrete countably infinite set. Let $b_n$ denote
the Busemann function associated with $\eta _n$. Then $b_n$ converge in $\mathcal{C}_0$ to the distance function
$d_o$.
\end{ex}

We emphasize that the choice of the base point $o$ does not play any role: any change of base point amounts to
adding an additive constant.

In some sense, the set $\mathcal C$ may serve in the non-locally compact case as a generalized ideal boundary.
It is therefore important to understand how ``large'' it really is. This will be the purpose of the next
subsection.

\subsection{Affine functions on spaces of finite telescopic dimension}

Recall that a function $f:X\to \RR$ is called \textbf{affine} if its restriction to any geodesic is affine.
Equivalently, for all pairs $x^+,x^- \in X$ with midpoint $m$ we have $f(x^+) +f(x^-)=2 f(m)$. A simple-minded
but noteworthy observation is that affine functions are precisely those convex functions $f$ whose opposite
$(-f)$ is also convex. Clearly, constant functions are affine; thus any \cat space admit affine functions.
However, the very existence of \emph{non-constant} affine functions imposes very strong restrictions on the
underlying space, see~\cite{LytchakSchroeder}. The following result also provides an illustration of this
phenomenon, which will be relevant to the proof of Theorem~\ref{AdamsBallmann}.

\begin{prop}\label{LytchakSchroeder}
Let $X$ be a \cat space of finite telescopic dimension which is not reduced to a single point and such that
$\Isom(X)$ acts minimally. If $\overline{ \mathcal{C}}$ contains an affine function, then there is a splitting
$X = \RR \times X'$.
\end{prop}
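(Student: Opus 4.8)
The plan is to use the affine function $f$ to produce a geodesic line in $X$, and then to let minimality propagate it over the whole space. Concretely, once $X$ contains a line $\ell$ whose pair of endpoints $\{\xi,\xi'\}\subset\bd X$ is canonically attached to $f$, the parallel set $P(\ell)$ of all lines joining $\xi'$ to $\xi$ is non-empty, closed, convex and $\Isom(X)$-invariant; by minimality $P(\ell)=X$, and since $P(\ell)$ splits isometrically as $\RR\times P'$ this yields $X=\RR\times P'$. A point requiring care here is that a single $f$ need not be $\Isom(X)$-fixed; one arranges invariance by working with the $\Isom(X)$-invariant compact convex set of \emph{all} affine functions in $\overline{\mathcal{C}}$, or with the canonical pair of boundary points they jointly determine.

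First I would make two preliminary reductions. $X$ is unbounded: a bounded complete \cat space has an $\Isom(X)$-fixed circumcentre, so minimality would collapse $X$ to a point. Next I would reduce to $f$ non-constant. Since every member of $\mathcal{C}_0$ vanishes at the base point $o$, the only constant function in $\overline{\mathcal{C}}$ is $0$, so it suffices to show $0\notin\overline{\mathcal{C}}$. If $\bar d_{x_\alpha}\to 0$ along a net then necessarily $d(o,x_\alpha)\to\infty$ (for $x_\alpha$ in a fixed ball $B(o,R)$ one has $\bar d_{x_\alpha}(y)\ge d(o,y)-2R$, which cannot tend to $0$ once $d(o,y)>2R$), and I would show that normalised distance functions based at $o$ can degenerate to $0$ in this way only if the space of directions $S_oX$, hence by \cite[Lemma~10.6]{Kleiner} the asymptotic cone of $X$, is infinite-dimensional; this contradicts the finiteness of the telescopic dimension. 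So from now on $f$ is non-constant, $1$-Lipschitz and affine, and $-f$ is then likewise convex and $1$-Lipschitz.

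To produce the line I would use the gradient-flow machinery together with \tref{filtering}. First, $\vareps:=\inf_{x\in X}|\nabla_x(-f)|>0$ (and symmetrically $\inf_x|\nabla_x(f)|>0$): by convexity of $f$ the zero set of $|\nabla(-f)|$ is exactly the set of minima of $f$, and the sublevel sets $\{f\le c\}$ form a filtering family of closed convex subsets, so \tref{filtering} applies, the alternative that only the visual boundaries meet being ruled out using minimality and the finite-dimensionality of $\bd X$ (\pref{dimension}). Granting this, \pref{karlsson} produces points $\xi,\xi'\in\bd X$ to which all descent curves of $f$, respectively of $-f$, converge. Since $f$ is affine, the asymptotic slope $\mathrm{slope}_f$ is a well-defined function $\bd X\to[-1,1]$, every boundary point is $f$-monotone or $(-f)$-monotone, and \lref{eberle} confines every $f$-monotone point to the Tits ball of radius $\pi/2$ about $\xi$ and every $(-f)$-monotone point to the Tits ball of radius $\pi/2$ about $\xi'$. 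Exploiting in addition that $\mathrm{slope}_f$ is affine along Tits geodesics in $\bd X$, the non-constancy of $f$ exhibits $\bd X$ as a spherical suspension with poles $\xi,\xi'$, so $d_{\mathrm{Tits}}(\xi,\xi')=\pi$. Finally, to pass from this antipodal pair to an honest line, I would apply \tref{filtering} once more, now to the sublevel sets of the convex function $b_\xi+b_{\xi'}$ (bounded below since $d_{\mathrm{Tits}}(\xi,\xi')=\pi$): if it attains its minimum, the minimal set is the parallel set sought; the other alternative is excluded using minimality, the finite-dimensionality of $\bd X$, and the density of topologically inner points, applied after restricting to a suitable finite-dimensional convex subset.

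The main obstacle, I expect, is the uniform lower bound $\inf_X|\nabla_x(-f)|>0$ for the non-constant affine $f$: the set of minima of $f$ is closed and convex but \emph{not} $\Isom(X)$-invariant, so minimality cannot be invoked for it directly, and the argument must genuinely use that $f$ is affine rather than merely convex, and that $\bd X$ is finite-dimensional --- this is precisely where the passage from finite \emph{telescopic} dimension of $X$ to finite \emph{geometric} dimension of $\bd X$ does its work. Secondary difficulties are the bookkeeping in the second reduction (that $\bar d_{x_\alpha}\to 0$ is incompatible with finite telescopic dimension) and the verification that the pair $\{\xi,\xi'\}$, hence the parallel set, can be made $\Isom(X)$-invariant so that minimality really gives $P(\ell)=X$. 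An alternative to the slope/suspension discussion would be to invoke the structure theory of \cat spaces carrying non-constant affine functions from \cite{LytchakSchroeder} directly, reducing the proposition to checking that finite telescopic dimension together with minimality supplies the regularity their results require.
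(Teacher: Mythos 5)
Your overall shape (gradient flow of the affine $f$, Karlsson--Margulis via \pref{karlsson}, production of a geodesic line, then a splitting) matches the paper's second half, but the crux is exactly the step you flag as the ``main obstacle'', and your sketch for it does not work. To show $\inf_{x}|\nabla_x(-f)|>0$ you propose to look at the zero set of the gradient (the argmin of $f$) and to apply \tref{filtering} to the sublevel sets $\{f\le c\}$, ruling out the boundary alternative ``by minimality''. But the sublevel sets of a single $f$ are not $\Isom(X)$-invariant, so minimality gives you no leverage on them (and note the direction problem: if the sublevel sets all met, $f$ would attain a minimum and the infimum of the gradient would be $0$). More fundamentally, even a proof that $f$ attains no minimum would only show the gradient never vanishes, not that it is \emph{uniformly} bounded below, which is what \pref{karlsson} needs. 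The paper's mechanism is different and is not the finite-dimensionality of $\bd X$: it is Lemma~\ref{lem:Q}, a coarse substitute for topologically inner points extracted from an inner point of the asymptotic cone, combined crucially with the hypothesis that $f\in\overline{\mathcal{C}}$, i.e.\ is a pointwise limit of normalized distance functions. Because the sets $Q_j$ of Lemma~\ref{lem:Q} are finite, the distance-function estimate $\bar d_s(q)-\bar d_s(p_j)\ge D_j$ passes to the limit $f$, showing that the sets $C_j$ (defined using \emph{all} affine functions of $\overline{\mathcal{C}}$, hence $\Isom(X)$-invariant, closed and convex) are non-empty; minimality then forces $C_j=X$, which yields $|\nabla_p f|=1$ at \emph{every} point. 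Your proposal never uses membership in $\overline{\mathcal{C}}$ at this stage, and affineness plus minimality alone cannot replace it.

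Two secondary points. Your reduction to non-constant $f$ argues that degeneration $\bar d_{x_\alpha}\to 0$ forces $S_oX$ to be infinite-dimensional and cites \cite[Lemma~10.6]{Kleiner}; that lemma concerns the embedding of the Tits cone into the asymptotic cone, not spaces of directions, and finite telescopic dimension does not bound $\dim S_oX$ (glue bounded pieces of arbitrarily large dimension along a line). In the paper this reduction is unnecessary: the conclusion $|\nabla_p f|=1$ for every $f\in\mathcal{A}$ already excludes constants. Finally, your endgame via the parallel set of the pair $\{\xi,\xi'\}$ requires that pair to be $\Isom(X)$-invariant, which a single $f$ does not provide, as you note; the paper sidesteps equivariance entirely: since the gradient curve has unit speed it is itself a geodesic ray, so $-f$ is the associated Busemann function, hence again belongs to $\mathcal{A}$ and produces the opposite ray; the concatenated line $\gamma$ satisfies $(f\circ\gamma)'=1$ and \cite[Lemma~4.1]{LytchakSchroeder} then gives the splitting $X=\RR\times X'$ directly, with no invariance of $\xi,\xi'$ and no suspension/slope analysis of $\bd X$ needed.
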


Recall from \cite[II.6.15(6)]{Bridson-Haefliger} that any complete \cat space $X$ admits a canonical splitting
$X=\mathbb{E} \times X'$ preserved by all isometries, where $\mathbb{E}$ is a (maximal) Hilbert space called the
\textbf{Euclidean factor} of $X$. It is shown in \cite[Corollary~4.8]{LytchakSchroeder} that if $X$ is locally
finite-dimensional and if $\Isom(X)$ acts minimally, then $X'$ does not admit \emph{any} non-constant affine
function. The main technical point in the proof of the latter fact is the existence  of inner points (see
\S\ref{subsecinner}).

In order to deal with the case of \emph{asymptotic} dimension bounds, we need to substitute this by some coarse
equivalent. This substitute is provided by Lemma~\ref{lem:Q}, which is of technical nature. In the special case
of spaces of finite \emph{geometric} dimension, it follows quite easily from the existence of inner points ;
therefore, the reader who is only interested in those spaces may wish to skip it.

\begin{lem}\label{lem:Q}
Let $X$ be an unbounded space of finite telescopic dimension. 
Then
there are sequences of positive numbers $D_j \to \infty$, $\delta_j \to 0$ and sequences of points $p_j \in X$
and of finite subsets $Q_j \subset X$ with the following two properties.
\begin{itemize}
\item[(1)] $Q_j$ is contained in the ball of radius $D_j (1+\delta _j)$ around $p_j$.

\item[(2)] For all $s \in X$, there is some $q_j \in Q_j$ with $d(s, q_j)-d(s,p_j) \geq D_j$.
\end{itemize}
\end{lem}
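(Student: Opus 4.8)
Plan for proving Lemma 4.?? (the $D_j$, $\delta_j$, $p_j$, $Q_j$ statement)The plan is to exploit the radius--diameter estimate of Theorem~\ref{raddiam} together with a contradiction/compactness argument taking place in an asymptotic cone. Let $n$ be the telescopic dimension of $X$, so that by Theorem~\ref{raddiam} there is, for every $\delta>0$, a threshold $D=D(\delta)$ such that any bounded subset $Y$ of diameter $>D$ satisfies $\rad_X(Y)\leq\big(\delta+\sqrt{\tfrac{n}{2(n+1)}}\big)\diam(Y)$. The point of the lemma is to produce, at arbitrarily large scales, a finite set $Q_j$ which is ``spread out'' in the sense of property~(2): for every $s\in X$ there is a point of $Q_j$ whose normalized distance function at $p_j$ takes value at least $D_j$. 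I would first reformulate property~(2) in terms of the space $\overline{\mathcal C}$: pick a basepoint, say $p_j$ itself, and observe that $d(s,q)-d(s,p_j)=\bar d_q(s)$ where $\bar d_q\in\mathcal C\subset\overline{\mathcal C}$ is the normalized distance function to $q$. So property~(2) says that $\sup_{q\in Q_j}\bar d_q(s)\geq D_j$ for all $s$, i.e. the ``upper envelope'' of the finite family $\{\bar d_q : q\in Q_j\}$ is bounded below by $D_j$ on all of $X$; since each $\bar d_q(p_j)=0$ this is a nontrivial uniform statement.

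The heart of the argument is the following dichotomy, run at a fixed large scale $R$. Either there exists a point $p$ and a finite set $Q\subset B(p,R(1+\delta))$ with the envelope property (2) with $D_j=R$, and we are done for that scale; or no such configuration exists. In the latter case I want to derive that $X$ must fail to have telescopic dimension $\leq n$, contradicting the hypothesis. Concretely: suppose that for some $\delta>0$ and some sequence $R=R_j\to\infty$ there is \emph{no} admissible $(p_j,Q_j)$. Fix any large ball $B(p,R)$. The failure of (2) means that for every finite $Q\subset B(p,R(1+\delta))$ there is a ``bad'' point $s=s_Q\in X$ with $d(s_Q,q)<d(s_Q,p)+R$ for all $q\in Q$, i.e. $Q\subseteq B(s_Q, d(s_Q,p)+R)$. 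Passing to larger and larger finite subsets $Q$ exhausting a suitable net in $B(p,R(1+\delta))$ and using the finite intersection property of bounded closed convex sets (recalled in \S\ref{subsecinner}), I would extract a single point $\bar s$ and a radius $\rho$ with $B(p,R(1+\delta))\subseteq B(\bar s,\rho)$ and $\rho$ close to $d(\bar s,p)+R$ — so the whole ball $B(p,R(1+\delta))$ of diameter $\approx 2R(1+\delta)$ is covered by a ball whose radius is only about $d(\bar s,p)+R$. Comparing this with the circumradius of a ball in a \cat space (which is essentially its own radius; here one uses geodesic completeness or the fact that $\rad_X(B(p,r))\geq$ something proportional to $r$ once one rescales), one gets that an asymptotic cone of $X$ contains a ball whose circumradius is too small relative to its diameter — more precisely, rescaling by $1/R_j$ and taking an ultralimit produces a bounded subset $Y_\infty$ of $C_\omega X$ with $\rad(Y_\infty)/\diam(Y_\infty)$ strictly below the Jung bound $\sqrt{n/(2(n+1))}$ in a way that forces $C_\omega X$ to contain a regular $(n+1)$-simplex, hence $\dim C_\omega X\geq n+1$. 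This contradicts telescopic dimension $\leq n$, via Theorem~\ref{raddiam} (the asymptotic, ``$\delta$-perturbed'' half of it). Choosing $\delta_j\to 0$ slowly enough along the sequence $R_j\to\infty$ and setting $D_j=R_j$, $p_j$, $Q_j$ as produced at each scale, we obtain the desired sequences.

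The main obstacle will be making the contradiction step quantitatively airtight: one must choose the exhausting finite subsets $Q$, the perturbation parameters $\delta_j$, and the rescaling factors so that the limiting subset $Y_\infty\subset C_\omega X$ genuinely violates the Jung inequality of Theorem~\ref{raddiam}. In particular one has to control two competing errors — the slack $d(\bar s,p)$ between $\bar s$ and the center $p$ (which could in principle be large, making the covering ball $B(\bar s,\rho)$ uninformative) and the finiteness of $Q$ versus the continuum of points in $B(p,R(1+\delta))$. The first issue I would handle by a centering argument: among all ``bad'' configurations, push $\bar s$ to (near-) circumcenter position, using convexity of $d(\cdot,\bar s)$ to argue that one may assume $d(\bar s,p)=O(\delta_j R_j)=o(R_j)$; otherwise the covering statement already contradicts the definition of circumradius at scale $R_j$. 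The second is routine separability/net reasoning inside bounded balls combined with the finite intersection property. Everything else — equivariance, the precise bookkeeping of $\bar d_q(s)=d(s,q)-d(s,p_j)$, and the diagonal extraction of the sequences — is formal. As noted in the statement of the lemma, in the finite \emph{geometric}-dimensional case all of this can be shortcut using density of topologically inner points (\S\ref{subsecinner}): one takes $p_j$ inner, prolongs geodesics from $p_j$ in enough directions to build $Q_j$ directly on a sphere of radius $D_j$ around $p_j$, and property~(2) is then immediate from the triangle inequality and geodesic completeness at $p_j$.
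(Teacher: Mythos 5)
Your strategy has a genuine gap at its core: the contradiction you want to derive from the failure of property~(2) at an arbitrarily fixed $p$ simply is not there, because such a failure is perfectly consistent with finite telescopic dimension. Take $X$ to be the Euclidean half-plane $\{(x,y)\,:\,y\ge 0\}$ with $p=(0,0)$ (or the ray $[0,\infty)$ with $p=0$): for any finite $Q\subset B(p,R(1+\delta))$ and $s=(0,t)$ with $t$ large, one has $d(s,q)-d(s,p)\to -q_y\le 0$, so (2) fails for every finite $Q$ and every $D>0$, although $X$ is two-dimensional. This example breaks your argument in two places. First, the ``bad'' points $s_Q$ escape to infinity, so no limit point $\bar s\in X$ can be extracted; the finite intersection property you invoke does not apply anyway, since the sets $\{s\,:\,\max_{q\in Q}(d(s,q)-d(s,p))\le R\}$ are neither bounded nor convex (differences of distance functions are not convex). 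At best one obtains a limit in $\overline{\mathcal C}$, i.e.\ a $1$-Lipschitz convex function $f$ with $f\le f(p)+R$ on $B(p,R(1+\delta))$ --- for instance a Busemann function --- and this carries no contradiction. Second, even granting a genuine point $\bar s$ with $B(p,R(1+\delta))\subseteq B(\bar s,\,d(\bar s,p)+R)$, you would only have produced a set whose circumradius is \emph{small} relative to its diameter, and that never contradicts an upper bound on dimension: Theorem~\ref{raddiam} is violated only by sets whose radius is too \emph{large} compared to their diameter, and the regular-simplex rigidity occurs at equality, not below the bound. So the asserted violation of the Jung inequality in an asymptotic cone, and with it the whole dichotomy, does not materialize; your proposed ``centering'' of $\bar s$ cannot be carried out in the examples above.

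What is missing is exactly the main content of the lemma, namely a good choice of the points $p_j$: the statement is false for badly placed $p$ (boundary point of the half-plane), so no argument that fixes an arbitrary $p$ can succeed. The paper's proof makes this choice by passing to the asymptotic cone $\tilde X=\lim_\omega(\frac 1n X,o)$, which has finite geometric dimension by the telescopic-dimension hypothesis, and taking for $\tilde p=(p_n)$ a topologically inner point of $\tilde X$ (\S\ref{subsecinner}); the associated compact set $K$ at distance $\vareps$ from $\tilde p$ is approximated by finite subsets $Q_n\subset X$ at scale $n$, which yields property~(2) first for points $s$ with $d(s,p_n)\le n$, and then for all $s\in X$ by an explicit \cat comparison (law of cosines) along the geodesic from $p_n$ to $s$. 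Theorem~\ref{raddiam} plays no role in this lemma (it is used later, in the gradient estimate). Your closing remark about inner points addresses only the finite geometric-dimension case and is offered as an aside; for the actual statement about telescopic dimension the inner point must be produced in the rescaled limit, and that step --- together with the comparison argument propagating the estimate to faraway $s$ --- is absent from your proposal.
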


\begin{proof}
%
Consider $\tilde X= \lim _{\omega} (\frac 1 n X, o)$ and let $\tilde p =(p_n)$ be  an inner point of
$\tilde{X}$. Let $\vareps>0$ and the compact subset  $K\subset \tilde X$ be chosen as in \S\ref{subsecinner}.
Moving points of $K$ towards $\tilde p$ we may assume that all point of $K$ have distance $\vareps$ to $\tilde
p$. Furthermore, there is no loss of generality in assuming $\vareps < 1$.

Since $K$ is compact, there exist finite subsets $Q_n \in \frac 1 n X$ with $\lim _{\omega } Q_n =K$ and
$d(p_n,q)\leq \vareps n$ for all $q\in Q_n$.

In view of the defining property of $K$, we deduce that for all $\delta \in (0, \vareps )$ and all $n_0$, there
is some $n = n(\delta, n_0)> n_0$ such that for any $s \in X$ with $d(s, p_n) \leq n$, there is some $q \in Q_n$
with $d(s, q) \geq d(s, p_n) + n (\vareps - \delta)$.

Assume now  $\delta \in (0, \frac \vareps 2)$. Given $\tilde s \in X$ with $d(\tilde s ,p_n)\geq n$ and choose
the point $s$ between $p_n$ and $\tilde s$ with $d(p_n,s)=n$. Let $q \in Q_n$ be such that $d(s, q) \geq d(s,
p_n) + n (\vareps - \delta)$. Using the \emph{law of cosines} in a comparison triangle for $\Delta(\tilde s,
p_n, q)$, we deduce from that \cat inequality that
$$\frac{d(\tilde s, q)^2 - d(\tilde s, p_n)^2 - d(p_n, q)^2}{d(\tilde s, p_n)} \geq
\frac{n^2(\vareps - \delta)(2 + \vareps - \delta) - d(p_n, q)^2}{n}.$$
Since $d(p_n, q) \leq \vareps n$ and $d(\tilde{s}, p_n) = n + d(s, \tilde s)$, we deduce
$$\begin{array}{rcl}
d(\tilde s, q)^2 - d(\tilde s, p_n)^2 & \geq & n^2(\vareps - \delta)(2 + \vareps - \delta) + 2 n
( \vareps - \delta(1 + \vareps) + \frac{\delta^2}  2) d(s, \tilde s)\\
& \geq &  n^2(\vareps - 2 \delta)(2 + \vareps - 2 \delta) + 2n (\vareps - 2 \delta) d(s, \tilde s) \\
& = & n^2(\vareps - 2 \delta)^2 + 2n(\vareps - 2\delta) d(\tilde s, p_n).
\end{array}
$$
This implies that $d(\tilde s,q) \geq d(\tilde s, p_n) + n(\vareps- 2 \delta )$.

\medskip%
It remains to define the desired sequences by making the appropriate choices of indices. This may be done as
follows. For each integer $j
>0$, we now set $\delta_j = \frac{\vareps}{2j}$ and $n_j = n(\frac{\vareps \delta_j}{4}, j)$, where
$n: (0, \vareps) \times \mathbf{N} \to \mathbf{N}: (\delta, n_0) \mapsto n(\delta, n_0)$ is the map considered
above. Finally we set $p_j = p_{n_j}$, $Q_j = Q_{n_j}$ and $D_j =\vareps n_j (1 - \frac{\delta_j} 2)$.
\end{proof}

\begin{proof}[Proof of Proposition~\ref{LytchakSchroeder}]
Assume that the set $\mathcal{A}$ of affine functions contained in $\overline{\mathcal{C}}$ is nonempty. For
each integer $j >0$ we set
$$C_j = \{ p \in X \; | \; \forall f \in \mathcal{A} \; \exists z \in X, f(z)-f(p)=D_j \text{ and }
d(p,z)\leq (1+\delta_j)D_j\},$$
where $(D_j)$ and $(\delta_j)$ are the sequences provided by Lemma~\ref{lem:Q}.

\smallskip \noindent%
\emph{We claim that $C_j$ is non-empty.}

\smallskip%
Let us fix the index $j$ and consider the point $p_j \in X$ provided
by Lemma~\ref{lem:Q}. We will show that $p_j$ belongs to $C_j$. To
this end, let $f \in \mathcal A$. By definition, there is a sequence
$(s_n)$ of points of $X$ such that the sequence $(\bar d_{s_n})$
converges pointwise to $f$, where $\bar d_s$ denotes the normalised
distance function $\bar d_s(\cdot) = d(s, \cdot) - d(o,s)$. Now for
each $n$, Lemma~\ref{lem:Q} provides a point $q_n \in Q_j$ such that
$d( p_j, q_n) \leq D_j(1 + \delta_j)$ and $\bar d_{s_n}(q_n) - \bar
d_{s_n}(p_j) = d_{s_n}(q_n) -  d_{s_n}(p_j) \geq D_j$. Upon
extracting, we may assume that $(q_n)$ is constant and is equal to
$q \in Q_j$, since $Q_j$ is finite. Now, passing to the limit as $n
\to \infty$, we obtain $f(q) - f(p_j) \geq D_j$ and $d(p,q) \leq
(1+\delta_j)D_j$. Finally, since $f$ is affine, there exists a
unique point $z \in [p_j, q]$ such that $f(z) - f(p_j) = D_j$. This
confirms that $p_j \in C_j$ and the claim stands proven.

\smallskip \noindent%
\emph{We claim that $C_j$ is convex.}

\smallskip%
Indeed, let $p_1, p_2 \in C_j$, $f \in \mathcal{A}$ and $z_1, z_2$ such that for $i = 1, 2$ we have  $f(z_i) -
f(p_i) =D_j$ and $d(p_i, z_i )\leq (1+\delta_j)D_j$. Given $p \in [p_1, p_2]$ at distance $\lambda  d(p_1, p_2)$
from $p_1$, where $\lambda \in (0,1)$, let $z \in [z_1, z_2]$ be the unique point at distance $\lambda d(z_1,
z_2)$ from $z_1$. Since the distance function is convex, we have $d(p, z) \leq (1+\delta_j)D_j$. Furthermore,
since $f$ is affine we have $f(z) - f(p) = D_j$. Hence $p \in C_j$.

\smallskip \noindent%
\emph{We claim that $C_j= X$.}

\smallskip%
Since $\mathcal{A}$ is $\Isom(X)$-invariant, so is $C_j$. In view of the assumption of minimality on the
$\Isom(X)$-action, it follows that $C_j$ is dense in $X$ for all $D_j, \delta_j>0$. Now the claim follows from a
routine continuity argument using the fact that $f$ is $1$-Lipschitz.

\medskip%
For each $f \in \mathcal{A}$ and $p \in C_j$, we have $|\nabla_p(f)| \geq \frac 1 {1+\delta_j}$ since $f$ is
affine. By the previous claim, the latter inequality holds for all $p \in X$ and all $\delta_j>0$. Since $f$ is
$1$-Lipschitz it follows that $|\nabla_p(f)| = 1$ for all $p \in X$. Now Proposition~\ref{karlsson} yields a
point $\xi \in \bd X$ to which the gradient curve $t \mapsto \phi_t(p)$ converges as $t$ tends to infinity.
Since the gradient curve as velocity~$1$ (see \S\ref{sec:gradient}) we deduce that it is a geodesic ray pointing
to $\xi$. It follows that $-f$ is a Busemann function associated with $\xi$. In particular $-f = \lim_n \bar
d(\phi_n(p), \cdot)$ belongs to $\overline C$, hence to $\mathcal{A}$ since $f$ is affine. This yields another
point $\xi' \in \bd X$ and a geodesic ray $\phi'_t(p)$ pointing to $\xi'$. The concatenation of both rays is a
geodesic line $\gamma$ joining $\xi'$ to $\xi$ such that $(f\circ \gamma)'=1$. At this point,
\cite[Lemma~4.1]{LytchakSchroeder} yields the desired splitting.
\end{proof}

We conclude this section with a technical property of the space of functions $\mathcal C_0$ valid for arbitrary
\cat spaces.

\begin{lem} \label{nonaffine}
Let $X$ be any \cat space and $\mathcal{C}_0$ be as above.  Given a compact subset $\mathcal A \subset
\mathcal{C}_0$, if $\mathcal A$ does not contain any affine function, then the closed convex hull $Conv
(\mathcal A)$ does not contain any affine function either.
\end{lem}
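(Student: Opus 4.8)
The plan is to argue by contraposition on the structure of affine functions inside $\mathcal{C}_0$: rather than working directly with an arbitrary affine function in $\mathrm{Conv}(\mathcal A)$, I want to extract a suitable evaluation-type linear functional that detects affineness and is continuous for the pointwise topology, then use compactness of $\mathcal A$ and an extreme-point/barycentre argument to push the obstruction back into $\mathcal A$ itself. Concretely: a function $f \in \mathcal{C}_0$ fails to be affine precisely when there is a pair $x^+, x^- \in X$ with midpoint $m$ such that $f(x^+) + f(x^-) - 2f(m) > 0$ (the quantity is always $\geq 0$ by convexity). So "non-affine" is witnessed by a strictly positive value of one of the continuous linear functionals $\ell_{x^+,x^-} : g \mapsto g(x^+) + g(x^-) - 2g(m)$ on $\mathcal B$, and "affine" means $\ell_{x^+,x^-}(g) = 0$ for all such triples.

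First I would record that each $\ell_{x^+, x^-}$ is a continuous linear functional on $\mathcal B$ which is \emph{non-negative} on the convex set $\mathcal{C}_0$ (again by convexity of the functions involved). Now suppose $f \in \mathrm{Conv}(\mathcal A)$ is affine, so $\ell_{x^+,x^-}(f) = 0$ for every triple. Write $f$ as a limit (in the compact set $\mathrm{Conv}(\mathcal A) \subseteq \mathcal{C}_0$) of finite convex combinations $f_k = \sum_i \lambda_i^{(k)} g_i^{(k)}$ with $g_i^{(k)} \in \mathcal A$. For a fixed triple, $\ell_{x^+,x^-}(f_k) = \sum_i \lambda_i^{(k)} \ell_{x^+,x^-}(g_i^{(k)})$ is a convex combination of non-negative numbers converging to $0$. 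The cleanest way to conclude is to pass to the closed convex hull properly: $\mathrm{Conv}(\mathcal A)$ (closed convex hull of a compact set in the locally convex space $\mathcal B$, which sits inside the compact product $\prod_x I_x$) is itself compact, and by the Krein–Milman theorem its extreme points lie in $\mathcal A$ — more precisely, by Milman's partial converse, $\mathrm{ext}(\mathrm{Conv}(\mathcal A)) \subseteq \mathcal A$ since $\mathcal A$ is compact. An affine $f$ is a barycentre of a probability measure $\mu$ supported on $\mathrm{ext}(\mathrm{Conv}(\mathcal A)) \subseteq \mathcal A$; then for each triple, $0 = \ell_{x^+,x^-}(f) = \int \ell_{x^+,x^-}(g)\, d\mu(g)$, an integral of a non-negative continuous function, forcing $\ell_{x^+,x^-}(g) = 0$ for $\mu$-almost every $g$. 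Taking a countable separating family of triples (or arguing that the support is non-empty) yields some $g \in \mathcal A$ with $\ell_{x^+,x^-}(g) = 0$ for all triples, i.e. an affine $g \in \mathcal A$, contradicting the hypothesis.

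The main obstacle I anticipate is the measure-theoretic bookkeeping needed to go from "$\mu$-almost every $g$ satisfies $\ell_{x^+,x^-}(g)=0$" (a statement depending on the triple) to "some single $g$ satisfies it for \emph{all} triples simultaneously". If $X$ is non-separable there may be uncountably many triples, so one cannot naively intersect countably many full-measure sets. I would circumvent this by a more elementary route avoiding Choquet theory altogether: work with a single triple at a time. Fix $f$ affine in $\mathrm{Conv}(\mathcal A)$ and suppose for contradiction no element of $\mathcal A$ is affine; then for each $g \in \mathcal A$ there is a triple with $\ell(g) > 0$, and by continuity a neighbourhood of $g$ in $\mathcal C_0$ on which $\ell > c_g > 0$ for the \emph{same} triple — by compactness finitely many such neighbourhoods cover $\mathcal A$, giving finitely many triples $\{(x_j^+, x_j^-)\}_{j=1}^N$ and $c > 0$ with $\sum_{j} \ell_{x_j^+, x_j^-}(g) \geq c$ for all $g \in \mathcal A$. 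Since each $\ell_{x_j^+,x_j^-}$ is linear and non-negative on $\mathcal C_0$, the functional $L = \sum_j \ell_{x_j^+, x_j^-}$ satisfies $L \geq c$ on $\mathcal A$, hence $L \geq c$ on every convex combination of elements of $\mathcal A$, and by pointwise continuity $L \geq c$ on $\mathrm{Conv}(\mathcal A)$, in particular $L(f) \geq c > 0$; but $f$ affine gives $L(f) = 0$, a contradiction. This version uses only finite convex combinations, continuity, and compactness of $\mathcal A$, so it is the one I would write up.
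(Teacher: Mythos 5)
Your final write-up (the finite-cover argument with the functional $L=\sum_j \ell_{x_j^+,x_j^-}$, non-negative on $\mathcal{C}_0$, uniformly positive on $\mathcal{A}$, hence on the closed convex hull by linearity and continuity) is correct and is essentially the paper's own proof, which sums the same midpoint-convexity defects over a finite subcover of $\mathcal{A}$. The Krein--Milman/Choquet detour you sketch first is not needed, as you yourself conclude.
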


\begin{proof}
For any $f\in \mathcal A$  we find some pair of points $x_f^+,x_f^- \in X$ with midpoint $m_f$, such that
$\vareps _f = f(x_f^+) +f(x_f ^-)- 2f(m_f) >0$.  For each $f\in \mathcal A$, let $U_f$ be the open subset of
$\mathcal A$ consisting of all $h$ with $h(x_f^+) +h(x_f ^-)- 2h(m_f) >\vareps _f /2$. Since $\mathcal A$ is
compact, finitely many $U_{f_i}$ cover $\mathcal A$. Therefore, using the convexity of $h$, we deduce that
$$r(h) := \sum_i \big( h(x_{f_i}^+) +h(x_{f_i} ^-)- 2h(m_{f_i})\big)\geq \inf \{\vareps _{f_i} \} >0$$
for all $h\in \mathcal A$. Thus the continuous functional $h \mapsto r(h)$ is strictly positive on $\mathcal A$,
hence it is positive on the compact convex hull of $\mathcal A$. Therefore each $f\in Conv (\mathcal A)$ is
non-affine on at least one of the geodesics $[x_{f_i} ^+, x_{f_i}^-]$.
\end{proof}

\section{Filtering families of convex sets}

The purpose of this section is to prove Theorem~\ref{filtering}. We start by considering an analogous property
for finite-dimensional \catone spaces.

\subsection{\catone case}

We start with the following analogue of the finite intersection property for bounded convex sets in \cat spaces.

\begin{lem}\label{lem:cat1:FiniteIntersection}
Let $X$ be a complete \catone space of radius~$<\frac{\pi}{2}$. Then any filtering family $\{X_\alpha\}_{\alpha
\in A}$ of closed convex subspaces has a non-empty intersection.
\end{lem}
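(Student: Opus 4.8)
The plan is to exploit the fact that a complete \catone space $X$ of radius~$<\frac{\pi}{2}$ has a well-defined \emph{circumcenter map}, assigning to each bounded subset its (unique) circumcenter; this is the \catone analogue of the circumcenter in \cat spaces, and its existence and uniqueness follow from the assumption on the radius together with convexity of balls of radius~$<\frac{\pi}{2}$ in \catone spaces. First I would recall that in such a space every closed convex subset $Y \subseteq X$ again has radius~$<\frac{\pi}{2}$, so it possesses a circumcenter $c(Y) \in Y$, and that the map $Y \mapsto c(Y)$ is ``monotone'' in a suitable sense. Next I would use the finite intersection property: for a \emph{finite} subfamily $X_{\alpha_1}, \dots, X_{\alpha_k}$, the filtering hypothesis gives a single member $X_\beta \subseteq \bigcap_i X_{\alpha_i}$, so in particular every finite subfamily has non-empty intersection. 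Thus the family $\{X_\alpha\}$ has the finite intersection property inside the space $X$.

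The key step is then to reduce to bounded sets so that the finite intersection property for \emph{bounded} closed convex subsets of \catone spaces (the analogue of the result cited for \cat spaces, e.g.\ via the circumcenter argument of Lang--Schroeder or the $\mathscr{T}_c$-quasicompactness argument of Monod) can be applied. Here I would argue as follows: pick any $X_{\alpha_0}$ in the family; it is a complete \catone space of radius~$<\frac{\pi}{2}$, hence bounded (diameter~$\leq\pi$, in fact radius~$<\frac{\pi}{2}$). For each $\alpha$, the set $X_\alpha \cap X_{\alpha_0}$ need not itself be a member of the family, but by the filtering property one can find $X_{\alpha'} \subseteq X_\alpha \cap X_{\alpha_0}$, and the subfamily $\{X_{\alpha'} : \alpha \in A\}$ is still filtering, consists of bounded closed convex sets (being contained in the bounded set $X_{\alpha_0}$), and has the same intersection as the original family up to the non-emptiness we want. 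Now apply the finite intersection property for bounded closed convex subsets to this subfamily: every finite subfamily has non-empty intersection by the filtering argument above, hence the whole family has non-empty intersection.

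The main obstacle I anticipate is establishing the finite intersection property for bounded closed convex subsets in the \catone setting under the radius hypothesis — i.e.\ verifying that the circumcenter-based compactness argument that works for bounded convex sets in \cat spaces transfers to \catone spaces of radius~$<\frac{\pi}{2}$. The point is that balls of radius~$<\frac{\pi}{2}$ are convex in \catone spaces, and circumradii of bounded sets contained in a fixed such ball stay bounded away from $\frac{\pi}{2}$; one then runs the usual argument that an ultralimit / net of circumcenters of the finite intersections converges to a point in all the $X_\alpha$'s. I would either cite the appropriate \catone version of \cite{Monod_superrigid} or \cite{LangSchroeder}, or spell out the circumcenter convergence argument, taking care that the strict inequality on the radius is used precisely to keep the relevant balls convex and to guarantee uniqueness of circumcenters. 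Everything else (the filtering-to-finite-intersection reduction, the passage to a bounded subfamily) is routine.
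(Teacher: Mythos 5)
Your proposal is correct and is essentially the paper's own argument: the paper disposes of this lemma by citing the CAT(1) analogues of the projection and circumcentre facts (\cite[II.2.6(1) and II.2.7]{Bridson-Haefliger}) and observing that the proof of the finite intersection property for bounded closed convex sets in \cite[Theorem~14]{Monod_superrigid} then carries over verbatim, which is precisely the circumcentre-convergence transfer you describe. Your preliminary reduction to a bounded subfamily is superfluous (radius~$<\frac{\pi}{2}$ already makes the whole space, hence every member, bounded), but otherwise the plan coincides with the paper's proof.
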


\begin{proof}
Given \cite[II.2.6(1) and II.2.7]{Bridson-Haefliger}, the proof is identical to that in
\cite[Theorem~14]{Monod_superrigid}.
\end{proof}

\begin{lem}\label{lem:cat1:nested}
Let $X$ be a finite-dimensional \catone space and $\{X_i\}_{i \geq 0}$ be a decreasing sequence of closed convex
subsets such that $\rad(X_i)\leq \frac \pi 2$. Then the intersection $\bigcap_i X_i$ is a non-empty subset of
intrinsic radius~$\leq \pi /2$.
\end{lem}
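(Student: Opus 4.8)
The strategy is to reduce everything to \lref{lem:cat1:FiniteIntersection} (a decreasing sequence is a filtering family), using one elementary fact about \catone geometry: in a complete \catone space $Z$, every non-empty closed convex subset $C$ with $\rad_Z(C)<\frac\pi2$ contains its unique circumcentre. Indeed, were the circumcentre $z$ outside $C$, its nearest-point projection $z'$ onto $C$ would be well defined (since $d(z,C)\le\rad_Z(C)<\frac\pi2$), and applying the spherical law of cosines to the comparison triangles $\Delta(z,z',y)$, $y\in C$, together with $\angle_{z'}(z,y)\ge\frac\pi2$, would give $d(z',y)\le d(z,y)$ for all $y\in C$, so that $z'$ realises $\rad_Z(C)$ as well, contradicting uniqueness; the same computation shows that nearest-point projection onto a closed convex set does not increase circumradii in the range where it is defined. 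After passing to the completion of $X$ (again finite-dimensional and \catone) and then replacing $X$ by $X_0$, we may assume $X$ complete and bounded of diameter $\le\pi$, and it remains to prove that $\bigcap_i X_i$ is non-empty of intrinsic radius $\le\frac\pi2$.

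Suppose first that $\rad_X(X_{i_0})<\frac\pi2$ for some $i_0$ (this holds, e.g., as soon as $\diam X_{i_0}<\arccos\bigl(-\tfrac1{\dim X+1}\bigr)$, by the \catone version of Jung's inequality discussed in \secref{secjung}). Let $c$ be the circumcentre of $X_{i_0}$ and set $Z=\overline{B}_X\bigl(c,\rad_X(X_{i_0})\bigr)$; this is a closed convex, hence complete \catone, subspace with $\rad(Z)<\frac\pi2$, and $X_j\subseteq X_{i_0}\subseteq Z$ for all $j\ge i_0$. By \lref{lem:cat1:FiniteIntersection} the set $\bigcap_i X_i=\bigcap_{j\ge i_0}X_j$ is non-empty, and, being closed convex in $Z$ with circumradius $<\frac\pi2$ in $Z$, it contains its circumcentre; hence its intrinsic radius is $<\frac\pi2$.

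The remaining case, $\rad_X(X_i)=\frac\pi2$ for every $i$, is the crux. Here I would work in the Euclidean cone $CX$ over $X$, which is a complete \cat space; the cones $C(X_i)=\{ty:t\ge 0,\ y\in X_i\}$ are closed, convex (because $\diam X_i\le\pi$) and nested, and $\bigcap_i X_i\ne\varnothing$ if and only if $\bigcap_i C(X_i)$ strictly contains the apex $o$. Assume it does not. For a fixed $p\in X$, the nearest-point projections of $1\cdot p$ onto the $C(X_i)$ form, by the usual \cat argument, a Cauchy sequence converging into $\bigcap_i C(X_i)=\{o\}$; since a short computation gives $d_{CX}\bigl(1\cdot p,C(X_i)\bigr)=\sin d_X(p,X_i)$ when $d_X(p,X_i)\le\frac\pi2$ (and $=1$ otherwise), this forces $d_X(p,X_i)\nearrow\frac\pi2$ with $d_X(p,X_i)\le\frac\pi2$ for all $i$ and \emph{every} $p\in X$. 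One then derives a contradiction using finite-dimensionality: by repeatedly taking near-projections one obtains either a point $p$ with $d_X(p,X_i)<\frac\pi2$ for all $i$ — such a $p$ lies in $\bigcap_i\{x:d(x,X_i)<\frac\pi2\}$, and Cauchy-ness of the nearest-point projections of $p$ onto the $X_i$ then produces a point of $\bigcap_i X_i$ — or else an ``escape'' configuration which is excluded by a Jung-type dimension estimate in $CX$. Once $\bigcap_i X_i$ is known to be non-empty, the intrinsic radius bound follows from the circumcentre fact above when its circumradius in $X$ is $<\frac\pi2$, and otherwise by passing to the limit in the inclusions $X_i\subseteq\overline{B}_X(c_i,\frac\pi2+\vareps_i)$ along a convergent subsequence of approximate circumcentres $c_i\in X_i$, whose convergence is part of the same analysis.

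I expect the genuinely delicate point to be this last step of Case~2 — ruling out, in finite dimension, the possibility that the $X_i$ recede to a ``$\frac\pi2$-sphere'' around a wandering family of approximate circumcentres — whereas Case~1 and all the reductions are routine consequences of \lref{lem:cat1:FiniteIntersection} and standard \catone geometry.
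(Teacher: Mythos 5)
You correctly locate the difficulty, but that is exactly where your proposal stops being a proof: the case $\rad_X(X_i)=\frac{\pi}{2}$ for all $i$ is only a programme. The decisive step --- ruling out the ``escape'' configuration by ``a Jung-type dimension estimate in $CX$'' obtained ``by repeatedly taking near-projections'' --- is never carried out, and it is the entire content of the lemma; nothing in the sketch explains how finite-dimensionality actually produces the contradiction. There are also smaller flaws inside the sketch: from $\bigcap_i C(X_i)=\{o\}$ you only get that $d_X(p,X_i)$ increases to a limit $\geq \frac{\pi}{2}$, not that it stays $\leq\frac{\pi}{2}$ for all $i$; the concluding radius bound invokes a convergent subsequence of approximate circumcentres $c_i\in X_i$, which is unavailable since a finite-dimensional \catone space need not be proper; and the initial passage to the completion is not a harmless reduction, because a point of the intersection of the closures of the $X_i$ in the completion need not lie in $X$.

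The idea you are missing is to exploit that the hypothesis concerns the \emph{intrinsic} radius: each $X_i$ contains a point $z_i$ with $X_i\subseteq \overline{B}(z_i,\frac{\pi}{2})$, and these centres carry the whole argument in the paper. Since $z_j\in X_i$ for $j\geq i$, all pairwise distances satisfy $d(z_i,z_j)\leq\frac{\pi}{2}$, so (balls of radius $\leq\frac{\pi}{2}$ being convex) the closed convex hull $C$ of $\{z_i\}$ has intrinsic radius $\leq\frac{\pi}{2}$. Finite-dimensionality is used exactly once: if $\rad(C)=\frac{\pi}{2}$, then every $z_i$, hence by convexity of the set of centres every point of $C$, is a centre, so $\diam(C)\leq\rad(C)$, contradicting \cite[Proposition~1.2]{BalserLytchak_Centers}; hence $\rad(C)<\frac{\pi}{2}$. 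The nested convex hulls $C_i$ of the tails $\{z_j \mid j\geq i\}$ are then closed convex subsets of a \catone space of radius $<\frac{\pi}{2}$ and satisfy $C_i\subseteq X_i$, so \lref{lem:cat1:FiniteIntersection} yields a point $q\in\bigcap_i C_i\subseteq\bigcap_i X_i$; moreover every $x\in\bigcap_i X_i$ satisfies $d(x,z_j)\leq\frac{\pi}{2}$ for all $j$, hence $d(x,q)\leq\frac{\pi}{2}$, which gives the intrinsic radius bound with $q$ as centre. In particular no cone construction and no analysis of wandering circumcentres is needed, and your Case~2 --- the one your argument does not settle --- is precisely the case this disposes of.
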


\begin{proof}
Let $z_i$ be a centre of $X_i$ and $Z = \{z_i \; | \; i \geq 0\}$. By assumption $d(z_i,z_j) \leq \frac \pi 2$
for all $i, j$. Since any ball of radius~$\leq \frac \pi 2$ is convex, it follows that the closed convex hull
$C$ of $Z$ has intrinsic radius~$\leq \frac \pi 2$.

We claim that $\rad(C) < \frac \pi 2$. Otherwise we have $\rad(C) = \frac \pi 2$ and every $z \in Z$ is a centre
of $C$. Since the set of all centres is \emph{convex}, it follows that every point of $C$ is a centre. This
implies $\diam(C) \leq \rad (C)$, which contradicts \cite[Proposition~1.2]{BalserLytchak_Centers} and thereby
establishes the claim.

Let $C_i$ be the convex hull of $\{z_j \; | \; j\geq i\}$. Then $(C_i)_{i \geq 0}$ is a decreasing sequence of
closed convex subsets in a \catone space of radius $<\pi /2$. By Lemma~\ref{lem:cat1:nested}, the intersection
$Q = \bigcap_i C_i$ is non-empty. Notice that $C_i \subseteq X_i$ whence $Q \subseteq \bigcap_i X_i$. The latter
intersection is thus non-empty.

For each $x\in \bigcap_i X_i$ we have $d(x, z_j)\leq \frac \pi 2$ for all $j$. Thus $C_j$  is contained in the
ball of radius~$\frac \pi 2$ around $x$. Therefore $d(x,q)\leq \pi /2$ for all $x\in \bigcap_i X_i$ and $q\in
Q$. This shows that $\bigcap_i X_i$ has radius at most $\frac \pi 2$.
\end{proof}

\begin{prop}\label{prop:cat1:filtering}
Let $X$ be a finite-dimensional \catone space and $\{X_\alpha\}_{\alpha \in A}$ be a filtering family of closed
convex subsets such that $\rad(X_\alpha)\leq \frac \pi 2$ for each $\alpha \in A$. Then the intersection
$\bigcap_{\alpha \in A} X_\alpha$ is a non-empty subset of intrinsic radius~$\leq \pi /2$.
\end{prop}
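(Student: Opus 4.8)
The plan is to bootstrap from the nested case (Lemma~\ref{lem:cat1:nested}) to arbitrary filtering families, using the finite intersection property for convex sets in \catone spaces of radius~$<\pi/2$ (Lemma~\ref{lem:cat1:FiniteIntersection}) together with a compactness/radius-control argument. The first step is to deal with \emph{finite} sub-families: given $\alpha_1, \dots, \alpha_k \in A$, filteringness produces $\beta \in A$ with $X_\beta \subseteq X_{\alpha_1} \cap \dots \cap X_{\alpha_k}$, so $\bigcap_{i} X_{\alpha_i} \supseteq X_\beta \neq \emptyset$; hence every finite sub-family has non-empty intersection. The real content is to pass from finite to infinite intersections while keeping the intrinsic-radius bound.

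Here is the approach I would take. For each $\alpha$ pick a centre $z_\alpha$ of $X_\alpha$, so that $X_\alpha \subseteq B(z_\alpha, \pi/2)$. The key observation is that by filteringness, for any $\alpha, \beta$ there is $\gamma$ with $X_\gamma \subseteq X_\alpha \cap X_\beta$, and then $z_\gamma \in X_\alpha \cap X_\beta$, so $d(z_\alpha, z_\gamma) \leq \pi/2$ and $d(z_\beta, z_\gamma) \leq \pi/2$; thus the set $Z = \{z_\alpha : \alpha \in A\}$ has the property that any two of its points lie within $\pi/2$ of a common point of $Z$, hence (since balls of radius $\leq \pi/2$ are convex) the closed convex hull $C$ of $Z$ has intrinsic radius~$\leq \pi/2$. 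Repeating the argument from the proof of Lemma~\ref{lem:cat1:nested} verbatim — if $\rad(C) = \pi/2$ then every point of $Z$, hence of $C$, is a centre, forcing $\diam(C) \leq \rad(C)$ and contradicting \cite[Proposition~1.2]{BalserLytchak_Centers} — we get $\rad(C) < \pi/2$. So we may work inside $C$, a complete (closed convex subset of a complete \catone space; note $X$ finite-dimensional is automatically complete, or pass to the completion) \catone space of radius~$<\pi/2$, which is moreover finite-dimensional as a convex subset of $X$.

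Now I would apply Lemma~\ref{lem:cat1:FiniteIntersection} directly: the family $\{X_\alpha \cap C\}_{\alpha \in A}$ — or better, the original family $\{X_\alpha\}_{\alpha \in A}$, each member of which is contained in $C$ once we note $X_\alpha \subseteq B(z_\alpha, \pi/2)$ and $z_\alpha \in C$ gives $X_\alpha \subseteq C$ — is a filtering family of closed convex subsets of the complete \catone space $C$ of radius~$<\pi/2$. By Lemma~\ref{lem:cat1:FiniteIntersection}, $Q := \bigcap_{\alpha \in A} X_\alpha$ is non-empty. It remains to bound $\rad(Q)$. For this, fix any $\alpha_0$; then $Q \subseteq X_{\alpha_0} \subseteq B(z_{\alpha_0}, \pi/2)$, but $z_{\alpha_0}$ need not lie in $Q$. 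Instead, argue as in Lemma~\ref{lem:cat1:nested}: for any $x \in Q$ and any $\alpha$, filteringness gives $\gamma$ with $z_\gamma \in X_\alpha \cap X_{\text{(witness)}}$; more cleanly, for $x \in Q$ and any $\alpha$ we have $x \in X_\alpha \subseteq B(z_\alpha, \pi/2)$, so $d(x, z_\alpha) \leq \pi/2$ for every $\alpha$, whence every $z_\alpha \in B(x, \pi/2)$, so $C = \overline{\mathrm{Conv}}(Z) \subseteq B(x, \pi/2)$, and in particular $Q \subseteq C \subseteq B(x, \pi/2)$. Since $x \in Q$ was arbitrary, $\rad(Q) \leq \pi/2$, as desired.

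The step I expect to be the main obstacle is verifying that one really is allowed to replace $X$ by the convex hull $C$ and apply Lemma~\ref{lem:cat1:FiniteIntersection} there: one must check that $C$ is complete (which holds since a finite-dimensional \catone space, being locally compact away from the issue — actually here completeness should be assumed or $C$ is closed in the complete space, so fine) and genuinely has radius~$<\pi/2$, and that each $X_\alpha$ does lie in $C$ (which follows from $X_\alpha \subseteq B(z_\alpha,\pi/2)$ together with convexity of such balls, so $B(z_\alpha, \pi/2) \cap C$ is convex containing $Z \cap$ relevant points — actually the cleanest route is simply $X_\alpha \subseteq B(z_\beta, \pi/2)$ for a suitable $\beta$ via filteringness, hence in the ball-convex hull). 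Once the ambient space is correctly set up, the remaining arguments are direct repetitions of those in Lemma~\ref{lem:cat1:nested}.
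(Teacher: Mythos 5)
Your reduction hinges on the claim that the closed convex hull $C$ of the set of centres $Z=\{z_\alpha\}$ has intrinsic radius at most $\pi/2$ (and then $<\pi/2$, so that Lemma~\ref{lem:cat1:FiniteIntersection} applies). The inference you offer for this --- ``any two points of $Z$ lie within $\pi/2$ of a common point of $Z$, hence $\rad(C)\leq \pi/2$'' --- is not valid in a \catone space: the equator of the round sphere $S^2$ has exactly this property (the midpoint of any two equatorial points is an equatorial point within $\pi/2$ of both), yet it is convex of intrinsic radius $\pi$. In the nested case of Lemma~\ref{lem:cat1:nested} the argument works because there one has the much stronger fact $d(z_i,z_j)\leq\pi/2$ for \emph{all} pairs (each later centre lies in each earlier set), so that $C\subseteq B(z_i,\pi/2)$ for every $i$; in the filtering case you only get $d(z_\alpha,z_\beta)\leq\pi$ via a common point $z_\gamma$, and no single centre is known to see all of $Z$ within $\pi/2$. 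This also breaks your ``verbatim'' repetition of the strictness step: the deduction ``$\rad(C)=\pi/2$ implies every point of $Z$ is a centre of $C$'' again requires $C\subseteq B(z_\alpha,\pi/2)$ for every $\alpha$, which is exactly what is missing. (The further assertion that $X_\alpha\subseteq C$ is simply false --- $C$ is built only from centres --- though your fallback of working with the family $\{X_\alpha\cap C\}$ would sidestep that particular point if the radius claim were available.)

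The gap is not a technicality: a bound $\rad(C)\leq\pi/2$ for the centre set is essentially a consequence of the proposition itself (any point of $\bigcap_\alpha X_\alpha$ is within $\pi/2$ of every $z_\alpha$), so assuming it up front is close to circular. The genuine difficulty, which your route never confronts, is the degenerate situation where $\sup_\alpha d_{X_\alpha}(z_\beta)=\pi/2$ for a fixed centre $z_\beta$; the paper handles it by induction on $\dim X$, first ruling out $d_{X_\alpha}(z_\beta)=\pi/2$ for top-dimensional members via the spherical-suspension dimension argument of \cite[Lemma~4.1]{Lytchak:rigidity}, then, when the supremum is $\pi/2$, extracting a nested sequence, applying Lemma~\ref{lem:cat1:nested} to drop to a lower-dimensional convex set $Y$, and concluding by induction; only when $\sup_\alpha d_{X_\alpha}(z_\beta)=r<\pi/2$ does it invoke Lemma~\ref{lem:cat1:FiniteIntersection}, applied to the family $\{B(z_\beta,r)\cap X_\alpha\}$, which is how one legitimately manufactures an ambient space of radius $<\pi/2$. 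Some case distinction and dimension induction of this kind (or a new idea replacing it) is needed; as written, your argument proves the proposition only under the additional unproved hypothesis that the centres all lie in a convex set of radius $<\pi/2$.
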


\begin{proof}
We proceed by induction on $n = \dim X$. There is nothing to prove in dimension $0$, hence the induction can
start.

If $\dim(X_0) < n$ for some index $0 \in A$, then the induction hypothesis applied to the filtering family
$\{X_0 \cap X_\alpha\}_{\alpha \in A}$ yields the desired conclusion. We assume henceforth that $\dim(X_\alpha)
= n$ for each $\alpha \in A$.

For $\beta \in A$, let $z_\beta$ be a centre of $X_\beta$. If $d_{X_\alpha}(z_\beta)=\frac{\pi}{2}$ for some
$\beta \in A$, then the closed convex hull of $z_\beta$ and $X_\alpha$ coincides with the spherical suspension
of $z_\beta$ and $X_\alpha$ (see \cite[Lemma~4.1]{Lytchak:rigidity}) and hence has dimension $1+\dim(X_\alpha)$.
This is absurd since $\dim(X_\alpha) = \dim(X)$. We deduce $d_{X_\alpha}(z_\beta) <\frac{\pi}{2}$ for all
$\alpha, \beta \in A$.

\smallskip%
Assume now that $\sup_{\alpha \in A} d_{X_{\alpha}}(z_\beta) =\frac \pi 2$. Then there is a countable sequence
$(X_{\alpha_i})_{i \geq 0}$ with $\alpha_i \in A$ such that $\lim_i d_{X_{\alpha_i}}(z_\beta) = \frac \pi 2$.
Upon replacing $X_{\alpha_j}$ by $\bigcap _{i=0} ^j X_{\alpha_i}$ we may and shall assume that the sequence
$(X_{\alpha_i})_{i \geq 0}$ is decreasing. By Lemma~\ref{lem:cat1:nested} the intersection $Y=\bigcap_{i\geq 0}
X_{\alpha_i}$ is a non-empty closed convex subset of $X$. Furthermore by definition we have $d_Y(z_\beta)=\frac
\pi 2$. In particular, we deduce by the same argument as above that $\dim (Y)<n$.

Now for each $\alpha \in A$, we apply Lemma~\ref{lem:cat1:nested} to the nested family $(X_\alpha \cap
X_{\alpha_i})_{i \geq 0}$, which shows that $Y_{\alpha}=\bigcap_{i\geq 0} (X_\alpha \cap X_{\alpha_i})$ is a
closed convex non-empty subset of $Y$ with intrinsic radius at most $\frac \pi 2$. Moreover, the family
$\{Y_{\alpha}\}_{\alpha \in A}$ is filtering and we have $\bigcap_\alpha Y_{\alpha} =\bigcap_\alpha
(X_{\alpha})$. It follows by induction that $\bigcap_\alpha  X_{\alpha}$  is non-empty and of intrinsic radius
at most $\frac \pi 2$, as desired.

\smallskip%
It remains to consider the case when $r = \sup_{\alpha} d_{X_{\alpha}}(z_\beta)<\pi /2$. We are then in a
position to apply Lemma~\ref{lem:cat1:FiniteIntersection} to the filtering family $\{ B(z_\beta, r) \cap
X_{\alpha} \}_{\alpha \in A}$. We deduce that $Y=\bigcap_\alpha X_{\alpha}$ is non-empty. Moreover, since
$d_Y(z_\beta)\leq r < \frac \pi 2$, we deduce by considering the nearest point projection of $z_\beta$ to $Y$
(see \cite[II.2.6(1)]{Bridson-Haefliger}) that $\rad(Y)< \frac{\pi}{2}$.
\end{proof}

\subsection{\cat case}

We start with the special case of nested sequences of convex sets.

As pointed out to us by the referee, the use of the gradient flow in
the argument below is reminiscent of the proof of Lemma~5 on p.~217
in~\cite{BGS95}.

\begin{lem}\label{lem:gradient}
Let $X$ be a complete \cat space of telescopic dimension~$n < \infty$ and $(X_i)_{i \geq 0}$ be a nested
sequence of closed convex subsets such that $\bigcap_{i \geq 0} X_i$ is empty. Let $o \in X$ be a base point and
set $f_i : x \mapsto d_{X_i}(x) - d_{X_i}(o)$.  Then the sequence $(f_i)_{i \geq 0}$ sub-converges to a
$1$-Lipschitz convex function $f$ which satisfies $\inf_{p \in X} |\nabla_p(-f) | \geq \frac{1}{2}\big( 1 -
\sqrt{\frac{n}{n+1}} \big)$.
\end{lem}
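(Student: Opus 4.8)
The plan is to first extract a limiting function $f$ from the sequence $(f_i)$ using compactness of $\mathcal{C}_0$, and then estimate its absolute gradient uniformly from below by combining the finite-intersection property of balls with the Jung-type radius estimate of Theorem~\ref{raddiam}. Since each $f_i$ lies in $\mathcal{C}_0$ (the $f_i$ are $1$-Lipschitz convex and normalised at $o$), and $\mathcal{C}_0$ is compact for pointwise convergence, the sequence sub-converges to some $f \in \mathcal{C}_0$; this $f$ is automatically $1$-Lipschitz and convex. The content is the gradient bound, and the telescopic dimension assumption enters only through the constant $\sqrt{n/(n+1)}$.

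Next I would fix a point $p \in X$ and aim to show that $f$ drops at rate at least $c := \frac12\big(1-\sqrt{n/(n+1)}\big)$ near $p$. The idea is that for large $i$, the set $X_i$ is far from $p$ (since $\bigcap_i X_i = \emptyset$ and the sets are nested closed convex, one shows $d_{X_i}(p) \to \infty$), and the nearest-point projection $\pi_i(p)$ of $p$ onto $X_i$ gives a direction in which $d_{X_i}$, hence $f_i$, decreases at unit rate. To turn this into a bound on $|\nabla_p(-f)|$ I would instead work with a well-chosen \emph{finite} configuration: pick points $y_1,\dots,y_{m} \in X$ at controlled distance from $p$ realising near-optimal descent for finitely many $f_i$ simultaneously, then pass to the limit $f$ (finiteness is what lets the descent property survive the pointwise limit, exactly as in the proof of Proposition~\ref{LytchakSchroeder} and Lemma~\ref{lem:Q}). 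The geometric input is that the circumradius of such a configuration, relative to the whole space $X$ whose relevant asymptotic cone has dimension $\le n$, is at most roughly $\sqrt{n/(n+1)}$ times its diameter by Theorem~\ref{raddiam}; so some point of the configuration must lie at distance at most about $\sqrt{n/(n+1)}$ times the scale from a common circumcentre, and measuring the drop of $f$ from $p$ to that circumcentre versus the distance travelled yields the factor $\frac12\big(1-\sqrt{n/(n+1)}\big)$ after balancing the two competing estimates.

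Concretely, at scale $R$ one chooses $R$ large, sets the $y_j$ to be projections of $p$ to various $X_{i}$ (with $d(p,y_j)\approx R$ and $f(y_j)-f(p) \approx -R$), lets $z$ be a circumcentre of $\{y_j\}$ in $X$ with $d(z,y_j)\le \rho R$ where $\rho$ is close to $\sqrt{n/(n+1)}$; since $f$ is $1$-Lipschitz, $f(z) \le f(y_j) + \rho R \le f(p) - R + \rho R$, while $d(p,z) \le d(p,y_j)+d(y_j,z) \le R(1+\rho)$, so $\frac{f(p)-f(z)}{d(p,z)} \ge \frac{(1-\rho)R}{(1+\rho)R} = \frac{1-\rho}{1+\rho}$. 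One then checks $\frac{1-\rho}{1+\rho} \ge \frac12(1-\rho) \ge \frac12\big(1-\sqrt{n/(n+1)}\big)$ by letting $\rho \to \sqrt{n/(n+1)}<1$. Letting $R \to \infty$ (which is legitimate because $d_{X_i}(p)\to\infty$, so the scales are genuinely available) gives $|\nabla_p(-f)| \ge c$ for every $p$.

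The main obstacle I anticipate is the bookkeeping in the passage to the limit: one must arrange the finite configuration $\{y_j\}$ so that (a) the near-optimal descent $f(y_j)-f(p)\approx -R$ holds for the \emph{limit} $f$ and not merely for individual $f_i$, which forces one to choose finitely many indices $i$ first, apply Theorem~\ref{raddiam} to that finite set, and only then let $f_i \to f$; and (b) the use of Theorem~\ref{raddiam} is in its \emph{telescopic} form, so the constant acquires an arbitrarily small additive error $\delta$ once $R$ exceeds the threshold $D=D(\delta)$, and this $\delta$ must be absorbed into the final inequality by the strict gap $\sqrt{n/(n+1)}<1$. Handling the alternative where $\sup_i d_{X_i}(p)$ might fail to be infinite is ruled out at the outset: if $d_{X_i}(p)$ stayed bounded, the closed balls $B(p,r)\cap X_i$ would form a nested family of bounded closed convex sets with the finite intersection property, forcing $\bigcap_i X_i \ne \emptyset$, contrary to hypothesis.
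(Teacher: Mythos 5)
Your overall strategy (compactness of $\mathcal{C}_0$, forcing $d_{X_i}(p)\to\infty$ via the finite intersection property, then a circumcentre plus Jung-type estimate) is the same as the paper's, but the step on which everything hinges is not justified and is false as stated: you assert that your configuration points satisfy $f(y_j)-f(p)\approx -R$ for the \emph{limit} function $f$. What is true is only that $f_{i_j}(y_j)-f_{i_j}(p)\approx -R$ for the single index $i_j$ to which $y_j$ is adapted; for a much later index $i'\gg i_j$ the drop $f_{i'}(y_j)-f_{i'}(p)=d_{X_{i'}}(y_j)-d_{X_{i'}}(p)$ can be arbitrarily small (e.g.\ $X_{i_j}$ a half-plane in $\RR^2$ and $X_{i'}$ a far-away sub-half-plane of it: projecting $p$ onto $X_{i_j}$ barely decreases the distance to $X_{i'}$), so nothing forces the pointwise limit $f$ to drop at $y_j$. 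The ``finitely many points'' mechanism you invoke from Lemma~\ref{lem:Q} does not repair this, because there is no reason that for every large $i$ \emph{some} member of your fixed finite configuration realises descent for $f_i$. The paper's proof avoids the problem by an index-matched estimate at a single common point: with $y_i$ the point at distance $t$ along $[p,x_i]$ ($x_i$ the projection of $p$ to $X_i$) and $z$ the circumcentre of $Y=\{y_i\}$, one has $d_{X_i}(z)\le d_{X_i}(y_i)+d(y_i,z)=d_{X_i}(p)-t+d(y_i,z)$, hence $f_i(z)-f_i(p)\le \rad(Y)-t$ \emph{for every} $i$, and only then does one pass to the limit, at the fixed pair $(p,z)$. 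Your displayed chain $f(z)\le f(y_j)+\rho R\le f(p)-R+\rho R$ takes the limit at the wrong place.

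Two further ingredients are missing. First, Theorem~\ref{raddiam} bounds the circumradius by $\bigl(\delta+\sqrt{\tfrac{n}{2(n+1)}}\bigr)\diam(Y)$ (not $\sqrt{n/(n+1)}$ times the diameter, as you quote); to get $d(y_j,z)\lesssim \sqrt{n/(n+1)}\,t$ relative to the \emph{scale} one needs the diameter bound $\diam(Y)\le\sqrt2\,t$, which the paper derives from the comparison estimate using $\angle_{x_i}(p,x_j)\ge\pi/2$ for $j>i$ (valid because $x_j\in X_j\subseteq X_i$ and $x_i$ is the nearest-point projection of $p$). Without this, $\diam(Y)$ could be as large as $2t$ and the resulting circumradius bound exceeds $t$ for large $n$, so the final constant degenerates. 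Second, the telescopic form of Theorem~\ref{raddiam} applies only when the \emph{diameter of the configuration}, not the scale $R$, exceeds the threshold $D(\delta)$; if the geodesics $[p,x_i]$ stay uniformly close, the diameter never becomes large no matter how large $R$ is, and this case must be handled separately -- the paper shows that then the truncated geodesics converge to a ray, $f$ is a Busemann function, and $|\nabla_p(-f)|=1$. (A minor related slip: the projections themselves have $d(p,x_i)=d_{X_i}(p)\to\infty$, so one cannot take them as points at distance $\approx R$; one must truncate the geodesics at parameter $R$.) So the geometric idea is the right one, but the limiting argument and the diameter/threshold control need to be redone along the lines above.
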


\begin{proof}
The functions $f_i$ are $1$-Lipschitz and convex (\cite[II.2.5(1)]{Bridson-Haefliger}), hence they are elements
of the space $\mathcal{C}_0$ defined in Section~\ref{spacefunction}. Since $\mathcal{C}_0$ is compact, the
sequence $(f_i)$ indeed sub-converges to a function $f \in \mathcal{C}_0$. It remains to estimate the absolute
gradient of $f$.

Pick a point $p \in X$. By assumption the intersection $\bigcap_i X_i$ is empty. Since bounded closed convex
sets enjoy the finite intersection property (see Section~\ref{sec:Jung:cat0}), it follows that $d_{X_i}(p)$
tends to infinity with $i$. Thus for each $t >0$ there is some $N_t$ such that $d_{X_i}(p) > t$ for all $i \geq
N_t$. We may and shall assume without loss of generality that $N_1=0$.

Let $x_i$ denote the nearest point projection of $p$ to $X_i$ (see \cite[Proposition~II.2.4]{Bridson-Haefliger})
and $\rho_i : [0, d(p, x_i)] \to X$ be the geodesic path joining $p$ to $x_i$. Set
$$D_t = \diam \{\rho_i(t) \; | \; i \geq N_t\}.$$
We distinguish two cases.

Assume first that $\sup_t D_t < \infty$. It then follows that for all $t >0$, the sequence $(\rho_i(t))_{i \geq
N_t}$ is Cauchy. Denoting by $\rho(t)$ its limit, the map  $\rho : t \mapsto \rho(t)$ is a geodesic ray
emanating from $p$. Therefore $f= \lim_i f_i$ is a Busemann function and we have $|\nabla_p(-f)| = 1$. Thus we
are done in this case.

Assume now that $\sup_t D_t = \infty$. Then $D_t$ tends to infinity with $t$. Choose  $\delta >0$ small enough
so that $\sqrt{2} \delta < 1 - \sqrt{\frac{n}{n+1}}$ and let $D>0$ be the constant provided by
Theorem~\ref{raddiam}. We now pick $t$ large enough so that $D_t > D$ and set $y_i = \rho_i(t)$ for all $i \geq
N_t$. For $j> i$ we have $\angle_{x_i}( p,x_j) \geq \frac {\pi} 2$ and considering a comparison triangle for
$\Delta(p , x_i, x_j)$ we deduce $d(y_i,y_j) \leq t \sqrt 2 $. Set $Y=\{y_i\; | \; i \geq 0\}$. By
\tref{raddiam} we have $\rad(Y) \leq t (\sqrt{2}\delta + \sqrt{\frac{n}{n+1}})$.

Let $z$ be the circumcentre of $Y$. We have $d_{X_i}(z) \leq d_{X_i}(y_i) + d(y_i,z)$ and $d(y_i,z)\leq
\rad(Y)$. Since moreover $d_{X_i}(p) = d_{X_i}(y_i) + t$, we deduce
$$\begin{array}{rcl}
f_i (z) -f_i (p)  &= &d_{X_i}(z) - d_{X_i}(p) \\
& \leq & d(y_i,z) - t\\
& \leq & - t (1- \sqrt{\frac{n}{n+1}} -\sqrt{2}\delta)
\end{array}
$$
for each $i \geq 0$. Therefore $f(p)-f(z) \geq \delta' t$, where $\delta' = 1- \sqrt{\frac{n}{n+1}}
-\sqrt{2}\delta$.  On the other hand, we have $d(z,p) \leq d(p,y_i)+d(y_i,z) \leq 2t$, thus
$\frac{f(p)-f(z)}{d(p, z)} \geq \delta'/2$.

Since the restriction of $-f$ to the geodesic segment $[p, z]$ is concave by assumption, we deduce
$$|\nabla_p(- f)| \geq \delta' /2.$$
Finally, recalling that $\delta'= 1- \sqrt{\frac{n}{n+1}} -\sqrt{2}\delta$ and that $\delta>0$ may be chosen
arbitrary small, the desired estimate follows.
\end{proof}

\begin{lem}\label{lem:nested}
Let $X$ be a complete \cat space of finite telescopic dimension and $(X_i)_{i \geq 0}$ be a nested sequence of
closed convex subsets. If $\bigcap_{i \geq 0} X_i$ is empty, then $\bigcap_{i \geq 0} \bd X_i$ is a non-empty
subset of the visual boundary $\bd X$ of intrinsic radius~$\leq \frac{\pi}{2}$.
\end{lem}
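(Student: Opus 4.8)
The plan is to combine Lemma~\ref{lem:gradient} with Propositions~\ref{karlsson} and~\lref{eberle}, the first of which produces a canonical boundary point $\xi_f$ from the gradient flow of the limit function $f$, and the second of which shows that all $f$-monotone points lie within Tits-distance $\pi/2$ of $\xi_f$. So the strategy has three stages: (i) produce the point $\xi_f$ and check it lies in $\bigcap_i \bd X_i$; (ii) show that $\bigcap_i \bd X_i$ consists entirely of $f$-monotone points, so that \lref{eberle} applies; (iii) upgrade "within Tits-distance $\pi/2$ of a single point" to "intrinsic radius $\leq \pi/2$", which is immediate once (i) and (ii) are in place since $\xi_f$ itself belongs to the intersection.

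First I would invoke \lref{lem:gradient}: passing to a subsequence, $f_i = d_{X_i} - d_{X_i}(o)$ converges to a $1$-Lipschitz convex $f$ with $\inf_{p\in X}|\nabla_p(-f)| \geq \tfrac12(1 - \sqrt{n/(n+1)}) > 0$. Hence \pref{karlsson} applies and yields $\xi_f \in \bd X$ to which every gradient curve of $f$ converges. The key geometric point is that $\xi_f \in \bd X_i$ for every $i$. To see this, note that for a fixed $i$ and $j \geq i$ one has $X_j \subseteq X_i$, so $d_{X_i} \leq d_{X_j}$ and hence $f_i(x) - f_i(o) \leq f_j(x) - \cdots$; more to the point, the gradient curve $\phi_t(p)$ of $f$ from a point $p$ moves so that $f(\phi_t(p)) \to -\infty$, which forces $d_{X_i}(\phi_t(p)) \to \infty$ for each fixed $i$ (using that the $f_j$, hence $f$, dominate $f_i$ up to an additive constant on the nested tail, so $f \geq f_i + c_i$ for a constant $c_i$; since $f\to-\infty$ along the curve, so does $f_i$, i.e. $d_{X_i}$ is unbounded along the curve). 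A ray pointing to $\xi_f$ that eventually leaves every bounded neighbourhood of the convex set $X_i$, combined with the fact that the gradient curve is (by the velocity estimate and semi-contraction) a genuine quasi-geodesic — actually one can take the gradient ray based at a point of $X_i$ — exhibits $\xi_f$ as a limit of points whose projections to $X_i$ recede, which is exactly the statement that $\xi_f \in \bd X_i$. Cleanest: fix $i$, take $p \in X_i$; the nearest-point projections $x_j$ of $p$ to $X_j$ lie in $X_i$ for $j\geq i$ and march off to infinity; any subsequential limit direction of the $x_j$ is a boundary point of $X_i$, and a small argument with the definition of $f$ and $\xi_f$ identifies it with $\xi_f$.

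Next, for (ii), let $\xi \in \bigcap_i \bd X_i$ be arbitrary. I claim $\xi$ is $f$-monotone, i.e. $\mathrm{slope}_f(\xi) \leq 0$. Pick a ray $\gamma$ asymptotic to $\xi$; since $\xi \in \bd X_i$ for every $i$, $\gamma$ stays within bounded distance of $X_i$ (a ray whose endpoint lies in $\bd X_i$ is asymptotic to a ray inside the convex set $X_i$, hence at bounded Hausdorff distance from it), so $d_{X_i}\circ\gamma$ is bounded, which gives $\mathrm{slope}_{f_i}(\xi) \leq 0$ for every $i$, and passing to the limit (slopes are lower semicontinuous under the pointwise convergence $f_i \to f$ along the fixed ray, or directly: $f\circ\gamma \leq f_i\circ\gamma + c_i$) yields $\mathrm{slope}_f(\xi)\leq 0$. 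Thus $\bigcap_i \bd X_i \subseteq X_f(\infty)$. Then \lref{eberle} gives $d_{\mathrm{Tits}}(\xi, \xi_f) \leq \pi/2$ for every $\xi$ in the intersection; since $\xi_f$ itself lies in $\bigcap_i \bd X_i$ by (i), the set has intrinsic radius $\leq \pi/2$ (it is contained in the Tits-ball of radius $\pi/2$ centred at one of its own points), and it is non-empty, again by (i).

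The main obstacle I expect is step (i): cleanly proving $\xi_f \in \bigcap_i \bd X_i$ rather than merely $\xi_f \in \bd X$. The subtlety is that $f$ is only a subsequential limit of the $f_i$ and is not literally one of the $d_{X_i}$, so one must carefully transfer information between $f$ and each fixed $f_i$ using the domination $f \geq f_i - f_i(o) + (\text{const})$ valid on the nested tail, and argue that the gradient ray of $f$ (or the sequence of projections $x_j$) genuinely converges to $\xi_f$ in the cone topology while remaining "asymptotic into" each $X_i$. The secondary technical point — that a boundary point of a closed convex subset $Y\subseteq X$ is represented by a ray that stays at bounded distance from $Y$ — is standard for \cat spaces (\cite[II.2]{Bridson-Haefliger}) and will be used in both (i) and (ii).
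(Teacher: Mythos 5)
Your stages (ii) and (iii) -- each $f_i$ is bounded above, hence non-increasing, along any ray with endpoint in $\bigcap_i \bd X_i$, so the pointwise limit $f$ is non-increasing there, and then \lref{eberle} gives the radius bound about $\xi_f$ -- are exactly the paper's argument and are fine. The gap is where you yourself predicted it, in stage (i), and your proposed fix does not work. The domination $f \geq f_i + c_i$ is false: nestedness only gives, for $j \geq i$, $f_j \geq f_i + \bigl(d_{X_i}(o) - d_{X_j}(o)\bigr)$, and since $\bigcap_i X_i = \emptyset$ forces $d_{X_j}(o) \to \infty$, the additive constant degenerates and nothing survives in the limit. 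Concretely, take $X = \RR$, $X_i = [i,\infty)$, $o = 0$: then $f_i(x) = \max(-x,-i)$, $f(x) = -x$, and along the gradient curve $\phi_t(p) = p+t$ one has $f \to -\infty$ while $f_i$ stays bounded below and $d_{X_i}(\phi_t(p)) \to 0$, not $\infty$. So your intermediate claim fails, and it also points in the wrong direction: to conclude $\xi_f \in \bd X_i$ you need the gradient curve to stay \emph{near} (in fact inside) $X_i$, not to recede from it -- a sequence whose distance to $X_i$ tends to infinity gives no information about membership of its limit in $\bd X_i$. Your ``cleanest'' fallback has a second gap: in a non-proper space the projections $x_j \in X_i$ of $p$ to $X_j$ need not admit any subsequential limit direction (there is no compactness to extract one -- this is precisely the difficulty the whole paper is built to circumvent), and even granting a limit, no argument is given identifying it with $\xi_f$; a priori the sequence $(x_j)$ need not converge to any boundary point at all.

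The paper's route for (i) is different and is the missing ingredient: fix $i$, let $h = f|_{X_i}$, and show $|\nabla_p(-f)| = |\nabla_p(-h)|$ for every $p \in X_i$. This follows from the projection inequality: if $x \in X$ has nearest-point projection $x_i$ to $X_i$, then for $j \geq i$ one has $d_{X_j}(x) \geq d_{X_j}(x_i)$ and $d(p,x) \geq d(p,x_i)$, whence $\frac{f_j(p)-f_j(x)}{d(p,x)} \leq \frac{f_j(p)-f_j(x_i)}{d(p,x_i)}$, and the same inequality passes to the limit function $f$. By uniqueness of gradient curves, the gradient curve of $f$ issued from a point $p \in X_i$ coincides with the gradient curve of $h$, hence remains in the closed convex set $X_i$, and therefore its limit $\xi_f$ from \pref{karlsson} lies in $\bd X_i$. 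With this substitution for your stage (i), the remainder of your write-up can stand as is.
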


\begin{proof}
Let $\phi _t: X \to X$ denote the gradient flow associated to the convex function $f$ defined as in
Lemma~\ref{lem:gradient}. \pref{karlsson} provides some point $\xi$ in the ideal boundary $\partial X$ such that
the gradient line $t \mapsto \phi _t (p)$ converges to $\xi$  for any starting point $p\in X$.

We claim that $\xi$ is contained in $\partial X_i$ for each $i$. To this end, we fix an index $i$ and consider
the restriction $h$ of $f$ to $X_i$. This is a convex function on $X_i$  and it is sufficient to prove that the
gradient  flow of $h$ coincides with the gradient flow of $f$ starting at any  point of $X_i$.  Hence it is
enough to prove that for all $p \in X_i$ the equality $|\nabla _p(-f)|=|\nabla _p(-h)|$ holds.

Pick a point $x \in X$ and let $x_i$ denote the nearest point projection of $x$ to $X_i$. We have $ d_{X_j}(x)
\geq d_{X_j}(x_i) $ and $d(p,x)\geq d(p, x_i)$ for all $p \in X_i$. Hence for $p \in X_i$ and all $j\geq i$ we
get the inequality
$$\frac {f _j(p) - f_j(x)}{d(p, x)} \leq   \frac {f_j (p) - f_j (x_i)}{d(p, x_i)}.$$
Hence the same is true for the limiting function $f$, which implies the desired equality $|\nabla _p
(-f)|=|\nabla _p(-h)|$. This shows that $\xi$ is contained in the intersection $\bigcap_i \partial X_i$, which
is thus non-empty.

\smallskip%
For any geodesic ray $\eta$ in $X$ with endpoint in $\bigcap_i \partial X_i$, the restriction of $f_i $  to
$\eta$ is bounded from above, hence non-increasing. Therefore the same holds true for the restriction of the
limiting function $f$ to the ray $\eta$. In other words the endpoint of $\eta$ is $f$-monotone.  From
\lref{eberle} we deduce that $d(\xi, \psi ) \leq \pi /2$ for all $\psi \in  \cap \partial X_i$.
\end{proof}

\begin{proof}[Proof of \tref{filtering}]
Pick a base point $o \in X$. If the set $\{d_{X_\alpha}(o)\}_{\alpha \in A}$ is bounded, then $\bigcap_\alpha
X_\alpha$ has a non-empty intersection by the finite intersection property (see Section~\ref{sec:Jung:cat0}). We
assume henceforth that this is not the case. In particular there exists a sequence of indices $(\alpha_n)_{n
\geq 0}$ such that $\lim_n d_{X_{\alpha_n}}(o) = \infty$. Now for each $\alpha \in A$, we may apply
Lemma~\ref{lem:nested} to the nested sequence $(X_\alpha \cap X_{\alpha_n})_{n \geq 0}$. This shows that
$Y_\alpha = \bigcap_{n \geq 0} \bd (X_\alpha \cap X_{\alpha_n})$ is a non-empty subset of intrinsic radius~$\leq
\frac{\pi}{2}$ of $\bd X$. Notice that $\{Y_\alpha\}_{\alpha \in A}$ is a filtering  family.
Proposition~\ref{dimension} then allows one to appeal to Proposition~\ref{prop:cat1:filtering}, which shows that
$\bigcap_\alpha Y_\alpha$ is a non-empty subset of intrinsic radius~$\leq \frac{\pi}{2}$. This provides the
desired statement since $\bigcap_\alpha \bd X_\alpha = \bigcap_\alpha Y_\alpha$.
\end{proof}

We end this section by an example illustrating that Theorem~\ref{filtering} fails if one assumes only that the
Tits boundary $\bd X$ be finite-dimensional.

\begin{ex}\label{ex:Hilbert}
Let $\mathcal{H}$ be a separable (real) Hilbert space with
orthonormal basis $\{e_i\}$ and $X \subset \mathcal{H}$ be the
subset consisting of all points $\sum_i a_i e_i$ with $|a_i|\leq i$
for all $i$. Thus $X$ is a closed convex subset of $\mathcal{H}$
with \emph{empty} (hence finite-dimensional) ideal boundary. Let now
$X_n = \{\sum_i a_i e_i \in X \; | \; a_i \geq 1 \text{ for all }
i\leq n\}$. Then $\{X_n\}$ is a nested family of closed convex
subsets with empty intersection.
\end{ex}

\section{Applications}

\subsection{Parabolic isometries}

\begin{proof}[Proof of \cref{FixedPointParabolic}]
By Proposition~\ref{dimension}, the boundary $\bd X$ is finite-dimensional. The sublevel sets of the
displacement function of $g$ define a $\centra_{\Isom(X)}(g)$-invariant nested sequence of closed convex
subspace. The intersection of their boundaries is nonempty by Theorem~\ref{filtering} and possesses a barycentre
by \cite[Prop.~1.4]{BalserLytchak_Centers}, which is the desired fixed point.
\end{proof}

\subsection{Minimal and reduced actions}

We begin with a de Rham type decomposition property. It was shown by Foertsch--Lytchak \cite{FoertschLytchak06}
that any finite-dimensional \cat space (and more generally any geodesic metric space of finite affine rank)
admits a canonical isometric splitting into a flat factor and finitely many non-flat irreducible factors.
Building upon \cite{FoertschLytchak06}, it was then shown by Caprace--Monod
\cite[Corollary~4.3(ii)]{CapraceMonod} that the same conclusion holds for proper \cat spaces whose isometry
group acts minimally, assuming that the Tits boundary is finite-dimensional. We shall need the following
`improper' variation of this result.

\begin{prop}\label{FoertschLytchak}
Let $X$ be a complete \cat space of finite telescopic dimension, such that $\Isom(X)$ acts minimally. Then there
is a canonical maximal isometric splitting
$$\RR^n \times X_1 \times \cdots \times X_m$$
where each $X_i$ is irreducible, unbounded and~$\not \cong \RR$. Every isometry preserves this decomposition
upon permuting possibly isometric factors $X_i$.
\end{prop}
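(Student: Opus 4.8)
The plan is to follow the approach of \cite[Corollary~4.3(ii)]{CapraceMonod}, building the decomposition in two stages: first split off the Euclidean factor, then apply the de~Rham decomposition of \cite{FoertschLytchak06} to what remains.

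By \cite[II.6.15(6)]{Bridson-Haefliger}, $X$ admits a canonical isometric splitting $X = \mathbb{E} \times X'$ with $\mathbb{E}$ a Hilbert space (the Euclidean factor) and $X'$ of trivial Euclidean factor, and $\Isom(X) = \Isom(\mathbb{E}) \times \Isom(X')$ acting coordinate-wise. As a closed convex subspace, $\mathbb{E}$ has telescopic dimension at most that of $X$: convex subsets inherit geometric dimension bounds, and an asymptotic cone of $\mathbb{E}$ embeds as a convex subset in an asymptotic cone of $X$. Since an infinite-dimensional Hilbert space contains a flat isometric to $\RR^k$ for every $k$, hence so do its asymptotic cones, its telescopic dimension is infinite; therefore $\mathbb{E} = \RR^n$ with $n < \infty$. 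Minimality of $\Isom(X)$ on $X$ then forces minimality of $\Isom(X')$ on $X'$, since a proper non-empty closed convex $\Isom(X')$-invariant subset $Y' \subsetneq X'$ would yield the proper $\Isom(X)$-invariant subset $\RR^n \times Y' \subsetneq X$. Moreover, if $X'$ is bounded it has a unique circumcentre $c$, fixed by $\Isom(X')$, so $\RR^n \times \{c\}$ is an $\Isom(X)$-invariant closed convex subset and minimality gives $X' = \{c\}$, i.e.\ $X = \RR^n$ (the case $m = 0$); hence I may assume $X'$ unbounded, with trivial Euclidean factor and minimal isometry group.

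I would then apply the de~Rham decomposition of \cite{FoertschLytchak06} to $X'$: it provides a canonical isometric splitting into a flat factor and finitely many non-flat irreducible factors once $X'$ is known to have finite affine rank, and this finiteness is the one place where the hypotheses are really used. A product of $k$ unbounded complete \cat spaces has as asymptotic cone a product of $k$ unbounded complete \cat spaces, each containing a nondegenerate segment, so the asymptotic cone contains a topological $k$-cube; thus a space of finite telescopic dimension cannot contain, as a convex subspace, a product of arbitrarily many unbounded factors. Together with the absence of bounded factors --- a bounded factor would be collapsed to its circumcentre to produce a proper invariant closed convex subset, contradicting minimality --- this bounds the number of factors, so $X'$ has finite affine rank. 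Thus \cite{FoertschLytchak06} yields $X' = X_1 \times \cdots \times X_m$ with each $X_i$ irreducible, the flat factor being trivial since the Euclidean factor of $X'$ is. No $X_i$ is isometric to $\RR$, for such a factor would be a non-trivial Euclidean factor of $X'$; and no $X_i$ is bounded, for collapsing it to its circumcentre --- now using that isometries permute the factors of the canonical decomposition just obtained --- would again contradict minimality. Hence $X = \RR^n \times X_1 \times \cdots \times X_m$ with the $X_i$ irreducible, unbounded and $\not \cong \RR$, and every isometry of $X$ preserves $\RR^n$ and permutes the $X_i$ among mutually isometric ones, by the canonicity of the Euclidean splitting \cite[II.6.15(6)]{Bridson-Haefliger} and of the de~Rham decomposition of \cite{FoertschLytchak06}.

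The step I expect to be the main obstacle is the verification that $X'$ satisfies the finiteness hypothesis of \cite{FoertschLytchak06}: turning the coarse input (finite telescopic dimension) together with minimality into an honest bound on the number of irreducible factors, and in particular making rigorous the elimination of bounded factors before the canonical de~Rham decomposition is available. Everything else is an assembly of the canonical Euclidean splitting, the de~Rham decomposition, and routine uniqueness bookkeeping.
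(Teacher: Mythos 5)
Your reduction to the Euclidean-factor splitting, the finiteness of $\RR^n$ via asymptotic cones, the transfer of minimality to $X'$, and the bound on the number of unbounded factors in any splitting (a product of $k$ unbounded factors yields a $k$-cube, hence a compact subset of topological dimension $k$, in an asymptotic cone) are all correct; the last point is essentially the paper's own finiteness step, which is carried out there by embedding the rescaled simplices $C_k = \mathrm{Conv}(\{0\}\cup\{2^ie_i\})$ into the cone. The gap is exactly at the step you flagged: you cannot pass from ``at most $n$ unbounded factors in any decomposition'' to ``$X'$ has finite affine rank'' and then apply \cite{FoertschLytchak06} as a black box. The affine rank is the supremal dimension of affinely embedded convex pieces, and finite telescopic dimension gives no control over it: bounded convex subsets are invisible to asymptotic cones, so a space of finite telescopic dimension (even an irreducible one, even one with minimal isometry group as far as your argument shows) may contain bounded flat convex subsets of arbitrarily large dimension and hence have infinite affine rank. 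Your sentence ``this bounds the number of factors, so $X'$ has finite affine rank'' is a non sequitur, and no argument is offered for the implication. A secondary defect in the same step: to discard a bounded factor $B$ in an \emph{arbitrary} splitting $X'=B\times Z$ you collapse $B$ to its circumcentre and invoke minimality, but $\{c_B\}\times Z$ is only known to be invariant under isometries preserving that splitting, which is not available before a canonical decomposition exists; you correctly use canonicity for this purpose at the end, but at this earlier point the argument is circular.

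The paper avoids both problems by never verifying the hypothesis of \cite{FoertschLytchak06} for $X$ itself: it only proves the existence of a \emph{maximal} isometric splitting into finitely many unbounded factors (the asymptotic-cone argument you also give), and then, using Proposition~\ref{dimension} to know that the Tits boundary is finite dimensional, runs the proof of \cite[Corollary~4.3(ii)]{CapraceMonod} verbatim; it is that argument, built on the finite-dimensional boundary rather than on finite affine rank of $X$, which delivers canonicity and the behaviour of isometries. To repair your proof you would either have to prove that minimality plus finite telescopic dimension forces finite affine rank (not established, and not what your cube argument shows), or replace the direct appeal to \cite{FoertschLytchak06} by the boundary-based argument of \cite{CapraceMonod} as the paper does.
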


\begin{proof}
Let $\mathcal{H}$ be a separable Hilbert space with orthonormal basis $\{e_i\}_{i >0}$ and denote by $C_k$ the
convex hull of the set $\{0\} \cup \{2^i e_i \; | \; 0< i \leq  k\}$. Let now $X$ be a \cat space such that for
every isometric splitting $X = X_1 \times \cdots  \times X_p$ with each $X_i$ unbounded, some factor $X_i$
admits an isometric splitting $X_i = X_i' \times X_i''$ with unbounded factors. Then there is a point $o \in X$
and for each $k >0$ an isometric embedding $\varphi_k : C_k \to X$ with $\varphi(0) = o$. Since for all $k>0$
the set $2.C_k$ embeds isometrically in $C_{k+1}$, it follows that $C_k$ embeds isometrically in the asymptotic
cone $\lim_\omega (\frac{1}{n}X, o)$. In particular $X$ does not have finite telescopic dimension. This shows
that any \cat space of finite telescopic dimension admits a \emph{maximal} isometric splitting into a product of
finitely many unbounded (necessarily irreducible) subspaces.

In view of the latter observation and given Proposition~\ref{dimension}, the proof of
\cite[Corollary~4.3(ii)]{CapraceMonod} applies \emph{verbatim} and yields the desired conclusion.
\end{proof}

\begin{proof}[Proof of \pref{MinimalReduced}]
(i)  We claim that the statement of (i) follows from (ii) and (iii). Indeed, if $G$ has no fixed point at
infinity, then there is a minimal non-empty $G$-invariant subspace $Y \subseteq X$ by (ii). Upon replacing $G$
by a finite index subgroup, this subspace $Y$ admits a $G$-equivariant decomposition as in
Proposition~\ref{FoertschLytchak}. The induced action of $G$ on each of these spaces is minimal without fixed
point at infinity. Therefore, it is non-evanescent by (iii), unless $Y$ is bounded, in which case it is reduced
to a single point by $G$-minimality. This means that $G$ fixes a point in $X$.

\smallskip \noindent%
(ii) Assume that $G$ has no minimal invariant subspace. By Zorn's lemma this implies that there is a chain of
$G$-invariant subspaces with empty intersection. By Theorem~\ref{filtering} the intersection of the boundaries
at infinity of the subspaces in this chain provide a closed convex $G$-invariant set $Y \subseteq \bd X$ of
radius $\leq \frac \pi 2$. By Proposition~\ref{dimension}, the set $Y$ is finite-dimensional. Hence it possesses
a unique barycentre by \cite[Prop.~1.4]{BalserLytchak_Centers}, which is thus fixed by $G$.

\smallskip %
By Proposition~\ref{dimension} the boundary $\bd X$ is finite-dimensional. Therefore, for (iii) and (iv),
Theorem~\ref{filtering} (in fact, Lemma~\ref{lem:nested} is sufficient) allows one to repeat \emph{verbatim} the
proofs of the corresponding statements that are given in \cite{CapraceMonod}, namely Theorem~1.6 in
\emph{loc.~cit.} for the fact that normal subgroups act minimally without fixed point at infinity, Corollary~2.8
in \emph{loc.~cit.} for the fact that the $G$-action is reduced and Proposition~1.3(i) in \emph{loc.~cit.} for
the fact that $X$ is boundary-minimal provided $\Isom(X)$ acts minimally.
\end{proof}

\begin{proof}[Proof of \cref{superrigidity}]
By Proposition~\ref{FoertschLytchak} the space $X$ admits a canonical  decomposition as a product of finitely
many irreducible factors. The lattice $\Gamma$ admits a finite index normal subgroup $\Gamma^*$ which acts
componentwise on this decomposition (the  finite quotient $\Gamma/\Gamma^*$ acts by permuting possibly isometric
irreducible factors). Let $G_i^*$ be the closure of the projection of $\Gamma^*$ to $G_i$ and set $G^*= G_1^*
\times \cdots \times G^*_n$. Thus $G^*$ is a closed normal subgroup of finite index of $G$ and we have $G =
\Gamma \cdot G^*$. In particular it is sufficient to show that the $\Gamma^*$-action extends to a continuous
$G^*$-action. To this end, we work one irreducible factor at a time. Given
Proposition~\ref{MinimalReduced}(iii), the desired continuous extension is provided by
\cite[Theorem~6]{Monod_superrigid}.
\end{proof}

\subsection{Isometric actions of amenable groups}

\begin{proof}[Proof of \tref{AdamsBallmann}]
Assume that $G$ has no  fixed point at infinity. Thus there is a minimal closed  convex invariant subset by
Proposition~\ref{MinimalReduced}(ii) and we may assume that this subset coincides with $X$. In other words $G$
acts minimally on $X$. Let $X= \mathbb{E} \times X'$ be the maximal Euclidean decomposition (see
\cite[II.6.15(6)]{Bridson-Haefliger}). Thus $G$ preserves the splitting  $X= \mathbb{E} \times X'$ and the
induced $G$-action on both $\mathbb{E}$ and $X'$ is minimal and does not fix any point at infinity. We need to
show that $X'$ is reduced to a single point. To this end, it is thus sufficient to establish the following
claim.

\emph{If an amenable locally compact group $G$ acts continuously, minimally  and without fixed points at
infinity on a \cat space $X$ of finite telescopic dimension without Euclidean factor, then $X$ is reduced to a
single point.}

\smallskip%
Assume that this is not the case. Pick a base point $o \in X$ and consider the spaces $\mathcal{C} \subset
\mathcal{C}_0$ defined in Subsection~\ref{spacefunction}.  Let $\mathcal A$ denote the closed convex hull of
$\overline{ \mathcal{C}}$ in the locally convex topological vector space $\mathcal B$ of all functions vanishing
at $o$. By Proposition~\ref{FoertschLytchak} the subset $\overline{\mathcal{C}}$ does not contain any affine
function. It follows from Lemma~\ref{nonaffine} that $\mathcal{A}$ does not contain any affine function either.
The induced action of $G$ on $\mathcal B$ is continuous and preserves the compact convex set $\mathcal A$. By
the definition of amenability $G$ has a fixed point in $\mathcal A$. Thus we have found some \emph{non-constant}
$1$-Lipschitz convex function $f$ which is \textbf{quasi-invariant} with respect to $G$ in the sense that, for
each $g \in G $, one has  $f(gx)= f(x)+ f(g o)$. (In other words, this means that for each $g$, the map $x
\mapsto f(gx)-f(x)$ is constant.) The following lemma, analogous to \cite[Lemma~2.4]{Adams-Ballmann}, implies
that $G$ has a fixed point at infinity, which is absurd.
\end{proof}

\begin{lem}
Let a group $G$ act minimally by isometries on a complete \cat space $X$ of finite telescopic dimension. There
is a $G$-quasi-invariant continuous non-constant convex function $f$ on $X$ if and only if $G$ fixes a point in
$\partial X$.
\end{lem}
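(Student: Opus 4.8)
The plan is to prove the two implications separately; the ``if'' direction is elementary and uses neither finite telescopic dimension nor minimality. If $G$ fixes a point $\xi\in\partial X$, I would take for $f$ a Busemann function $b_\xi$ associated to a geodesic ray $\gamma$ pointing to $\xi$, normalised so that $b_\xi(o)=0$. It is convex, $1$-Lipschitz and non-constant (it is strictly decreasing along $\gamma$). For $g\in G$, the function $x\mapsto b_\xi(gx)$ is the Busemann function of the ray $g^{-1}\gamma$, which still points to $g^{-1}\xi=\xi$; since any two Busemann functions centred at the same boundary point of a \cat space differ by an additive constant, $x\mapsto b_\xi(gx)-b_\xi(x)$ is constant, i.e.\ $b_\xi$ is $G$-quasi-invariant.

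For the converse, let $f$ be $G$-quasi-invariant, continuous, non-constant and convex, with $f(o)=0$ (which we may assume after subtracting a constant). Writing $c(g)\in\RR$ for the constant value of $x\mapsto f(gx)-f(x)$, a one-line cocycle computation shows that $c\colon G\to\RR$ is a homomorphism with $c(g)=f(go)$. First I would observe that $c$ is non-trivial: if $c\equiv 0$ then $f$ is $G$-invariant, and any sublevel set $\{x\in X: f(x)\le t\}$ with $\inf f<t<\sup f$ is a non-empty, proper, closed, convex, $G$-invariant subset of $X$, contradicting minimality. Picking $g_0$ with $c(g_0)>0$ and iterating, $f(g_0^{-n}o)=-n\,c(g_0)\to-\infty$, so $f$ is unbounded below; hence the sublevel sets $C_t=\{x\in X: f(x)\le t\}$, $t\in\RR$, form a nested family of non-empty closed convex subsets of $X$ with $\bigcap_{t\in\RR}C_t=\emptyset$.

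Now I would apply \tref{filtering} to this nested family: it gives that $\Xi:=\bigcap_{t\in\RR}\partial C_t$ is a non-empty closed convex subset of the Tits boundary $\partial X$ of intrinsic radius at most $\pi/2$. The point to get right is that $\Xi$ is $G$-invariant although the individual $C_t$ are not: since $y\in gC_t$ iff $f(y)-c(g)\le t$, the isometry $g$ maps $C_t$ onto $C_{t+c(g)}$, hence $g\cdot\partial C_t=\partial C_{t+c(g)}$ and therefore $g\cdot\Xi=\bigcap_{t}\partial C_{t+c(g)}=\bigcap_{s}\partial C_s=\Xi$, the translation by $c(g)$ being absorbed in the re-indexing over all of $\RR$. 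Finally, $\partial X$ is finite-dimensional by \pref{dimension}, so $\Xi$ is a non-empty closed convex subset of a finite-dimensional \catone space of intrinsic radius $\le\pi/2$; by \cite[Proposition~1.4]{BalserLytchak_Centers} it has a canonical barycentre $\xi_0\in\partial X$, and canonicity together with the $G$-invariance of $\Xi$ forces $G$ to fix $\xi_0$.

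The essential input is \tref{filtering} (with \pref{dimension} and the barycentre construction), and everything else is bookkeeping; still, the two delicate spots are (a) ensuring we fall into the \emph{empty-intersection} alternative of \tref{filtering} --- this is precisely where non-triviality of $c$, and hence the minimality hypothesis, is used, for otherwise $\bigcap_t C_t$ could be a non-empty minset carrying no boundary information --- and (b) the observation that $G$ merely re-indexes the family $\{C_t\}$ by the translation $c(g)$, which is what makes $\Xi$ $G$-invariant. One also needs that $\partial C_t$ is Tits-convex, so that $\Xi$ is convex, which is a standard fact about boundaries of convex subsets of \cat spaces.
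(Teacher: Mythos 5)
Your proof is correct and follows essentially the same route as the paper: Busemann function for the ``if'' direction, and for the converse the homomorphism $g\mapsto f(gx)-f(x)$, non-triviality via minimality, the sublevel sets forming a nested family with empty intersection that $G$ permutes, Theorem~\ref{filtering} giving a non-empty $G$-invariant boundary set of radius $\leq \pi/2$, and the barycentre of \cite{BalserLytchak_Centers} yielding the fixed point. Your extra bookkeeping (the re-indexing $gC_t=C_{t+c(g)}$ and the choice $\inf f<t<\sup f$) only makes explicit what the paper leaves implicit.
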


\begin{proof}
If $G$ fixes a point in $\partial X$, then the Busemann function of this point (that is uniquely defined up to a
positive constant) is quasi-invariant.

Assume that $f$ is quasi-invariant and define $a:G\to \mathbb R$ by $a(g)=f(gx)-f(x)$. By assumption $a$ does
not depend on  $x$; furthermore $a$ is a homomorphism. If $a$ were constant, then $f$ would be $G$-invariant
and, hence, so would be any sub-level set of $f$. This contradicts the minimality assumption on the $G$-action.
Therefore $a$ is non-constant; more precisely the image of $a$ is unbounded and $\inf f=-\infty$. For each $r
\in \RR$ set $X_r :=\phi^{-1} (-\infty, -r]$. Then $(X_r)_{r \in R}$ is a chain of closed convex subspaces with
empty intersection; furthermore every element of $G$ permutes the sets $X_r$. It follows that $C=\bigcap_{r \in
\RR} \partial X_r $ is $G$-invariant. Theorem~\ref{filtering} now shows that $C$ is nonempty of radius $\leq
\frac{\pi}{2}$, and \cite[Prop.~1.4]{BalserLytchak_Centers} implies that $G$ fixes a point in $C \subset \bd X$.
\end{proof}

\begin{proof}[Proof of Theorem~\ref{cell}]
The proof mimicks the arguments given in \cite{CapraceTD}; we do not reproduce all the details. As in
\emph{loc.~cit.} the key point is to establish that every point of the \textbf{refined boundary} $\bdfine X$
(defined in \emph{loc.~cit.}, \S4.2) has an amenable stabiliser in $G$ and that, conversely, any amenable
subgroup of $G$ possesses a finite index subgroup which fixes a point in $X \cup \bdfine X$. The proof that
amenable groups stabilise point in $X \cup \bdfine X$ uses \tref{AdamsBallmann} together with an induction on
the geometric dimension (see the remark following Corollary~4.4 in \emph{loc.~cit.} showing that there is a
uniform upper-bound on the level of a point in the refined boundary). For the converse, one shows directly that
the $G$-stabiliser of a point in $\bdfine X$ is (topologically locally finite)-by-(virtually Abelian); the
cocompactness argument used in Proposition~4.5 of \emph{loc.~cit.} is replaced by a compactness argument relying
on the hypothesis that $X$ has finitely many types of cells, all of which are compact.
\end{proof}

\begin{proof}[Proof of \cref{radical}]
By~Proposition~\ref{FoertschLytchak}, the space $X$ admits a canonical  decomposition as a product of a maximal
Euclidean factor and a finite number of irreducible non-Euclidean factors. The Euclidean factor is $G$-invariant
and $G$ possesses a closed normal subgroup of finite index $G^*$ that acts componentwise on the above product.
By hypothesis, the $G^*$-action on each non-Euclidean factor is minimal and does not fix any point at infinity.
\tref{AdamsBallmann} and Proposition~\ref{MinimalReduced}(iii) therefore imply that the amenable radical of
$G^*$ acts trivially. This implies that the amenable radical of $G$ acts as a finite group on the product of all
non-Euclidean factors of $X$. Thus this action is trivial since $G$ acts minimally.
\end{proof}

\begin{bibdiv}
\begin{biblist}
\bib{Adams-Ballmann}{article}{
author={Adams, Scot},
   author={Ballmann, Werner},
   title={Amenable isometry groups of Hadamard spaces},
   journal={Math. Ann.},
   volume={312},
   date={1998},
   number={1},
   pages={183--195},
   }
\bib{BGS95}{book}{
   author={Ballmann, Werner},
   author={Gromov, Mikhael},
   author={Schroeder, Viktor},
   title={Manifolds of nonpositive curvature},
   series={Progress in Mathematics},
   volume={61},
   publisher={Birkh\"auser Boston Inc.},
   place={Boston, MA},
   date={1985},
   }
\bib{Bridson-Haefliger}{book}{
   author={Bridson, Martin R.},
   author={Haefliger, Andr{\'e}},
   title={Metric spaces of non-positive curvature},
   series={Grundlehren der Mathematischen Wissenschaften [Fundamental
   Principles of Mathematical Sciences]},
   volume={319},
   publisher={Springer-Verlag},
   place={Berlin},
   date={1999},
   }
\bib{BalserLytchak_Centers}{article}{
   author={Balser, Andreas},
   author={Lytchak, Alexander},
   title={Centers of convex subsets of buildings},
   journal={Ann. Global Anal. Geom.},
   volume={28},
   date={2005},
   number={2},
   pages={201--209},
}
\bib{BurgerSchroeder}{article}{
   author={Burger, M.},
   author={Schroeder, V.},
   title={Amenable groups and stabilizers of measures on the boundary of a
   Hadamard manifold},
   journal={Math. Ann.},
   volume={276},
   date={1987},
   number={3},
   pages={505--514},
   }
\bib{CapraceTD}{article}{
   author={Caprace, Pierre-Emmanuel},
   title={Amenable groups and Hadamard spaces with a totally disconnected
   isometry group},
   journal={Comment. Math. Helv.},
   volume={84},
   date={2009},
   number={2},
   pages={437--455},
   }
\bib{CapraceMonod}{article}{
   author={Caprace, Pierre-Emmanuel},
   author={Monod, Nicolas},
   title={Isometry groups of non-positively curved spaces: structure theory},
   journal={J. Topol.},
   volume={2},
   date={2009},
   number={4},
   pages={661--700},
   }
\bib{Dugundji}{article}{
   author={Dugundji, J.},
   title={Maps into nerves of closed coverings},
   journal={Ann. Scuola Norm. Sup. Pisa (3)},
   volume={21},
   date={1967},
   pages={121--136},
   }
\bib{Eberlein}{book}{
   author={Eberlein, Patrick B.},
   title={Geometry of nonpositively curved manifolds},
   series={Chicago Lectures in Mathematics},
   publisher={University of Chicago Press},
   place={Chicago, IL},
   date={1996},
   }

\bib{Farb:Helly}{article}{
   author={Farb, Benson},
   title={Group actions and Helly's theorem},
   journal={Adv. Math.},
   volume={222},
   date={2009},
   number={5},
   pages={1574--1588},
   }
\bib{FoertschLytchak06}{article}{
   author={Foertsch, Thomas},
   author={Lytchak, Alexander},
   title={The de Rham decomposition theorem for metric spaces},
   journal={Geom. Funct. Anal.},
   volume={18},
   date={2008},
   number={1},
   pages={120--143},
   }
\bib{Nagano}{article}{
   author={Fujiwara, Koji},
   author={Nagano, Koichi},
   author={Shioya, Takashi},
   title={Fixed point sets of parabolic isometries of CAT(0)-spaces},
   journal={Comment. Math. Helv.},
   volume={81},
   date={2006},
   number={2},
   pages={305--335},
   }
\bib{Jung}{article}{
author={Jung, H.},
title={{\"U}ber die kleinste {K}ugel, die eine r{\"a}umliche {F}igur   einschliesst},
journal={J. reine angew. Math.}, 
volume={123},
date={1901},
pages={241--257},
}
\bib{Kleiner}{article}{
   author={Kleiner, Bruce},
   title={The local structure of length spaces with curvature bounded above},
   journal={Math. Z.},
   volume={231},
   date={1999},
   number={3},
   pages={409--456},
   }
\bib{KarlssonMargulis}{article}{
   author={Karlsson, Anders},
   author={Margulis, Gregory A.},
   title={A multiplicative ergodic theorem and nonpositively curved spaces},
   journal={Comm. Math. Phys.},
   volume={208},
   date={1999},
   number={1},
   pages={107--123},
   }
\bib{LangSchroeder}{article}{
   author={Lang, Urs},
   author={Schroeder, Viktor},
   title={Jung's theorem for Alexandrov spaces of curvature bounded above},
   journal={Ann. Global Anal. Geom.},
   volume={15},
   date={1997},
   number={3},
   pages={263--275},
   }
\bib{LytchakSchroeder}{article}{
   author={Lytchak, Alexander},
   author={Schroeder, Viktor},
   title={Affine functions on ${\rm CAT}(\kappa)$-spaces},
   journal={Math. Z.},
   volume={255},
   date={2007},
   number={2},
   pages={231--244},
   }
\bib{Lytchak:open}{article}{
   author={Lytchak, A.},
   title={Open map theorem for metric spaces},
   journal={Algebra i Analiz},
   volume={17},
   date={2005},
   number={3},
   pages={139--159},
   }
\bib{Lytchak:rigidity}{article}{
   author={Lytchak, A.},
   title={Rigidity of spherical buildings and joins},
   journal={Geom. Funct. Anal.},
   volume={15},
   date={2005},
   number={3},
   pages={720--752},
}
\bib{Mayer}{article}{
   author={Mayer, Uwe F.},
   title={Gradient flows on nonpositively curved metric spaces and harmonic
   maps},
   journal={Comm. Anal. Geom.},
   volume={6},
   date={1998},
   number={2},
   pages={199--253},
   }
\bib{Monod_superrigid}{article}{
   author={Monod, Nicolas},
   title={Superrigidity for irreducible lattices and geometric splitting},
   journal={J. Amer. Math. Soc.},
   volume={19},
   date={2006},
   number={4},
   pages={781--814},
}
\bib{Sharaf}{article}{
   author={{\v{S}}arafutdinov, V. A.},
   title={The Pogorelov-Klingenberg theorem for manifolds that are
   homeomorphic to ${\bf R}^{n}$},
   language={Russian},
   journal={Sibirsk. Mat. \v Z.},
   volume={18},
   date={1977},
   number={4},
   pages={915--925, 958},
   }
\end{biblist}
\end{bibdiv}

\newpage

\appendix

\pagestyle{empty}

\begin{center}
\textbf{\large{Erratum to\\ `At infinity of finite-dimensional CAT(0) spaces'}}

\medskip
 Pierre-Emmanuel Caprace and Alexander Lytchak

\medskip
First draft December 2012; revised May 2014
\end{center}

\bigskip

A gap in the proofs of Propositions~1.8 and~6.1 from our paper~\cite{CL} was pointed out to us by Pierre Py: we overlooked the possibility that a complete \cat space $X$ of finite telescopic dimension could be unbounded, have an isometry group acting minimally, and nevertheless have an empty visual boundary. Although it is still not clear to us whether this situation can actually occur\footnote{This question has recently been answered negatively by Bader--Duchesne--L\'ecureux in \cite[Th.~1.2]{BDL}. This fills in the gap from \cite{CL} in an optimal way, thereby making the arguments developed in the present erratum obsolete. This note is therefore not intended for publication.}, we show in this note how   to overcome the question and complete the proofs which were flawed in the original paper (only the proofs of Prop.~1.8 and~6.1, and of Corollaries 1.9 and 1.10, which were all given in the final section of the original paper~\cite{CL}, are concerned by these adjustments). Besides Proposition~1.8 which has to be slightly corrected (see Proposition~1 below), all the other statements of the paper are correct and remain unchanged.

We thank Pierre Py for pointing out the gap. We are also grateful to him for letting us know  of the unpublished manuscript \cite{KS99} by Korevaar--Schoen, which we were not aware of while writing our paper. In  \cite{KS99}, a class of \cat spaces called \textbf{FR-spaces}, is introduced. It turns out that a \cat space is an FR-space in the sense of   Korevaar--Shoen if and only if it has finite telescopic dimension in the sense of our paper~\cite{CL}: this equivalence can be deduced from Theorem~1.3 from~\cite{CL}.  Given this equivalence, one sees that Proposition~1 from  \cite{KS99} overlaps somewhat Theorem~1.1 from~\cite{CL}. The condition of finite telescopic dimension seems however somewhat easier to check in concrete settings; for example it was used by B.~Duchesne \cite{Duc}, in combination with the aforementioned equivalence, to show that infinite-dimensional symmetric spaces of finite rank are FR-spaces, thereby answering a question asked by Korevaar--Shoen in  \cite{KS99}. 

\medskip
By convention, all the numbered references of the form `Theorem~1.1'  in this note refer to the corresponding results from \cite{CL}.  We moreover retain the terminology used in loc.~cit. Proposition 1.8 must be replaced by the following.

\begin{propapp}\label{app:1.8}
Let $X$ be a complete \cat space of finite telescopic dimension and $G < \Isom(X)$ be any group of isometries. 

\begin{enumerate}[(i)]
\item If the $G$-action is minimal, then it is evanescent if and only if it fixes a point in $ \bd X$. 

\item If $G$ does not fix a point in $\bd X$, then there is a non-empty $G$-invariant closed convex subset $Y \subseteq X$ on which $G$ acts minimally. 

\item Assume that $X$ is irreducible, not isometric to the real line $\mathbf R$. If $G$ acts minimally without a fixed point at infinity, then so does every non-trivial normal subgroup. Moreover, for any unbounded closed convex subset $T$ which is mapped at bounded Hausdorff distance from itself by each element of $G$, we must have $X=T$  unless    $\bd T = \varnothing$ and  $T$ is not $\Isom(T)$-minimal. 

\item If $\Isom(X)$ acts minimally, then $X$ admits a unique product decomposition $X = X' \times T$ where $X'$ is boundary-minimal and $T$ has empty boundary. 
\end{enumerate}
\end{propapp}

A closed convex subset $Y$ of a \cat space $X$ is called \textbf{boundary-minimal} if $\bd Z \subsetneq \bd Y$ for every closed convex subset $Z \subsetneq Y$. 

The proof of Proposition~\ref{app:1.8} requires some preparation.

\subsection*{Ultracompletions}
We shall use the \textbf{ultracompletion} of a \cat space $X$, with respect to an ultrafilter $\omega$. It is denoted by $X^\omega$ and defined as the ultralimit  $\lim _{\omega} (X,o)$ of the constant sequence of pointed metric spaces $(X, o)$.  {The ultracompletion is also called \textbf{ultraproduct} or \textbf{ultrapower} by some authors; we prefer the term \emph{ultracompletion}, notably because of its basic properties, collected in the following, which strongly suggest that the space $X^\omega$ is some kind of completion of $X$.} 

\begin{lemapp}\label{app:lem:ultra}
Let $X$ be a \cat space. 
\begin{enumerate}[(i)]
\item $X^{\omega}$ is a complete \cat space in which $X$ embeds  canonically as a convex  subspace. Moreover any isometric action of a group $G$ on $X$ extends to an isometric action of $G$ on $X^{\omega}$.

\item  $X$ is unbounded if and only if $\partial (X^{\omega} ) \neq \varnothing$.

\item If $\omega _1$ and $\omega _2$ are ultrafilters then the iterated ultracompletion  $(X^{\omega _1}) ^{\omega _2}$
is (canonically) isometric to an ultracompletion of $X$ with respect to the ultrafilter $\omega _1 \times \omega_2$.

\item  The telescopic dimensions of $X$ and $X^{\omega}$ coincide.

\item If $X$ has a finite telescopic dimension $d$, then for any ultrafilter $\omega$, the geometric dimension of
$\partial (X^{\omega})$ is at most $d-1$. 
\end{enumerate}
\end{lemapp}
 
\begin{proof}
For (i) and (ii), see Cor.~II.3.10 in \cite{BH}.  {For (iii) see \cite[\S 3.2]{DrutuSapir}.} (iv) is a consequence of Theorem~1.3. Taking ultralimits of \cat spaces does not increase the geometric dimension by Lemma~11.1 from~\cite{Lytchak}; therefore (v) follows from Prop.~2.1. 
\end{proof}

 \medskip

\begin{lemapp} \label{app:nonprod}
Let $X$ be a \cat space of finite telescopic dimension.  Then $X^{\omega}$ does not contain any copy of $X\times I$ with $I$ an 
interval  of positive length.
\end{lemapp}

\begin{proof}
Otherwise, an iterated ultracompletion contains $X\times I^n$. Notice that the $n$-dimensional Euclidean cube of edge length $\sqrt 2 c$ embeds isometrically in the $2n$-dimensional Euclidean cube of edge length $c$. It follows that for $n$ large, the space $X \times I^n$ contains an arbitrary large Euclidean
ball of arbitrary large dimension.  But an iterated ultracompletion is an ultracompletion for another ultrafilter
 $\omega '$ by Lemma~\ref{app:lem:ultra}(iii).  Therefore $X$ cannot have finite telescopic dimension {(by Theorem 1.3)}, a contradiction.
\end{proof}

\begin{lemapp} \label{app:ultra}
Let $G < \Isom(X) $ act minimally on a \cat space $X$ of finite telescopic dimension, with no fixed points in $\partial X$.  Then the extended  action of $G$ on $X^{\omega}$  does not have fixed points in the (larger) boundary $\partial (X^{\omega})$. 
\end{lemapp}

\begin{proof}
Otherwise, the corresponding Busemann function {on $X^\omega$} is quasi-invariant under $G$. By  Lemma~6.2,
it must be constant on $X$. It follows that the product  $X\times [0,\infty )$ embeds isometrically in $X^{\omega}$, contradicting \lref{app:nonprod}.
\end{proof}

\begin{lemapp} \label{app:contains}
Let $G  < \Isom(X)$ act minimally on a complete \cat space $X$ of finite telescopic dimension.  Then any $G$-invariant closed convex  subset of $X^{\omega}$
contains (the canonical) subset $X$ of $X^{\omega}$.
 \end{lemapp}

\begin{proof}
Let $\mathscr C$ be the collection of all $G$-invariant closed convex subspaces of $X^{\omega}$ on which $G$-acts minimally. Then $\mathscr C$ is non-empty since it contains $X$. An easy argument using the Sandwich Lemma (\cite[II.2.12(2)]{BH}) then shows that every $G$-invariant closed convex subspace contains a minimal one, which is thus an element of $\mathscr C$. Moreover, the union $\bigcup \mathscr C$ is a closed convex subspace which splits as a product of the form $X \times T$ (see \cite[Rem.~39]{Monod}). By \lref{app:nonprod}, the cross-section $T$ must be reduced to a single point.  Hence  $\mathscr C = \{X\}$. 
\end{proof}

\subsection*{Evanescent actions}  

\begin{lemapp}\label{app:lem:eva}
 Let  $G  < \Isom(X)$ act minimally on a \cat space $X$ of finite telescopic dimension. 
The $G$-action  is evanescent if and only if $G$ fixes a point in $\partial X$.
\end{lemapp}

\begin{proof}
The `if' direction is clear. For the reverse implication, consider an unbounded  subset $T\subset X$, on which all elements of $G$
have finite displacement functions. Since the displacement functions of isometries are convex, they remain finite on the convex hull of $T$, and we may thus assume that $T$  is convex. Then $G$ fixes all points in $\partial (T^{\omega} ) \subset
\partial X^{\omega}$, which contradicts \lref{app:ultra} since $\partial (T^{\omega} )$ is non-empty by Lemma~\ref{app:lem:ultra}(ii).
\end{proof}

\subsection*{Minimal actions and boundary-minimality}

\begin{lemapp}\label{app:lem:BdMin}
Let $X$ be a complete \cat space of finite telescopic dimension and $Y \subseteq X$ be a closed convex subspace. Assume that the boundary $\bd Y$ has radius~$> \pi/2$. Then $X$ contains some boundary-minimal closed convex subspace $Y'$ with $\bd Y' = \bd Y$. Moreover, the union of all such subspaces splits as a product $Y' \times T$, and  if $\bd Y = \bd X$ then $T$ has empty visual boundary.  
\end{lemapp}

\begin{proof}
Any chain of closed convex subspaces with visual boundary equal to $\bd Y$ has a non-empty intersection, since otherwise $\bd Y$ would have radius~$\leq \pi/2$ by Theorem~1.1. Thus the existence of $Y'$ follows from Zorn's lemma. That the union of all boundary-minimal subspaces with a given boundary   is a product space is true in arbitrary \cat spaces, and can be shown as in \cite[Prop.~3.6]{CM}. 
\end{proof}

\begin{lemapp}\label{app:lem:BdMin2}
Let $X$ be a complete \cat space of finite telescopic dimension, such that $\Isom(X)$   acts minimally. Then either  $\bd X$ has radius~$>\pi/2$, or $\bd X$ is empty.
\end{lemapp}

\begin{proof}
Assume for a contradiction that $\bd X$ is non-empty of radius~$\leq \pi/2$. By Proposition 2.1 and \cite[Prop.~1.4]{BalserLytchak}, the group $G= \Isom(X)$ fixes some point $\xi \in \bd X$, and the ball of radius $\pi/2$ around $\xi$ is the entire boundary. As in  \cite[Prop.~3.11]{CM}, one then shows that $G$ preserves all horoballs centered at $\xi$, contradicting minimality.  
\end{proof}

\begin{lemapp}   \label{app:strange}
Let $X$ be a complete \cat space of finite telescopic dimension {not reduced to a single point}, such that $\Isom(X)$   acts minimally.  Then $\bd(X^{\omega})$ has radius~$>\pi/2$. 

If in addition the ultrafilter $\omega$ is chosen so that the dimension of $ \partial (X^{\omega})$ is maximal (such a choice of $\omega$ exists by Lemma~\ref{app:lem:ultra}(v)), then      there is a unique boundary-minimal
 subspace $X'$ of $X^{\omega}$ with $\bd X' = \bd (X^{\omega})$. Moreover $X'$ contains $X$.
\end{lemapp}

\begin{proof}
If $X$ is bounded, then it is a point by minimality, {which is excluded by hypothesis}. We assume henceforth that $X$ is unbounded, hence  $\bd (X^{\omega})$ is non-empty and of finite geometric dimension by Lemma~\ref{app:lem:ultra}(ii) and (v).

Assume for a contradiction that the radius of  $\bd(X^{\omega})$ is smaller than or equal to $\pi/2$. Then $G = \Isom(X)$ fixes some circumcentre  $\xi$ in $\bd(X^{\omega})$ by \cite[Prop.~1.4]{BalserLytchak}; {the whole boundary $\bd(X^{\omega})$ is thus contained in the ball of radius $\pi/2$ around $\xi$. By \cite[Lem.~3.12]{CM}, this implies that $G$ stabilizes each horoball around $\xi$. We deduce from Lemma~\ref{app:contains} that $X \subset X^\omega$ is contained in the intersection of all these horoballs, which is absurd.   Hence $\bd(X^{\omega})$ has radius~$>\pi/2$.}

By Lemma~\ref{app:lem:BdMin} there is a boundary minimal subspace $X'$ of $X^{\omega}$ with full boundary, and the union   of all such   subspaces   has the form $Y = X' \times T$, by Lemma~\ref{app:lem:BdMin}. 

We now assume that $\omega$ is chosen so that $\dim( \partial (X^{\omega}))$ is maximal.
If $T$ is unbounded, then the ideal boundary of any  ultracompletion of $Y$  contains the spherical join of 
$\partial X' = \partial (X^\omega)$
with a singleton. Such a subspace has larger dimension than $\partial X^{\omega}$, which   contradicts the choice of  $\omega$.

Therefore,  the cross-section $T$ is bounded. Notice that the $G$-action  on $X^{\omega}$ preserves $Y$ and its product decomposition.
Since $T$ is bounded, the induced action of $G$ on $T$ has a fixed point. Thus $G$ preserves some fiber along $X'$ in the product $X' \times T$.
By \lref{app:contains} this fiber must contain $X$, hence  $T$ must be reduced to a single point by \lref{app:nonprod}.
Thus  $Y = X'$ and  $X\subset X'$. 
\end{proof}
 
 \begin{lemapp}\label{app:lem:1.8(iv)}
Let $X$ be a complete \cat space of finite telescopic dimension. If $\Isom(X)$   acts minimally, then $X$ admits a unique product decomposition $X = X' \times T$ where $X'$ is boundary-minimal and $T$ has empty boundary. 
\end{lemapp}

\begin{proof}
If $\bd X$ is non-empty, then it has radius~$>\pi/2$ by Lemma~\ref{app:lem:BdMin2}. We may then invoke Lemma~\ref{app:lem:BdMin}, which provides a canonical product decomposition $X = X' \times T$, where $X'$ is boundary-minimal and $T$ is unbounded and has empty boundary. 

If $\bd X$ is empty, the desired decomposition holds trivially by setting $T= X$ and $X'$ a singleton. 
\end{proof}

\subsection*{Reduced actions}

Following N.~Monod \cite{Monod}, the action of a group $G$ on a \cat space $X$ is called \textbf{reduced} if there is no unbounded closed convex subset $T \subsetneq X$ which is mapped at bounded Hausdorff distance from itself by every element of $G$. 

\begin{lemapp}\label{app:lem:reduced}
Let $G   < \Isom(X)$ act minimally on a  complete \cat space $X$ of finite telescopic dimension, without a fixed point at infinity.
Assume that $X$ is irreducible, and let $T$ be an unbounded closed convex subset, such that
$d(gT,T) <\infty$, for all $g\in G$.  If $\partial T$ is non-empty, or if  $\Isom(T)$ acts minimally on $T$, then $T =X$.
\end{lemapp}

\begin{proof}
If the boundary of $T$ is non-empty, our old proof applies: $\bd T$ is a $G$-invariant closed convex subset of the boundary, and must be of radius~$>\pi/2$ by \cite[Prop.~1.4]{BalserLytchak}. Lemma~\ref{app:lem:BdMin} then shows that the (non-empty) union of all boundary-minimal closed convex subsets with boundary $\bd T$ is a product space, which is $G$-invariant. Since $G$ acts minimally and $X$ is irreducible, it follows that $X$ is boundary-minimal and that $\bd X = \bd T$. In particular $X=T$. 

Assume now that $T$ is $\Isom(T)$-minimal. Every element of $G$ maps the subset $T^{\omega}$ of $X^{\omega}$  at bounded Hausdorff distance from itself.
Hence any element of $G$ preserves the boundary $S =\partial T^{\omega}$, which is non-empty by Lemma~\ref{app:lem:ultra}(ii) and has radius $> \pi /2$ by  \cite[Prop.~1.4]{BalserLytchak} and Lemma~\ref{app:ultra}. We are thus in a position to apply Lemma~\ref{app:lem:BdMin} to the subspace $T^\omega \subseteq X^\omega$. This shows that the (non-empty) union of all boundary-minimal closed convex subsets with boundary $S$ is a product space, which is $G$-invariant. Thus this union has the form $T' \times T''$, where $T'$ is some boundary-minimal subset of $T^{\omega}$. By  \lref{app:strange}, and up to replacing $\omega$ by an ultrafilter making $\dim(\bd T^\omega)$ maximal, the hypothesis that $T$ is $\Isom(T)$-minimal implies that  $T'$ is unique. More importantly, it contains $T$.

Remark  that for any \cat space $X$ and any convex subspace $T$,  the nearest point projection
of $X$ to $T^{\omega}$ inside $X^{\omega}$ has $T$ as its image, due to the very definition of distances in ultracompletions. Thus we have 
$$T \subseteq T' \cap X \subseteq \pi_{T'}(X) \subseteq \pi_{T^\omega}(X) = T,$$
where $\pi_Z$ denotes the nearest point projection to $Z$. It follows that $\pi_{T'}(X) = T$. 

On the other hand, the space  $X$ is contained in the $G$-invariant space $T' \times T''$, whose product decomposition is $G$-invariant. Moreover, on all points of $T' \times T''$, the projection to the first factor $T' \times T'' \to T'$ coincides with the nearest point projection $\pi_{T'}$. We deduce from the preceding paragraph that $X$ is in fact contained in $T \times T''$.  

Let finally $Z'' \subset T''$ be the set of those $z \in T''$ such that the $T$-fiber through $z$ in the product $T \times T''$ is entirely contained in $X$. Since $T \subset X$, the set $Z''$ is non-empty. Moreover $Z''$ is convex (by the Sandwich Lemma) and closed (because $X$ is complete). By construction the product $T \times Z''$ is then a $G$-invariant closed convex subset contained in $X$. By minimality of the $G$-action it coincides with $X$, hence $T= X$ by irreducibility.
\end{proof}

\subsection*{Proof of Proposition~\ref{app:1.8}}

(i) is proved in Lemma~\ref{app:lem:eva}. (ii) follows from Theorem 1.1 and \cite[Prop~1.4]{BalserLytchak}, as in the original argument. (iv) was proved in Lemma~\ref{app:lem:1.8(iv)}. The part of assertion (iii) concerning reduced actions was proved in Lemma~\ref{app:lem:reduced}. It remains to prove the statement on normal subgroups. Given a normal subgroup $N< G$, there are two possibilities: either there is  some non-empty closed convex subset  $Y \subseteq X$ which is $N$-invariant and on which $N$ acts minimally, or there is no such, which implies by (ii) that $N$ fixes a point in $\bd Y$ for  any $N$-invariant closed convex subset $Y \subseteq X$ by (ii). In particular   $\bd Y$ is non-empty. Given this observation, one can proceed with the same argument as in the original proof.
\qed

\subsection*{de Rham decompositions}
In order to fill in  the gap in the proof of  Proposition~6.1, we first record an elementary fact.

\begin{lemapp} \label{app:prodsum}
The  telescopic dimension of a product of two spaces is the sum of the corresponding telescopic dimensions.
\end{lemapp}

\begin{proof}
The statement is true for the geometric dimension.  The product decomposition is stable under 
rescalings and ultralimits.
\end{proof}

The following statement is Proposition 6.1, which we reproduce here for the reader's convenience. 

\begin{propapp}
Let $X$ be a complete \cat space of finite telescopic dimension, such that  $\Isom(X)$  acts minimally on $X$.
Then $X$ has a canonical finite product decomposition $X=\mathbf R^n \times X_1 \times ...\times X_m $, where each $X_i$ is irreducible, unbounded and not isometric to $\mathbf R$.
\end{propapp} 

\begin{proof}
By \lref{app:lem:1.8(iv)}, it is therefore sufficient to prove the proposition for boundary-minimal spaces, and for spaces with an empty boundary. In the former case, we may proceed as in the original argument and get the conclusion. In order to treat the latter case, we may assume that $X$ is unbounded and has empty visual boundary. 

We claim that $X$ does not have  any  bounded  non-trivial factor. Assume the contrary and write $X=X_0 \times C$ with a bounded space $C$.
Then $\partial (X_0 ^{\omega}) =\partial (X ^{\omega})$. Thus $X_0 ^{\omega}$ contains a boundary-minimal subset $X''$ of
$X^\omega$ with full boundary.  For different points $c_1,c_2 \in C$ we obtain disjoint boundary-minimal subsets $X'' \times \{c_i \}$ of $X^{\omega}$, a contradiction.  This proves the claim. In particular any non-trivial factor of $X$ has strictly positive telescopic dimension. 

Due to \lref{app:prodsum}, any product decomposition of $X$ has at most 
$k$ non-trivial factors, where $k$ is the telescopic dimension of $X$.  Thus there is some finite  decomposition of $X$ with irreducible non-trivial factors, which are all unbounded. 
It remains to prove that such a decomposition is canonical.

By Lemma~\ref{app:lem:ultra}(ii) the space $X^{\omega}$ has non-empty visual boundary, and \lref{app:strange} implies (upon changing the ultrafilter $\omega$) that there is a unique boundary-minimal subspace $X'$  of $X^{\omega}$.

The boundary-minimal subset $X'$   of $X^{\omega}$ admits a canonical product decomposition by the first part of the proof.   We next claim that 
$X'$ does not have a Euclidean factor. Otherwise, the projection to a line factor of $X'$  is an affine function $f$.
Since $X$ does not have  Euclidean factors,  the restriction of $f$ to $X$ is constant, due to Proposition 4.2.
Hence $X$ is contained in a non-Euclidean  factor of $X'$ and it follows that $X'$ contains $X\times \mathbf R$, in contradiction to \lref{app:nonprod}.
Therefore, $X'$ has a unique decomposition with irreducible non-Euclidean factors.

Let now $X=X_1 \times X_2$ be a product decomposition. We have seen that $X_1$ and $X_2$ are both unbounded. Thus $\bd X_1^\omega$ and $\bd X_2^\omega$ are both non-empty, and of radius~$>\pi/2$ since otherwise $\bd X^\omega = \bd (X_1^\omega \times X_2^\omega)$ would have radius~$\leq \pi/2$, contradicting Lemma~\ref{app:strange}. 

By Lemma~\ref{app:lem:BdMin}, we can find a boundary-minimal subspace $X'_i$ in $X_i^\omega$ with $\bd X'_i = \bd X_i^\omega$. Then $X_1' \times  X_2 '$ has $\partial X^{\omega}$ as its boundary, hence $X_1 ' \times X_2 '$  contains $X'$.  However, the intersection
of $X'$ with any $X_i ' $-fiber of the decomposition $X_1 '\times X_2 '$ is either empty or the whole $X_i '$-fiber by boundary-minimality. We deduce $X'=  X_1 '\times X_2 '$.

  Let now $X=Y \times \bar Y =Z\times \bar Z$  be two decompositions.  Consider the corresponding decompositions 
$X'= Y' \times  \bar Y ' = Z' \times \bar Z'$ constructed as above.  By the canonicity of the decomposition of $X'$, we infer that  for $x \in X$, the factor $Y'$ is a product of $Y' _x \cap Z' _x$ and $Y'_x  \cap \bar Z' _x$, where the subscript $x$ is used to denote  the corresponding fiber through $x$.
Moreover, the projection $Y' _x \to Y' _x \cap Z'_x$  coincides with the nearest point projection $Y'_x \to Z' _x$ 
(and the same for the other factor).  Hence the image of $Y_x$ under this projections is $Z_x \cap Y_x$ and $\bar Z_ x \cap Y_x$ respectively.
This implies that $Y_x$ splits as a product of $Y_x \cap Z_x $ and $Y_x \cap \bar Z_x$. 
The canonicity   of the decomposition of $X$ follows.
\end{proof}

\subsection*{Final adjustments}

The proof of Corollary 1.9 on superrigidity remains valid, since the weaker notion of reduced actions established in Proposition~\ref{app:1.8}(iii) is sufficient to apply Monod's theorem. Corollary 1.10 asserting that the amenable radical $R$ acts trivially on each non Euclidean factor of $X$ remains valid; indeed, one only needs to discuss the action of $R$  on the `bad' factor $T$ from  \lref{app:lem:1.8(iv)}.  However, since $T$ has an empty boundary, it follows from Theorem~1.6 that  $R$ must stabilize a Euclidean subspace of $T$, which must be a point since $\bd T$ is empty. Since the $G$-action is minimal and $R$ is normal, the $R$-action on $T$ must be trivial, as desired.

\begin{bibdiv}
\begin{biblist}
\bib{BDL}{unpublished}{
 author={Bader, Uri},
 author={Duchesne, Bruno},
 author={L\'ecureux, Jean}, 
 title={Furstenberg maps for CAT(0) targets of finite telescopic dimension},
 note={preprint arXiv:1404.3187},
 year={2014},
}
\bib{BalserLytchak}{article}{
   author={Balser, Andreas},
   author={Lytchak, Alexander},
   title={Centers of convex subsets of buildings},
   journal={Ann. Global Anal. Geom.},
   volume={28},
   date={2005},
   number={2},
   pages={201--209},
}
\bib{BH}{book}{
   author={Bridson, Martin R.},
   author={Haefliger, Andr{\'e}},
   title={Metric spaces of non-positive curvature},
   series={Grundlehren der Mathematischen Wissenschaften [Fundamental
   Principles of Mathematical Sciences]},
   volume={319},
}
\bib{CL}{article}{
   author={Caprace, Pierre-Emmanuel},
   author={Lytchak, Alexander},
   title={At infinity of finite-dimensional CAT(0) spaces},
   journal={Math. Ann.},
   volume={346},
   date={2010},
   number={1},
   pages={1--21},
}
\bib{CM}{article}{
   author={Caprace, Pierre-Emmanuel},
   author={Monod, Nicolas},
   title={Isometry groups of non-positively curved spaces: structure theory},
   journal={J. Topol.},
   volume={2},
   date={2009},
   number={4},
   pages={661--700},
}
\bib{DrutuSapir}{article}{
   author={Dru{\c{t}}u, Cornelia},
   author={Sapir, Mark},
   title={Tree-graded spaces and asymptotic cones of groups},
   note={With an appendix by Denis Osin and Sapir},
   journal={Topology},
   volume={44},
   date={2005},
   number={5},
   pages={959--1058},
}
\bib{Duc}{unpublished}{
   author={Duchesne, Bruno},
   title={Infinite dimensional {R}iemannian symmetric spaces with fixed-sign curvature},
   note={Preprint},
   date={2012},
}
\bib{KS99}{unpublished}{
   author={Korevaar, Nicholas J.},
   author={Schoen, Richard M.},
   title={Global existence theorems for harmonic maps: finite rank spaces and an approach to rigidity for smooth actions},
   note={Unpublished manuscript},
   date={1999},
}
\bib{Lytchak}{article}{
   author={Lytchak, A.},
   title={Rigidity of spherical buildings and joins},
   journal={Geom. Funct. Anal.},
   volume={15},
   date={2005},
   number={3},
   pages={720--752},
}
\bib{Monod}{article}{
   author={Monod, Nicolas},
   title={Superrigidity for irreducible lattices and geometric splitting},
   journal={J. Amer. Math. Soc.},
   volume={19},
   date={2006},
   number={4},
   pages={781--814},
}
\end{biblist}
\end{bibdiv}

\end{document}